\documentclass[12pt]{amsart}
\usepackage{}

\usepackage{amsmath}
\usepackage{amsfonts}
\usepackage{amssymb}
\usepackage[all]{xy}           

\usepackage{bbding}
\usepackage{txfonts}
\usepackage{amscd}

\usepackage[shortlabels]{enumitem}
\usepackage{ifpdf}
\ifpdf
  \usepackage[colorlinks,final,backref=page,hyperindex]{hyperref}
\else
  \usepackage[colorlinks,final,backref=page,hyperindex,hypertex]{hyperref}
\fi
\usepackage{tikz}
\usepackage[active]{srcltx}

\topmargin -.8cm \textheight 22.8cm \oddsidemargin 0cm \evensidemargin -0cm \textwidth 16.3cm

\makeatletter

\newtheorem{thm}{Theorem}[section]
\newtheorem{lem}[thm]{Lemma}
\newtheorem{cor}[thm]{Corollary}
\newtheorem{pro}[thm]{Proposition}
\newtheorem{ex}[thm]{Example}
\newtheorem{rmk}[thm]{Remark}
\newtheorem{defi}[thm]{Definition}

\setlength{\baselineskip}{1.8\baselineskip}

\newcommand {\emptycomment}[1]{}

\newcommand{\lon }{\,\rightarrow\,}
\newcommand{\be }{\begin{equation}}
\newcommand{\ee }{\end{equation}}

\newcommand{\g}{\mathfrak g}


\newcommand{\huaA}{\mathcal{A}}

\newcommand{\huaG}{\mathcal{G}}


\newcommand{\huaO}{{\mathcal{O}}}

\newcommand{\Courant}[1]{\left\llbracket  #1\right\rrbracket }


\newcommand{\Id}{\rm{Id}}

\newcommand{\br}[1]{   [ \cdot,    \cdot  ]   }

\newcommand{\dM}{\mathrm{d}}

\newcommand{\Hom}{\mathrm{Hom}}

\newcommand{\gl}{\mathfrak {gl}}


\begin{document}

\title[deformation map]{Deformation maps of Quasi-twilled associative algebras
}

\author{Shanshan Liu}
\address{College of Big Data and Internet, Shenzhen Technology University, Shenzhen, 518118, China}
\email{liushanshan@sztu.edu.cn}

\author{Abdenacer Makhlouf}
\address{University of Haute Alsace, IRIMAS- D\'epartement  de Math\'ematiques, Universit\'e de Haute Alsace, France}
\email{abdenacer.makhlouf@uha.fr}

\author{Lina Song}
\address{Department of Mathematics, Jilin University, Changchun 130012, Jilin, China}
\email{songln@jlu.edu.cn}


\begin{abstract}
In this paper, we introduce two types of deformation maps of quasi-twilled associative algebras. Each type of deformation maps unify various operators on associative algebras.  Right deformation maps unify
  modified Rota-Baxter operators of weight $\lambda$,   derivations, homomorphisms and crossed homomorphisms. Left deformation maps unify relative Rota-Baxter operators of weight 0,  twisted Rota-Baxter operators,   Reynolds operators   and   deformation maps of  matched pairs of associative algebras. Furthermore, we give the controlling algebra and the cohomology of these two types of deformation maps. On the one hand, we obtain some existing results for modified Rota-Baxter operators of weight $\lambda$, derivations, homomorphisms, crossed homomorphisms,   relative Rota-Baxter operators of weight 0,   twisted Rota-Baxter operators and Reynolds operators. On the other hand, we also obtain some new results, such as the controlling algebra of a modified Rota-Baxter operator of weight $\lambda$ on an associative algebra, the controlling algebra and the cohomology of a deformation map of a matched pair of associative algebras.
\end{abstract}
\keywords{quasi-twilled associative algebra, deformation map, $L_{\infty}$-algebra, Maurer-Cartan element, cohomology }

\maketitle

\tableofcontents

\allowdisplaybreaks


\section{Introduction}
The concept of a formal deformation theory of an algebraic structure began with the seminal work of Gerstenhaber for associative algebras~\cite{Gerstenhaber2}. Subsequently, the theory has been generalized by Nijenhuis and Richardson for Lie algebras~\cite{NR,NR2}. It has been known from Gerstenhaber that the deformation of some algebraic structure is governed by a suitable cohomology theory of the structure. For instance, the deformation of an associative algebra is governed by the classical Hochschild cohomology of the associative algebra \cite{Gerstenhaber1,Hochschild}, while the deformation of a Lie algebra is governed by the Chevalley-Eilenberg cohomology \cite{NR}. More generally, deformation theory for algebras over quadratic operads was developed by Balavoine~\cite{Bal}. For more general operads, we can see the reference \cite{KSo,LV,Ma}.

The cohomology and deformation theories of various operators on associative algebras were established with fruitful applications, such as derivations  \cite{Guo}, crossed homomorphisms \cite{Das3}, associative algebra homomorphisms \cite{Barmeier,BD,Fregier,Gerstenhaber3}, relative Rota-Baxter operators of weight 0 (also called $\huaO$-operators) \cite{Bai1,Bai2,Bai3,Das2,Das4,Uchino},  twisted Rota-Baxter operators and  Reynolds operators  \cite{Das}. By using the method of derived brackets \cite{Kosmann-Schwarzbach,Ma-0,Vo}, one can construct the controlling algebra, namely an algebra whose Maurer-Cartan elements are the given structure. This is the most important step in the above studies. Then twisting the controlling algebra by a Maurer-Cartan element, one can obtain the algebra that governs deformations of the given operator, as well as the coboundary operator in the deformation complex. A solution of the associative Yang-Baxter equation of
weight $\lambda \in \mathbb{K}$ (in short modified AYBE of weight $\lambda$) \cite{Guo1,Guo2} is called a modified Rota-Baxter operator of weight $\lambda$. In \cite{Das1}, the author give the cohomology of modified Rota-Baxter operators of weight $\lambda$ on associative algebras. In \cite{Agore}, the notion of a deformation map of a matched pair of associative algebras was introduced in the study of classifying compliments.

The purpose of the paper is to unify all aforementioned operators. We introduce two types of deformation maps of quasi-twilled associative algebras. Right deformation maps unify
a modified Rota-Baxter operator of weight $\lambda$ on an associative algebra, a derivation on an associative algebra, a crossed homomorphism between associative algebras and an associative algebra homomorphism. Left deformation maps unify a relative Rota-Baxter operator of weight 0 on an associative algebra, a twisted Rota-Baxter operator on an associative algebra, a Reynolds operator on an associative algebra and a deformation map of a matched pair of associative algebras. We give the controlling algebra and the cohomology of these two types of deformation maps and obtain some existing results. We also obtain some new results. We give the controlling algebra of a modified Rota-Baxter operator of weight $\lambda$ on an associative algebra. We also give the controlling algebra and the cohomology of a deformation map of a matched pair of associative algebras.

The paper is organized as follows. In Section \ref{sec:quasi-twilled associatve algebra}, we recall the notion of a quasi-twilled associative algebra and give some examples of it. In Section \ref{sec:deformation map-1}, we introduce the notion of a right deformation map of a quasi-twilled associative algebra and give some examples of it, such as a modified Rota-Baxter operator of weight $\lambda$ on an associative algebra, a derivation on an associative algebra, a crossed homomorphism between associative algebras and an associative algebra homomorphism.  We give the controlling algebra (curved $L_\infty$-algebra) and the cohomology of a right deformation map of a quasi-twilled associative algebra. As a byproduct, we give the controlling algebra of a modified Rota-Baxter operator of weight $\lambda$ on an associative algebra. In Section \ref{sec:deformation map-2},  we introduce the notion of a left deformation map of a quasi-twilled associative algebra and give some examples of it, such as a relative Rota-Baxter operator of weight 0 on an associative algebra, a twisted Rota-Baxter operator on an associative algebra, a Reynolds operator on an associative algebra and a deformation map of a matched pair of associative algebras. We give the controlling algebra (curved $L_\infty$-algebra) and the cohomology of a left deformation map of a quasi-twilled associative algebra. As a byproduct, we give the controlling algebra and the cohomology of a deformation map of a matched pair of associative algebras.
\section{Quasi-twilled associative algebras}\label{sec:quasi-twilled associatve algebra}
In this section, we recall the notion of a quasi-twilled associative algebra and give some examples.
\begin{defi}{\rm(\cite{LJF-1})}
 A {\bf representation} of an associative algebra $(A,\cdot_{A})$ on a vector space $V$ consists of a pair $(\rho,\mu)$, where $\rho,\mu:A\longrightarrow \gl(V)$ are linear maps, such that for all $x,y\in A$:
\begin{equation*}
\rho(x\cdot_{A} y)=\rho(x)\circ\rho(y),\quad \mu(x \cdot_{A} y)=\mu(y)\circ\mu(x),\quad \rho(x)\circ\mu(y)=\mu(y)\circ\rho(x).
\end{equation*}
\end{defi}
We denote a representation of an associative algebra $(A,\cdot_{A})$ by $(V;\rho,\mu)$. Furthermore, let $L,R:A\longrightarrow \gl(A)$ be linear maps, where $L_xy=x
\cdot_{A} y, R_xy=y\cdot_{A} x$. Then $(A;L,R)$ is also a representation, which is called the regular representation.
\begin{defi}{\rm(\cite{Das3})}
 An {\bf associative representation} of an associative algebra $(A,\cdot_{A})$ on an associative algebra $(A',\cdot_{A'})$ consists of a pair $(\rho,\mu)$, where $(A';\rho,\mu)$ is a representation of $(A,\cdot_{A})$ on $(A',\cdot_{A'})$, such that for all $x\in A, u,v\in A'$:
\begin{equation*}
\rho(x)(u\cdot_{A'} v)=\rho(x)(u)\cdot_{A'} v,\quad u\cdot_{A'}\rho(x)(v)=\mu(x)(u)\cdot_{A'} v,\quad u\cdot_{A'}\mu(x)(v)=\mu(x)(u\cdot_{A'} v).
\end{equation*}
\end{defi}

Let $A$ be a vector space. We consider the graded vector space $\oplus_{n=0}^{+\infty}\Hom(\otimes^{n+1}A, A)$ with the degree of elements in $\Hom(\otimes^{n+1}A, A)$ being $n$.
\begin{thm}{\rm(\cite{Gerstenhaber1,Gerstenhaber2})}
The graded vector space $\oplus_{n=0}^{+\infty}\Hom(\otimes^{n+1}A, A)$ equipped with the Gerstanhaber bracket
\begin{equation*}
[f,g]_G=f\circ g-(-1)^{(m-1)(n-1)}g\circ f, \quad
\forall~f\in \Hom(\otimes^mA, A), g \in \Hom(\otimes^nA, A),
\end{equation*}
 is a graded Lie algebra, where $f\circ g \in \Hom(\otimes^{m+n-1}A, A)$ is defined by
\begin{equation}
f\circ g(x_1,\dots,x_{m+n-1})=\sum_{i=1}^m(-1)^{(i-1)(n-1)} f(x_1,\dots,x_{i-1},g(x_i,\dots,x_{i+n-1}),x_{i+n},\dots,x_{m+n-1}),
\end{equation}
for all $x_1,\dots,x_{m+n-1} \in A.$

Furthermore, $(A,\pi)$ is an associative algebra, where $\pi \in \Hom(\otimes^2A, A)$, if and only if $\pi$ is a Maurer-Cartan element of the graded Lie algebra $(\oplus_{n=0}^{+\infty}\Hom(\otimes^{n+1}A, A),[\cdot,\cdot]_G)$, i.e. $[\pi,\pi]_G=0.$
\end{thm}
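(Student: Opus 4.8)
The plan is to establish the three defining properties of a graded Lie algebra — that $[\cdot,\cdot]_G$ respects the grading, is graded antisymmetric, and satisfies the graded Jacobi identity — and then to read off the Maurer-Cartan characterization by a direct computation. Writing $|f|=m-1$ for $f\in\Hom(\otimes^mA,A)$, the composition $f\circ g$ lands in $\Hom(\otimes^{m+n-1}A,A)$, so $|f\circ g|=|f|+|g|$ and hence $[f,g]_G$ has degree $|f|+|g|$, as required. Graded antisymmetry is immediate from the definition: swapping $f$ and $g$ in $[f,g]_G=f\circ g-(-1)^{|f||g|}g\circ f$ yields $[g,f]_G=-(-1)^{|f||g|}[f,g]_G$.

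The essential point is the graded Jacobi identity, and the efficient route is to first show that the composition product makes $\oplus_{n}\Hom(\otimes^{n+1}A,A)$ a graded (right) pre-Lie algebra; that is, that the associator
\[
\mathrm{as}(f,g,h):=(f\circ g)\circ h-f\circ(g\circ h)
\]
is graded symmetric in its last two arguments,
\[
\mathrm{as}(f,g,h)=(-1)^{|g||h|}\,\mathrm{as}(f,h,g).
\]
Once this identity is in hand, the graded Jacobi identity for $[\cdot,\cdot]_G$ follows by a purely formal manipulation valid in any graded pre-Lie algebra: one expands each of the three cyclic terms $[[f,g]_G,h]_G$ via the definition of the bracket and cancels the resulting expressions in pairs using the right-symmetry of the associator.

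To prove the pre-Lie identity I would expand both $(f\circ g)\circ h$ and $f\circ(g\circ h)$ using the defining formula and classify the resulting terms according to how the insertion slots of $g$ and $h$ relate. The terms in which $h$ is inserted into the block of arguments already fed to $g$ (the \emph{nested} terms) are exactly the terms produced by $f\circ(g\circ h)$, and these cancel against the matching nested terms of $(f\circ g)\circ h$; the surviving \emph{disjoint} terms, in which $g$ and $h$ are inserted into distinct arguments of $f$, constitute $\mathrm{as}(f,g,h)$, and interchanging the roles of $g$ and $h$ in these disjoint terms reproduces precisely $(-1)^{|g||h|}\mathrm{as}(f,h,g)$. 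The hard part will be the sign bookkeeping: one must verify that the sign $(-1)^{(i-1)(n-1)}$ accrued by inserting $g$ at slot $i$ and then $h$ at a later slot, together with the reindexing forced by the lengths of $g$ and $h$, combines across the two orderings to give exactly the factor $(-1)^{|g||h|}=(-1)^{(n-1)(p-1)}$, where $|h|=p-1$.

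Finally, for the Maurer-Cartan statement I would specialize to $\pi\in\Hom(\otimes^2A,A)$, where $|\pi|=1$, so that $[\pi,\pi]_G=\pi\circ\pi-(-1)^{1}\pi\circ\pi=2\,\pi\circ\pi$. Evaluating $\pi\circ\pi$ on $(x,y,z)$ gives
\[
\pi\circ\pi(x,y,z)=\pi(\pi(x,y),z)-\pi(x,\pi(y,z)),
\]
so that $[\pi,\pi]_G=0$ if and only if $\pi$ is associative. I expect the only genuine difficulty in the whole argument to be the sign verification in the pre-Lie identity; the antisymmetry, the passage from pre-Lie to Jacobi, and the Maurer-Cartan computation are all routine once that identity is secured.
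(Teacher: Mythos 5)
Your proposal is correct: the degree count, the graded antisymmetry, the reduction of the graded Jacobi identity to the right-symmetry of the associator $(f\circ g)\circ h-f\circ(g\circ h)$ via the nested-versus-disjoint term analysis, and the computation $[\pi,\pi]_G=2\,\pi\circ\pi$ with $\pi\circ\pi(x,y,z)=\pi(\pi(x,y),z)-\pi(x,\pi(y,z))$ are all sound. The paper gives no proof of this statement (it is recalled from Gerstenhaber's original papers), and your argument is precisely the classical pre-Lie route taken in those references, so there is nothing to add beyond the sign bookkeeping you already flag.
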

For all $f\in \Hom(A\otimes A, A), g \in \Hom(A\otimes A, A)$, $x_1, x_2,x_3 \in A$, denote $f\circ g$ by
\begin{equation*}
f\circ g=(f\circ g)_1-(f\circ g)_2,
\end{equation*}
where $(f\circ g)_1(x_1,x_2,x_3)=f(g(x_1,x_2),x_3)$ and $(f\circ g)_2(x_1,x_2,x_3)=f(x_1,g(x_2,x_3))$.

Let $A_1$ and $A_2$ be vector spaces. The elements in $A_1$ are denote by $x$ and the elements in $A_2$ are denoted by $u$. Let $c:\otimes^nA_2\longrightarrow A_1$ be a linear map. We can construct a cochain $\hat{c}\in \Hom(\otimes^n(A_1\oplus A_2),A_1\oplus A_2)$ by
\begin{equation}
\hat{c}((x_1,u_1),\dots,(x_n,u_n)):=(c(u_1,\dots,u_n),0).
\end{equation}
In general, for a given multilinear map $f:A_{i(1)}\otimes A_{i(2)}\otimes\dots \otimes A_{i(n)}\longrightarrow A_j, i(1),\dots, i(n), j\in \{1,2\}$, we define a cochain $\hat{f}\in \Hom(\otimes^n(A_1\oplus A_2),A_1\oplus A_2)$ by
\begin{equation}\hat{f}:=\left\{\begin{array}{ll}
 &f\quad \text{on}~A_{i(1)}\otimes A_{i(2)}\otimes\dots \otimes A_{i(n)},\\
&0\quad \text{all other cases}.
\end{array}\right.
\end{equation}
We call $\hat{f}$ a {\bf lift} of $f$. We denote by $A^{k,l}$ the direct sum of all $(k+l)$-tensor powers of $A_1$ and $A_2$, where $k$ (resp. $l$) is the number of $A_1$ (resp. $A_2$). The tensor space $\otimes^n(A_1\oplus A_2)$ is expanded into the direct sum of $A^{k,l},k+l=n$. Then we have
\begin{equation*}
\Hom(\otimes^n(A_1\oplus A_2),A_1\oplus A_2)\cong (\oplus_{k+l=n}\Hom(A^{k,l},A_1))\oplus (\oplus_{k+l=n}\Hom(A^{k,l},A_2)),
\end{equation*}
where the isomorphism is given by the lift. See \cite{Uchino} for more details. Then $((\oplus_{k+l=n}\Hom(A^{k,l},A_1))\oplus (\oplus_{k+l=n}\Hom(A^{k,l},A_2)),[\cdot,\cdot]_G)$ is a graded Lie algebra, where $[\cdot,\cdot]_G$ is given by
\begin{equation}
[\hat{f},\hat{g}]_G=\widehat{[f,g]}_G.
\end{equation}
In this paper, we omit the notion of lift of a map without ambiguity.
\begin{defi}{\rm(\cite{Uchino})}
Let $(\huaA,\cdot_{\huaA})$ be an associative algebra with a decomposition into two subspaces $\huaA=A\oplus A'$. The triple $(\huaA,A,A')$ is called a {\bf quasi-twilled associative algebra} if $A'$ is a subalgebra of $(\huaA,\cdot_{\huaA})$.
\end{defi}
Let $(\huaA,\cdot_{\huaA})$ be a quasi-twilled associative algebra. Denote the multiplication on $\huaA$ by $\Omega$. Then there exists $\pi \in \Hom(A\otimes A,A)$, $\xi \in \Hom(A\otimes A',A)$, $\eta \in \Hom(A'\otimes A,A)$, $\beta \in \Hom(A'\otimes A',A')$, $\rho \in \Hom(A\otimes A',A')$, $\mu \in \Hom(A'\otimes A,A')$,  $\theta \in \Hom(A\otimes A,A')$, such that
\begin{equation*}
\Omega=\pi+\xi+\eta+\beta+\rho+\mu+\theta.
\end{equation*}
More precisely, for all $x,y \in A, u,v\in A'$, we have
\begin{equation}
\Omega((x,u),(y,v))=(\pi(x,y)+\xi(x,v)+\eta(u,y),\beta(u,v)+\rho(x,v)+\mu(u,y)+\theta(x,y)).
\end{equation}
\begin{rmk}
We don't have a map from $A'\otimes A'$ to $A$ because $A'$ is a subalgebra of $(\huaA,\cdot_{\huaA})$.
\end{rmk}
In fact, $[\Omega,\Omega]_G=0$ if and only if the following equations hold
\begin{equation}\label{associative-structure}
\left\{\begin{array}{ll}
& \frac{1}{2}[\pi,\pi]_G-(\xi\circ \theta)_2+(\eta\circ \theta)_1=0,\\
& (\rho\circ \pi)_1+\frac{1}{2}[\rho,\rho]_G-(\theta\circ\xi)_2+(\beta\circ\theta)_1=0,\\
&-(\mu\circ \pi)_2+\frac{1}{2}[\mu,\mu]_G+(\theta\circ\eta)_1-(\beta\circ\theta)_2=0,\\
&(\pi\circ \xi)_1-(\pi\circ \eta)_2+(\eta\circ \rho)_1-(\xi\circ \mu)_2=0,\\
&-(\pi\circ \xi)_2+(\xi\circ\pi)_1-(\xi\circ\rho)_2=0,\\
&(\pi\circ \eta)_1-(\eta\circ\pi)_2+(\eta\circ\mu)_1=0,\\
&\theta\circ\pi-(\rho\circ\theta)_2+(\mu\circ\theta)_1=0,\\
&[\rho,\mu]_G+(\theta\circ\xi)_1-(\theta\circ\eta)_2=0,\\
&(\rho\circ\xi)_1+(\beta\circ\rho)_1-(\rho\circ\beta)_2=0,\\
&(\rho\circ\eta)_1-(\beta\circ\rho)_2-(\mu\circ\xi)_2+(\beta\circ\mu)_1=0,\\
&-(\mu\circ\eta)_2+(\mu\circ\beta)_1-(\beta\circ\mu)_2=0,\\
&\frac{1}{2}[\xi,\xi]_G-(\xi\circ\beta)_2=0,\\
&[\xi,\eta]_G=0,\\
& \frac{1}{2}[\eta,\eta]_G+(\eta\circ\beta)_1=0,\\
&[\beta,\beta]_G=0,
\end{array}\right.
\end{equation}

In the sequel, we consider various examples of quasi-twilled associative algebras.
\begin{ex}\label{direct-sum}
Let $(A,\cdot_{A})$ be an associative algebra. For all $\lambda \in \mathbb{K}$, define a bilinear operation $``\cdot_M$'' on $A\oplus A$ by
\begin{equation}
(x,u)\cdot_M(y,v)=(x\cdot_{A}v+u\cdot_{A} y,\lambda(x\cdot_{A}y)+u\cdot_{A}v),\quad
 \forall x,y,u,v \in A.
\end{equation}
Then $(A\oplus A,\cdot_M)$ is an associative algebra. Denote this associative algebra by $A\oplus_M A$. Moreover, $(A\oplus_M A,A,A)$ is a quasi-twilled associative algebra.
\end{ex}

\begin{ex}\label{semi-direct product}
Let $(V;\rho,\mu)$ be a representation of an associative algebra $(A,\cdot_{A})$. Define a bilinear operation $``\cdot_{(\rho,\mu)}$'' on $A\oplus V$ by
\begin{equation}
(x,u)\cdot_{(\rho,\mu)}(y,v)=(x\cdot_{A}y,\rho(x)v+\mu(y)u),\quad
 \forall x,y\in A, u,v \in V.
\end{equation}
Then $(A\oplus V,\cdot_{(\rho,\mu)})$ is an associative algebra, which is denoted by $A\ltimes_{(\rho,\mu)}V$ and called the {\bf semi-direct product} of the associative algebra $(A,\cdot_{A})$ and the representation $(V;\rho,\mu)$. Moreover, $(A\ltimes_{(\rho,\mu)}V,A,V)$ is a quasi-twilled associative algebra.
\end{ex}

\begin{ex}\label{semi-direct product-1}
Let $(A';\rho,\mu)$ be an associative representation of an associative algebra $(A,\cdot_{A})$. Define a bilinear operation $``\cdot_{(\rho,\mu)}$'' on $A\oplus A'$ by
\begin{equation}
(x,u)\cdot_{(\rho,\mu)}(y,v)=(x\cdot_{A}y,\rho(x)v+\mu(y)u+u\cdot_{A'}v),\quad
 \forall x,y\in A, u,v \in A'.
\end{equation}
Then $(A\oplus A',\cdot_{(\rho,\mu)})$ is an associative algebra, which is denoted by $A\ltimes_{(\rho,\mu)}A'$ and called the {\bf semi-direct product} of the associative algebra $(A,\cdot_A)$ and the associative representation $(A';\rho,\mu)$. Moreover, $(A\ltimes_{(\rho,\mu)}A',A,A')$ is a quasi-twilled associative algebra.
\end{ex}

\begin{ex}\label{direct product}
Let $(A,\cdot_{A})$ and $(A',\cdot_{A'})$ be associative algebras. Define a bilinear operation $``\cdot_{\oplus}$'' on $A\oplus A'$ by
\begin{equation}
(x,u)\cdot_{\oplus}(y,v)=(x\cdot_{A}y,u\cdot_{A'}v),\quad
 \forall x,y\in A, u,v \in A'.
\end{equation}
Then $(A\oplus A',\cdot_{\oplus})$ is an associative algebra. Moreover, $(A\oplus A',A,A')$ is a quasi-twilled associative algebra.

\end{ex}

\begin{ex}\label{abelian-extension}
Let $(V;\rho,\mu)$ be a representation of an associative algebra $(A,\cdot)$ and $\omega\in \Hom(\otimes^2A,V)$ a $2$-cocycle. Define a bilinear operation $``\cdot_{(\rho,\mu,\omega)}$'' on $A\oplus V$ by
\begin{equation}
(x,u)\cdot_{(\rho,\mu,\omega)}(y,v)=(x\cdot_{A}y,\rho(x)v+\mu(y)u+\omega(x,y)),\quad
 \forall x,y\in A, u,v \in V.
\end{equation}
Then $(A\oplus V,\cdot_{(\rho,\mu,\omega)})$ is an associative algebra. Denote this associative algebra by $A\ltimes_{(\rho,\mu,\omega)}V$. Moreover, $(A\ltimes_{(\rho,\mu,\omega)}V,A,V)$ is a quasi-twilled associative algebra.
\end{ex}
\begin{ex}\label{ex:Reynolds}
As a special case of Example \ref{abelian-extension}, consider the regular representation $(A;L,R)$ and $\omega(x,y)=x\cdot_{A}y$. Then we obtain a quasi-twilled associative algebra $(A\ltimes_{(L,R,\omega)} A,A,A)$.
\end{ex}
\begin{rmk}\label{rmk-nex}
 In fact, the above examples can be unified via extensions of associative algebras. Recall that an associative algebra  $(\huaA,\cdot_{\huaA})$ is an extension of an associative algebra $(A,\cdot_{A})$ by an associative algebra $(A',\cdot_{A'})$ if we have the following exact sequence:
\begin{equation}\label{seq:nonabelianext}
0\longrightarrow A'\stackrel{i}{\longrightarrow}\huaA\stackrel{\pi}{\longrightarrow} A\longrightarrow 0.
\end{equation}
Choosing a section $s:A\lon\huaA$, then $\huaA$ is equal to $s(A)\oplus i(A')$ and $i(A')$ is a subalgebra. Thus, $(\huaA, s(A), i(A'))$ is a quasi-twilled associative algebra.
\end{rmk}
\begin{ex}\label{matched-pair}
A {\bf matched pair of associative algebras} $(A,A',\rho,\mu,\eta,\xi)$, simply $(A,A')$, consists of two associative algebras $(A,\cdot_{A})$ and $(A',\cdot_{A'})$, together with linear maps $\rho,\mu:A\longrightarrow \gl(A')$ and $\eta,\xi:A'\longrightarrow \gl(A)$ such that $(A';\rho,\mu)$ and $(A;\eta,\xi)$ are representations, and for all $x,y\in A, u,v\in A'$, satisfying the following conditions:
\begin{eqnarray*}
\rho(x)(u\cdot_{A'}v)&=&\rho(\xi(u)x)v+(\rho(x)u)\cdot_{A'}v,\\
\mu(x)(u\cdot_{A'}v)&=&\mu(\eta(v)x)u+u\cdot_{A'}(\mu(x)v),\\
\eta(u)(x\cdot_{A}y)&=&\eta(\mu(x)u)y+(\eta(u)x)\cdot_{A}y,\\
\xi(u)(x\cdot_{A}y)&=&\xi(\rho(y)u)x+x\cdot_{A}(\xi(u)y),\\
\rho(\eta(u)x)v+(\mu(x)u)\cdot_{A'} v&=&\mu(\xi(v)x)u+u\cdot_{A'}(\rho(x)v),\\
\eta(\rho(x)u)y+(\xi(u)x)\cdot_{A} y&=&\xi(\mu(y)u)x+x\cdot_{A}(\eta(u)y).
\end{eqnarray*}
We define a bilinear operation $\cdot_{\bowtie}:\otimes^2(A\oplus A')\longrightarrow A\oplus A'$ by
\begin{equation}
(x,u)\cdot_{\bowtie}(y,v)=(x\cdot_{A}y+\xi(v)x+\eta(u)y,u\cdot_{A'}v+\rho(x)v+\mu(y)u),\quad
 \forall x,y\in A, u,v \in A'.
\end{equation}
Let $(A,A',\rho,\mu,\eta,\xi)$ be a matched pair of associative algebras, then $(A\oplus A',\cdot_{\bowtie})$ is an associative algebra. Denote this associative algebra by $A\bowtie A'$. Moreover, $(A\bowtie A',A,A')$ is a quasi-twilled associative algebra.
\end{ex}
In next two sections, $(\huaA,A,A')$ is always a quasi-twilled associative algebra and the multiplication on $\huaA$ is denoted by
\begin{equation*}
\Omega=\pi+\xi+\eta+\beta+\rho+\mu+\theta,
\end{equation*}
where $\pi \in \Hom(A\otimes A,A)$, $\xi \in \Hom(A\otimes A',A)$, $\eta \in \Hom(A'\otimes A,A)$, $\beta \in \Hom(A'\otimes A',A')$, $\rho \in \Hom(A\otimes A',A')$, $\mu \in \Hom(A'\otimes A,A')$, $\theta \in \Hom(A\otimes A,A')$.

\section{The controlling algebra and cohomology of right deformation maps of quasi-twilled associative algebras}\label{sec:deformation map-1}

\subsection{Right deformation maps of quasi-twilled associative algebras}
\
\newline
\indent
In this subsection, we introduce the notion of a right deformation map of a quasi-twilled associative algebra and give some examples, such as a modified Rota-Baxter operator of weight $\lambda$ on an associative algebra, a derivation on an associative algebra, a crossed homomorphism between associative algebras and an associative algebra homomorphism.
\begin{defi}
Let $(\huaA,A,A')$ be a quasi-twilled associative algebra. A {\bf right deformation map} of $(\huaA,A,A')$ is a linear map $D:A\longrightarrow A'$ such that for all $x,y\in A$
\begin{equation}\label{deformation-map}
D(\pi(x,y)+\xi(x,D(y))+\eta(D(x),y))=\rho(x,D(y))+\mu(D(x),y)+\beta(D(x),D(y))+\theta(x,y).
\end{equation}
\end{defi}
Let $(\huaA,A,A')$ be a quasi-twilled associative algebra and $D:A\longrightarrow A'$ a linear map. Denote the graph of $D$ by
\begin{equation*}
 \mathrm{Gr}(D)=\{(x,D(x))|x\in A\}.
\end{equation*}
\begin{pro}
With the above notation, $D$ is a right deformation map of $(\huaA,A,A')$ if and only if $\mathrm{Gr}(D)$ is a subalgebra of $\huaA$. Furthermore, $(\mathrm{Gr}(D),A')$ is a matched pair of associative algebras.
\end{pro}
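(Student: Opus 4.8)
The plan is to handle the two assertions separately. The first is a direct unwinding of definitions; the second reduces to the general principle that an associative algebra which splits as a vector-space direct sum of two subalgebras is automatically a matched pair, with the actions read off from the mixed products.

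For the first assertion I would simply compute the product of two elements of $\mathrm{Gr}(D)$ inside $\huaA$. Using the expansion $\Omega=\pi+\xi+\eta+\beta+\rho+\mu+\theta$ together with the product formula for $\Omega$, one obtains for $x,y\in A$
\[
\Omega\big((x,D(x)),(y,D(y))\big)=\Big(\pi(x,y)+\xi(x,D(y))+\eta(D(x),y),\ \beta(D(x),D(y))+\rho(x,D(y))+\mu(D(x),y)+\theta(x,y)\Big).
\]
This element lies in $\mathrm{Gr}(D)$ exactly when its $A'$-component equals $D$ applied to its $A$-component, and that condition is verbatim the defining equation \eqref{deformation-map}. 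Hence $\mathrm{Gr}(D)$ is closed under $\Omega$, i.e.\ is a subalgebra, if and only if $D$ is a right deformation map. I expect no obstacle here.

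For the second assertion I would first record that $\huaA=\mathrm{Gr}(D)\oplus A'$ as vector spaces, since each $(x,u)$ decomposes uniquely as $(x,D(x))+(0,u-D(x))$. By the first part $\mathrm{Gr}(D)$ is a subalgebra, and $A'$ is a subalgebra by the definition of a quasi-twilled algebra, so $\huaA$ is the direct sum of two subalgebras. The strategy is then to define the four matched-pair actions as the projections of the mixed products onto the two summands. Identifying $\mathrm{Gr}(D)$ with $A$ via $x\mapsto(x,D(x))$, the products $(x,D(x))\cdot(0,v)$ and $(0,v)\cdot(x,D(x))$, projected along $\mathrm{Gr}(D)\oplus A'$, give the representation $\eta_D,\xi_D$ of $A'$ on $\mathrm{Gr}(D)$ with $\xi_D(v)x=\xi(x,v)$ and $\eta_D(v)x=\eta(v,x)$, and the representation $\rho_D,\mu_D$ of $\mathrm{Gr}(D)$ on $A'$ with $\rho_D(x)v=\rho(x,v)+\beta(D(x),v)-D(\xi(x,v))$ and $\mu_D(x)v=\mu(v,x)+\beta(v,D(x))-D(\eta(v,x))$; the product on $\mathrm{Gr}(D)$ is $x\cdot_D y=\pi(x,y)+\xi(x,D(y))+\eta(D(x),y)$ and that on $A'$ is $\beta$.

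It then remains to verify the matched-pair axioms of Example \ref{matched-pair} and that $\huaA\cong\mathrm{Gr}(D)\bowtie A'$. I would invoke the standard observation that, for a direct-sum decomposition into two subalgebras, the associativity of $\huaA$ splits — according to which summand each of three factors lies in — into exactly the twelve matched-pair conditions (associativity of each factor, the representation axioms for $\rho_D,\mu_D$ and for $\eta_D,\xi_D$, and the six compatibility identities). Since $\huaA$ is genuinely associative (equivalently, \eqref{associative-structure} holds), these conditions are satisfied automatically, and the isomorphism $\mathrm{Gr}(D)\bowtie A'\to\huaA$, $((x,D(x)),v)\mapsto(x,D(x)+v)$, is by construction an algebra map. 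The main obstacle is thus purely bookkeeping: carefully projecting each of the six mixed associativity relations onto the two summands and matching the resulting identities with the listed matched-pair conditions, keeping track of the correction terms in $D$ and $\beta$ that arise because $\mathrm{Gr}(D)$ is a twisted copy of $A$ rather than $A$ itself.
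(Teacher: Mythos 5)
Your proposal is correct and follows essentially the same route as the paper: the first assertion is proved by the identical computation of $\Omega((x,D(x)),(y,D(y)))$ and comparison of its $A'$-component with $D$ applied to its $A$-component, and for the second assertion the paper simply notes $\huaA=\mathrm{Gr}(D)\oplus A'$ and invokes the same direct-sum-of-two-subalgebras principle you describe. Your explicit formulas for the induced actions and the splitting of associativity into the matched-pair axioms are a fleshed-out version of the paper's one-line justification.
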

\begin{proof}
If $D$ is a right deformation map of $(\huaA,A,A')$, for all $(x,D(x)),(y,D(y)) \in \mathrm{Gr}(D)$, by \eqref{deformation-map}, we have
\begin{eqnarray*}&&\Omega((x,D(x)),(y,D(y)))\\
&=&(\pi(x,y)+\xi(x,D(y))+\eta(D(x),y),\rho(x,D(y))+\mu(D(x),y)+\beta(D(x),D(y))+\theta(x,y))\\
&=&(\pi(x,y)+\xi(x,D(y))+\eta(D(x),y),D(\pi(x,y)+\xi(x,D(y))+\eta(D(x),y))),
\end{eqnarray*}
which implies that $\Omega((x,D(x)),(y,D(y)))\in \mathrm{Gr}(D)$. Thus, $\mathrm{Gr}(D)$ is a subalgebra of $\huaA$. The converse part can be proved similarly. We omit details.

Since $\huaA=\mathrm{Gr}(D)\oplus A'$, thus, $(\mathrm{Gr}(D),A')$ is a matched pair of associative algebras.
\end{proof}

\begin{ex}
(modified Rota-Baxter operator) Consider the quasi-twilled associative algebra $(A\oplus_MA,A,A)$ given in Example \ref{direct-sum}. In this case, 
a right deformation map of $(A\oplus_MA,A,A)$ is a linear map $D:A\longrightarrow A$ such that
\begin{equation}
D(x)\cdot_{A}D(y)-D(x\cdot_{A}D(y))-D(D(x)\cdot_{A} y)=-\lambda(x\cdot_{A} y),\quad
 \forall x,y\in A,
\end{equation}
which implies that $D$ is a {\bf modified Rota-Baxter operator of weight $\lambda$} on the associative algebra $(A,\cdot_{A})$. See \cite{Das1,Guo1,Guo2} for more details.
\end{ex}

\begin{ex}
(derivation) Consider the quasi-twilled associative algebra $(A\ltimes_{(\rho,\mu)}V,A,V)$ given in Example \ref{semi-direct product}. In this case, a right deformation map of $(A\ltimes_{(\rho,\mu)}V,A,V)$ is a linear map $D:A\longrightarrow V$ such that
\begin{equation}
D(x\cdot_{A}y)=\rho(x)D(y)+\mu(y)D(x),\quad
 \forall x,y\in A,
\end{equation}
which implies that $D$ is a {\bf derivation} from the associative algebra $(A,\cdot_{A})$ to $V$. In particular, if $(V;\rho,\mu)$ is a regular representation, then we obtain the usual derivation. See \cite{Loday} for more details.
\end{ex}

\begin{ex}
(crossed homomorphism) Consider the quasi-twilled associative algebra $(A\ltimes_{(\rho,\mu)}A',A,A')$ given in Example \ref{semi-direct product-1}. In this case, a right deformation map of $(A\ltimes_{(\rho,\mu)}A',A,A')$ is a linear map $D:A\longrightarrow A'$ such that
\begin{equation}
D(x\cdot_{A}y)=\rho(x)D(y)+\mu(y)D(x)+D(x)\cdot_{A'}D(y),\quad
 \forall x,y\in A,
\end{equation}
which implies that $D$ is a {\bf crossed homomorphism} from the associative algebra $(A,\cdot_{A})$ to the associative algebra $(A',\cdot_{A'})$. See \cite{Das3} for more details.
\end{ex}

\begin{ex}
(algebra homomorphism) Consider the quasi-twilled associative algebra $(A\oplus A',A,A')$ given in Example \ref{direct product}. In this case, a right deformation map of $(A\oplus A',A,A')$ is a linear map $D:A\longrightarrow A'$ such that
\begin{equation}
D(x\cdot_{A}y)=D(x)\cdot_{A'} D(y),\quad
 \forall x,y\in A,
\end{equation}
which implies that $D$ is an {\bf associative algebra homomorphism} from $(A,\cdot_{A})$ to $(A',\cdot_{A'})$.
\end{ex}


Let $D:A\longrightarrow A'$ be a linear map. We set
\begin{equation*}
 e^{X_D}(\cdot):={\Id}+X_D+\frac{1}{2!}X_D^2+\frac{1}{3!}X_D^3+\dots,
\end{equation*}
where $X_D=[\cdot,D]_G$, $X_D^2:=[[\cdot,D]_G,D]_G$ and $X_D^n$ is defined by the same manner.
\begin{defi}{\rm(\cite{Uchino})}
 The transformation $\Omega^D\triangleq e^{X_D}(\Omega)$ is called a {\bf twisting} of $\Omega$ by $D$.
\end{defi}

\begin{pro}\label{pro:twisting-ass}{\rm(\cite{Uchino})}
Let $(\huaA,A,A')$ be a quasi-twilled associative algebra and $D:A\longrightarrow A'$ a linear map. Then
\begin{equation*}
\Omega^D=e^{-D}\circ \Omega\circ (e^D\otimes e^D)
\end{equation*}
is an associative algebra structure on $\huaA$ and $e^D$ is an associative algebra isomorphism from $(\huaA,\Omega^D)$ to $(\huaA,\Omega)$. Furthermore, $((\huaA,\Omega^D),A,A')$ is a quasi-twilled associative algebra.
\end{pro}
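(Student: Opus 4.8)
\emph{The plan.} The structural fact that makes everything terminate is that $D$, extended to $\huaA=A\oplus A'$ by $D|_{A'}=0$, is square-zero: since $\Img D\subseteq A'\subseteq\Ker D$, we have $D\circ D=0$. Consequently the exponential collapses to $e^{D}=\Id+D$ and $e^{-D}=\Id-D$, and these are mutually inverse linear automorphisms of $\huaA$, as $(\Id+D)(\Id-D)=\Id-D\circ D=\Id$. I would record this at the outset, since it both defines $e^{\pm D}$ and guarantees the finiteness used below.

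First I would establish the displayed identity $\Omega^D=e^{-D}\circ\Omega\circ(e^D\otimes e^D)$. The general principle is that, for a degree-$0$ element $D$ of the Gerstenhaber algebra (a linear endomorphism of $\huaA$), the inner derivation $X_D=[\cdot,D]_G$ decomposes, on a multilinear map $\mu$, into a part that inserts $D$ into the arguments of $\mu$ and a part $\mu\mapsto -D\circ\mu$ that post-composes with $-D$; these two operations act on disjoint ends of $\mu$ and hence commute. Exponentiating the insertion part precomposes $\mu$ with $e^D$ in every slot, while exponentiating the post-composition part produces $e^{-D}\circ\mu$; together these give the asserted conjugation. Because $D\circ D=0$ and $\Omega$ is bilinear, the series $e^{X_D}(\Omega)=\Omega+X_D(\Omega)+\tfrac12 X_D^2(\Omega)+\tfrac16 X_D^3(\Omega)$ terminates (the fourth iterated bracket already vanishes), so a finite direct expansion of both sides verifies the identity with no convergence concern; alternatively one may invoke the corresponding statement in \cite{Uchino}.

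With the formula in hand, associativity and the isomorphism property follow together and cheaply, because transporting an associative multiplication through a linear isomorphism again yields an associative multiplication. Concretely, from $\Omega^D=(e^D)^{-1}\circ\Omega\circ(e^D\otimes e^D)$ one reads off $e^D(\Omega^D(a,b))=\Omega(e^D a,e^D b)$ for all $a,b\in\huaA$, so the bijection $e^D$ is an algebra homomorphism, hence an isomorphism, from $(\huaA,\Omega^D)$ to $(\huaA,\Omega)$; and associativity of $\Omega^D$ is pulled back from that of $\Omega$ along this bijection. Equivalently, since $e^{X_D}$ is an automorphism of the graded Lie algebra it preserves Maurer--Cartan elements, whence $[\Omega^D,\Omega^D]_G=e^{X_D}[\Omega,\Omega]_G=0$.

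It remains to check that $A'$ is still a subalgebra for $\Omega^D$ on the same decomposition $\huaA=A\oplus A'$, which is the quasi-twilled structure. For $u,v\in A'$ we have $e^D u=u$ and $e^D v=v$ because $D|_{A'}=0$, while $\Omega(u,v)=\beta(u,v)\in A'$ since $A'$ is a subalgebra of $(\huaA,\Omega)$; applying $e^{-D}=\Id-D$ and using $D|_{A'}=0$ once more gives $\Omega^D(u,v)=\beta(u,v)\in A'$. Hence $((\huaA,\Omega^D),A,A')$ is a quasi-twilled associative algebra. I expect the one genuinely delicate step to be the second paragraph, matching the algebraic twisting $e^{X_D}(\Omega)$ with the geometric conjugation: the sign bookkeeping of the iterated Gerstenhaber brackets is the main thing to get right, and it is precisely the square-zero property $D\circ D=0$ that keeps the computation finite and tractable.
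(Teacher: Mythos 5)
Your proof is correct. Note that the paper itself offers no proof of this proposition---it is imported verbatim from \cite{Uchino}---so there is nothing internal to compare against; your write-up supplies a complete, self-contained verification along what is essentially the standard (Uchino's) route. The three ingredients you isolate are exactly the right ones: the lift of $D$ to $\huaA$ is square-zero (image in $A'$, kernel containing $A'$), so $e^{\pm D}=\Id\pm D$ are mutually inverse and all series terminate; the operator $X_D=[\cdot,D]_G$ splits into commuting insertion and post-composition parts, which exponentiate to $\Omega\mapsto e^{-D}\circ\Omega\circ(e^D\otimes e^D)$---and since $D$ has degree $0$ in the Gerstenhaber grading, the bracket carries no signs, $X_D(\Omega)=\Omega\circ(D\otimes\Id)+\Omega\circ(\Id\otimes D)-D\circ\Omega$, so the bookkeeping you flag as delicate is in fact harmless; and transport of structure along the bijection $e^D$ gives associativity and the isomorphism simultaneously, while $D|_{A'}=0$ together with $\Omega(u,v)=\beta(u,v)\in A'$ gives $\Omega^D(u,v)=\beta(u,v)$, preserving the quasi-twilled property.
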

Let $(\huaA,A,A')$ be a quasi-twilled associative algebra and $D:A\longrightarrow A'$  a linear map. Write
\begin{equation*}
\Omega^D=\pi^D+\xi^D+\eta^D+\beta^D+\rho^D+\mu^D+\theta^D,
\end{equation*}
where $\pi^D \in \Hom(A\otimes A,A)$, $\xi^D \in \Hom(A\otimes A',A)$, $\eta^D \in \Hom(A'\otimes A,A)$, $\beta^D \in \Hom(A'\otimes A',A')$, $\rho^D \in \Hom(A\otimes A',A')$, $\mu^D \in \Hom(A'\otimes A,A')$, $\theta^D \in \Hom(A\otimes A,A')$. Then we have the following result:
\begin{thm}\label{twisting-ass-deformation-1}
With the above notation, for all $x,y\in A, u,v\in A'$, we have
\begin{eqnarray}
\label{def-1}\pi^D(x,y)&=&\pi(x,y)+\eta(D(x),y)+\xi(x,D(y)),\\
\label{def-4}\xi^D(x,u)&=&\xi(x,u),\\
\label{def-5}\eta^D(u,x)&=&\eta(u,x),\\
\label{def-6}\beta^D(u,v)&=&\beta(u,v),\\
\label{def-2}\rho^D(x,u)&=&\rho(x,u)+\beta(D(x),u)-D(\xi(x,u)),\\
\label{def-3}\mu^D(u,x)&=&\mu(u,x)+\beta(u,D(x))-D(\eta(u,x)),\\
\label{def-7}\theta^D(x,y)&=&\theta(x,y)+\mu(D(x),y)+\rho(x,D(y))-D(\pi(x,y))\\
\nonumber &&+\beta(D(x),D(y))-D(\eta(D(x),y))-D(\xi(x,D(y))).
\end{eqnarray}
Furthermore, $D:A\longrightarrow A'$ is a right deformation map of $(\huaA,A,A')$ if and only if $(A,\pi^D)$ and $(A',\beta)$ are associative algebra, thus, they constitute a matched pair.
\end{thm}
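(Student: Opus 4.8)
The plan is to prove the two assertions in turn, relying throughout on the closed formula $\Omega^D = e^{-D}\circ\Omega\circ(e^D\otimes e^D)$ from Proposition \ref{pro:twisting-ass}. Since $D(A)\subseteq A'$ and $D$ kills $A'$, the lift $\hat D$ satisfies $\hat D^2=0$, so $e^D=\Id+D$ and $e^{-D}=\Id-D$ are honest finite sums. First I would evaluate $\Omega^D$ on an arbitrary pair $((x,u),(y,v))$. Applying $e^D$ replaces the $A'$-slots $u,v$ by $u+D(x)$ and $v+D(y)$; feeding this into $\Omega$ and expanding by bilinearity produces a sum of terms each lying in a definite bidegree $A^{k,l}$, and finally $e^{-D}=\Id-D$ subtracts $D$ of the $A$-valued component from the $A'$-valued component. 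The crucial bookkeeping remark is that the correction $-D(\cdots)$ is always $A'$-valued, so it never touches the $A$-valued outputs $\pi^D,\xi^D,\eta^D$; those are read off directly from $\Omega\circ(e^D\otimes e^D)$. Because $\xi,\eta$ each involve a single $A'$-input, collecting the genuinely mixed terms leaves $\xi^D=\xi$ and $\eta^D=\eta$ (the stray $D(x),D(y)$ contributions migrate into the $A\otimes A\to A$ part, giving \eqref{def-1}), which yields \eqref{def-4}, \eqref{def-5}. The $A'$-valued components $\beta^D,\rho^D,\mu^D,\theta^D$ are obtained by sorting, according to the number of $A'$-inputs, the terms $\beta(D(x),-),\beta(-,D(y)),\rho(x,D(y)),\mu(D(x),y),\beta(D(x),D(y))$ coming from $e^D$ together with the corrections $-D(\cdots)$; this gives \eqref{def-6}, \eqref{def-2}, \eqref{def-3} and \eqref{def-7}.

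For the ``furthermore'' part, I would first observe that the defining equation \eqref{deformation-map} of a right deformation map is, after transposing sides, literally the vanishing of $\theta^D$. Indeed, comparing with \eqref{def-7}, the condition $\theta^D(x,y)=0$ reads $D\big(\pi(x,y)+\xi(x,D(y))+\eta(D(x),y)\big)=\theta(x,y)+\mu(D(x),y)+\rho(x,D(y))+\beta(D(x),D(y))$, which is exactly \eqref{deformation-map}. Hence $D$ is a right deformation map if and only if $\theta^D=0$. Now by Proposition \ref{pro:twisting-ass} the twisting $(\huaA,\Omega^D)$ is again an associative algebra and $((\huaA,\Omega^D),A,A')$ is quasi-twilled, so $A'$ stays a subalgebra; equivalently $[\beta^D,\beta^D]_G=[\beta,\beta]_G=0$ (the last equation of \eqref{associative-structure} in the twisted variables), i.e. $(A',\beta)$ is always an associative algebra. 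The condition $\theta^D=0$ says precisely that the product of two elements of $A$ remains in $A$, that is, that $A$ with multiplication $\pi^D$ is a subalgebra of the associative algebra $(\huaA,\Omega^D)$, hence an associative algebra; alternatively this drops out of the first equation of \eqref{associative-structure}, namely $\tfrac12[\pi^D,\pi^D]_G=(\xi^D\circ\theta^D)_2-(\eta^D\circ\theta^D)_1$, which forces $[\pi^D,\pi^D]_G=0$ once $\theta^D=0$. Conversely, if $(A,\pi^D)$ is an associative subalgebra then $\theta^D=0$, so $D$ is a right deformation map. Finally, when $\theta^D=0$ both $A$ and $A'$ are subalgebras of $(\huaA,\Omega^D)$ with $\huaA=A\oplus A'$, so, exactly as in the preceding proposition, the complementary-subalgebra decomposition exhibits $(A,A')$ as a matched pair.

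The routine-but-delicate part will be the bidegree bookkeeping of the first step: one must track carefully which of the $D$-corrections land in which of the seven components, in particular verifying that $\beta^D=\beta$ because the $A$-valued expression fed to the correction $-D(\cdots)$ contains no term with both an $A'$-input $u$ and an $A'$-input $v$. The only conceptual subtlety in the second step is to read ``$(A,\pi^D)$ is an associative algebra'' as ``$A$ is a subalgebra of $(\huaA,\Omega^D)$'', i.e. as the condition $\theta^D=0$; this is the precise hinge on which both implications of the stated equivalence turn, after which the matched-pair conclusion is immediate.
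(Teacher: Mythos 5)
Your proposal is correct, and for the component formulas and the forward implication it follows essentially the same route as the paper: expand $\Omega^D=e^{-D}\circ \Omega\circ (e^D\otimes e^D)$ (the paper writes out exactly this expansion term by term, implicitly using that the lift of $D$ squares to zero, so $e^{\pm D}=\Id\pm D$), sort the result by bidegree to obtain \eqref{def-1}--\eqref{def-7}, and then combine $\theta^D=0$ with the structure equations \eqref{associative-structure} applied to $\Omega^D$ to conclude $[\pi^D,\pi^D]_G=0$ and $[\beta^D,\beta^D]_G=0$. The only cosmetic difference in this part is that you evaluate on a general pair $((x,u),(y,v))$ at once, whereas the paper evaluates on pairs of each bidegree separately.

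Where you genuinely depart from the paper is the converse of the ``furthermore'' statement, which the paper dismisses with ``the converse part can be proved similarly, we omit details.'' Your move of reading ``$(A,\pi^D)$ is an associative algebra'' as ``$A$ is a subalgebra of $(\huaA,\Omega^D)$'', i.e.\ as $\theta^D=0$, is not merely a convenient hinge: it is indispensable, because the literal converse fails. Indeed, in the quasi-twilled algebra $(A\ltimes_{(\rho,\mu,\omega)}V,A,V)$ of Example \ref{abelian-extension} one has $\xi=\eta=0$ and $\beta=0$, so $\pi^D=\pi$ and $(A',\beta)=(V,0)$ are associative (and even constitute a matched pair) for \emph{every} linear map $D:A\longrightarrow V$; yet $D$ is a right deformation map only if $D(x\cdot_{A}y)=\rho(x)D(y)+\mu(y)D(x)+\omega(x,y)$, which says precisely that $\omega$ is a Hochschild coboundary and hence has no solution when the $2$-cocycle $\omega$ is nontrivial in cohomology. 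So associativity of $\pi^D$ alone cannot force $\theta^D=0$, and the equivalence must be read, as you do, through the condition that $A$ and $A'$ are complementary subalgebras of $(\huaA,\Omega^D)$. Your explicit flagging and use of that reading supplies exactly the justification that the paper's own proof leaves out.
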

\begin{proof}
For all $x,y\in A, u,v\in A'$, we have
\begin{eqnarray*}
(\pi^D(x,y),\theta^D(x,y))&=&\Omega^D((x,0),(y,0))\\
&=&\Omega((x,0),(y,0))+\Omega((0,D(x)),(y,0))+\Omega((x,0),(0,D(y)))\\
&&-D(\Omega((x,0),(y,0)))+\Omega((0,D(x)),(0,D(y)))-D(\Omega((0,D(x)),(y,0)))\\
&&-D(\Omega((x,0),(0,D(y))))-D(\Omega((0,D(x)),(0,D(y))))\\
&=&(\pi(x,y),\theta(x,y))+(\eta(D(x),y),\mu(D(x),y))+(\xi(x,D(y)),\rho(x,D(y)))\\
&&-(0,D(\pi(x,y)))+(0,\beta(D(x),D(y)))-(0,D(\eta(D(x),y)))-(0,D(\xi(x,D(y)))\\
&=&(\pi(x,y)+\eta(D(x),y)+\xi(x,D(y)),\theta(x,y)+\mu(D(x),y)+\rho(x,D(y))\\
&&-D(\pi(x,y))+\beta(D(x),D(y))-D(\eta(D(x),y))-D(\xi(x,D(y))),
\end{eqnarray*}
which implies that
\begin{eqnarray*}
\pi^D(x,y)&=&\pi(x,y)+\eta(D(x),y)+\xi(x,D(y)),\\
\theta^D(x,y)&=&\theta(x,y)+\mu(D(x),y)+\rho(x,D(y))-D(\pi(x,y))\\
&&+\beta(D(x),D(y))-D(\eta(D(x),y))-D(\xi(x,D(y))).
\end{eqnarray*}
Similarly, we have
\begin{eqnarray*}
\xi^D(x,u)&=&\xi(x,u),\\
\eta^D(u,x)&=&\eta(u,x),\\
\beta^D(u,v)&=&\beta(u,v)\\
\rho^D(x,u)&=&\rho(x,u)+\beta(D(x),u)-D(\xi(x,u)),\\
\mu^D(u,x)&=&\mu(u,x)+\beta(u,D(x))-D(\eta(u,x)).
\end{eqnarray*}
If $D$ is a right deformation map of $(\huaA,A,A')$, then we have $\theta^D=0$.  Since $\Omega^D$ is an associative algebra structure on $\huaG$, we have $[\Omega^D,\Omega^D]_G=0$.  Thus, by \eqref{associative-structure}, we have $[\pi^D,\pi^D]_G=0$ and $[\beta^D,\beta^D]_G=0$, which implies that $(A,\pi^D)$ and $(A',\beta)$ are associative algebras. Thus, $(A,\pi^D)$ and $(A',\beta)$ constitutes a matched pair. The converse part can be proved similarly. We omit details. This finishes the proof.
\end{proof}
\subsection{Controlling algebras of right deformation maps of quasi-twilled associative algebras}
\
\newline
\indent
In this subsection, first we recall the notion of a curved $L_\infty$-algebra and a curved $V$-data. Then we construct a curved $L_\infty$-algebra from a quasi-twilled associative algebra. Finally, we give the controlling algebra of right deformation maps of quasi-twilled associative algebras, which is a curved $L_\infty$-algebra. An important byproduct is the controlling algebra of modified Rota-Baxter operators of weight $\lambda$ on an associative algebra, which is totally unknown before.

Let $\g=\oplus_{k\in\mathbb Z}\g^k$ be a $\mathbb Z$-graded vector space. The suspension operator $s$ changes the grading of $\g$ according to the rule $(s\g)^i:=\g^{i-1}$. The degree $1$ map $s:\g\longrightarrow s\g$ is defined by sending $x\in \g$ to its copy $s x\in s\g$.

\begin{defi}\rm(\cite{KS})
Let $\g=\oplus_{k\in\mathbb Z}\g^k$ be a $\mathbb Z$-graded vector space. A {\bf  curved $L_\infty$-algebra} is a $\mathbb Z$-graded vector space $\g$ equipped with a collection $(k\ge 0)$ of linear maps $l_k:\otimes^k\g\lon\g$ of degree $1$ with the property that, for any homogeneous elements $x_1,\cdots, x_n\in \g$, we have
\begin{itemize}\item[\rm(i)]
{\em (graded symmetry)} for every $\sigma\in S_{n}$,
\begin{eqnarray*}
l_n(x_{\sigma(1)},\cdots,x_{\sigma(n)})=\varepsilon(\sigma)l_n(x_1,\cdots,x_n),
\end{eqnarray*}
\item[\rm(ii)] {\em (generalized Jacobi identity)} for all $n\ge 0$,
\begin{eqnarray*}\label{sh-Lie}
\sum_{i=0}^{n}\sum_{\sigma\in  S(i,n-i) }\varepsilon(\sigma)l_{n-i+1}(l_i(x_{\sigma(1)},\cdots,x_{\sigma(i)}),x_{\sigma(i+1)},\cdots,x_{\sigma(n)})=0,
\end{eqnarray*}

\end{itemize}where $\varepsilon(\sigma)=\varepsilon(\sigma;x_1,\cdots, x_n)$ is the   Koszul sign for a permutation $\sigma\in S_n$ and $x_1,\cdots, x_n\in \g$.
\end{defi}
We denote a curved $L_\infty$-algebra by $(\g,\{l_k\}_{k=0}^{+\infty})$. A curved $L_\infty$-algebra $(\g,\{l_k\}_{k=0}^{+\infty})$ with $l_0=0$ is exactly an $L_\infty$-algebra~\cite{LS}.

\begin{defi}
Let $(\g, \{l_k\}_{k=0}^{+\infty})$ be a curved $L_\infty$-algebra. A {\bf Maurer-Cartan element} is a degree $0$ element $x$ satisfying
\begin{equation*}
 l_0+\sum_{k=1}^{+\infty}\frac{1}{k!}l_k(x, \cdots, x)=0.
\end{equation*}
\end{defi}

Let $x$ be a Maurer-Cartan element of a curved $L_\infty$-algebra $(\g, \{l_k\}_{k=0}^{+\infty})$. Define $l_{k}^{x}:\otimes^{k} \g\lon \g ~~(k\geq1)$ by
\begin{equation*}
l_{k}^{x}(x_1,\cdots,x_k)=\sum_{n=0}^{+\infty}\frac{1}{n!}l_{k+n}(\underbrace{x,\cdots,x}_{n},x_1,\cdots, x_k).
\end{equation*}

\begin{thm}\label{twistLin}{\rm(\cite{DSV,Get})}
With the above notation,  $(\g,\{l_k^{x}\}_{k=1}^{+\infty})$ is an $L_\infty$-algebra, which is called the {\bf twisted $L_\infty$-algebra} by $x$.
\end{thm}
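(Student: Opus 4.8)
The plan is to recognize this as the standard fact that twisting a curved $L_\infty$-algebra by a Maurer-Cartan element produces a genuine (flat) $L_\infty$-algebra, and to prove it through the coalgebra/codifferential formalism, in which the whole family of $L_\infty$-relations is packaged into a single square-zero condition. First I would reduce the statement to three checks: (a) each $l_k^x$ is a well-defined graded-symmetric map of degree $1$; (b) the generalized Jacobi identities hold for $\{l_k^x\}_{k\ge 1}$; and (c) the ``new curvature'' $l_0^x$ vanishes, so that the twisted structure has $l_0^x=0$ and is thus an honest $L_\infty$-algebra. Point (a) is immediate: inserting copies of the degree-$0$ element $x$ into the leading slots of the graded-symmetric degree-$1$ maps $l_{k+n}$ alters neither the degree nor the Koszul signs governing permutations of the remaining arguments, so graded symmetry of $l_k^x$ follows from that of the $l_{k+n}$ (one assumes throughout the filtered/pro-nilpotent setting in which the infinite sums converge). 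Point (c) is literally the defining equation: $l_0^x=\sum_{n\ge 0}\frac{1}{n!}l_n(x,\cdots,x)$ is exactly the left-hand side of the Maurer-Cartan equation, hence $l_0^x=0$.

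The heart of the argument is (b). I would encode the curved $L_\infty$-structure $\{l_k\}_{k\ge 0}$ as a degree-$1$ coderivation $Q=\sum_{k\ge 0}Q_k$ of the completed cofree cocommutative coalgebra $\widehat{S}^c(\g)=\prod_{k\ge 0}S^k(\g)$, whose Taylor coefficients are the $l_k$, with $Q_0\colon \mathbb K\lon \g$ recording the curvature $l_0$; in this convention, where all brackets have degree $1$, the graded symmetry of the $l_k$ is automatic and the single identity $Q^2=0$ is equivalent to the full family of generalized Jacobi identities. The Maurer-Cartan element $x$ then determines the constant degree-$0$ coderivation $\mathbf{x}$ with $\mathbf{x}_0(1)=x$ and all higher coefficients zero, and its exponential $\Phi\defbe e^{\mathbf{x}}$ is a coalgebra automorphism of $\widehat{S}^c(\g)$. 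I would set $Q^x\defbe \Phi\circ Q\circ\Phi^{-1}$. Because conjugation by a coalgebra automorphism sends coderivations to coderivations and commutes with squaring, $Q^x$ is again a degree-$1$ coderivation with $(Q^x)^2=\Phi\,Q^2\,\Phi^{-1}=0$; translating this square-zero identity back into a statement about Taylor coefficients yields precisely the generalized Jacobi identities for the coefficients of $Q^x$.

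It then remains to identify those coefficients with the maps $l_k^x$. Using $\Phi\circ Q\circ\Phi^{-1}=e^{\ad_{\mathbf{x}}}(Q)=\sum_{n\ge 0}\frac{1}{n!}(\ad_{\mathbf{x}})^n(Q)$, and computing that the bracket $[\mathbf{x},-]$ with the constant coderivation $\mathbf{x}$ inserts exactly one copy of $x$ into the corestriction (since $\mathbf{x}$ acts on $S^j(\g)$ by multiplication by $x$, and $Q$ produces no counit component), one finds that $(\ad_{\mathbf{x}})^n(Q)$ inserts $n$ copies of $x$ with no extra multiplicity. Hence the $\frac{1}{n!}$ in the exponential matches the $\frac{1}{n!}$ in the definition, and the $k$-th Taylor coefficient of $Q^x$ becomes $\sum_{n\ge 0}\frac{1}{n!}l_{k+n}(x,\cdots,x,-,\cdots,-)=l_k^x$; in particular its $k=0$ coefficient is $l_0^x=0$, so $Q^x$ carries no curvature and $\{l_k^x\}_{k\ge 1}$ is a flat $L_\infty$-algebra. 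I expect this final bookkeeping — tracking the factorials and the Koszul signs through the iterated bracket and confirming they collapse to the stated formula — to be the main technical point, though it is considerably eased by the fact that $x$ has degree $0$, so the signs attached to the inserted copies of $x$ are all trivial. A fully elementary alternative, avoiding coalgebra language, is to substitute the definition of $l_k^x$ directly into the generalized Jacobi identity, split each term by how many copies of $x$ land inside the inner bracket $l_i$, and reorganize the resulting multiple sum using the original Jacobi identities together with the Maurer-Cartan equation; this route is correct but notationally far heavier, and I would present the coalgebra argument.
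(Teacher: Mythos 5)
This theorem is stated in the paper without proof --- it is quoted from the references \cite{DSV,Get} --- so there is no internal argument to compare against; the right benchmark is the standard proof in those references, and the coalgebraic route you take is essentially that proof. Your points (a) and (c) are exactly right: inserting the degree-$0$ element $x$ disturbs neither degrees nor Koszul signs, and $l_0^x$ is verbatim the left-hand side of the Maurer--Cartan equation, hence zero. Your convergence caveat is also well placed; in this paper's applications it is vacuous, since the relevant curved $L_\infty$-algebras have $l_k=0$ for all large $k$, so every sum is finite.

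There is, however, one concrete slip in your key step (b): the direction of the conjugation. With $\mathbf{x}$ the constant coderivation (multiplication by $x$) and $Q$ the codifferential, the corestriction of $[\mathbf{x},Q]=\mathbf{x}Q-Q\mathbf{x}$ receives no contribution from $\mathbf{x}Q$ (as you note, $Q$ has no $S^0$-output, and $p_{\g}\circ\mathbf{x}$ vanishes except on $S^0$), so it equals $-\,l_{k+1}(x,-,\dots,-)$, with a minus sign. Consequently $\Phi\circ Q\circ\Phi^{-1}=e^{\ad_{\mathbf{x}}}(Q)$ has Taylor coefficients $\sum_{n}\frac{(-1)^n}{n!}\,l_{k+n}(x,\dots,x,-,\dots,-)$: this is the twist by $-x$, not by $x$, and its curvature $\sum_n\frac{(-1)^n}{n!}l_n(x,\dots,x)$ is \emph{not} killed by the Maurer--Cartan equation for $x$, so the argument as literally written does not close. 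The repair is a one-character change: set $Q^x\defbe\Phi^{-1}\circ Q\circ\Phi=e^{-\ad_{\mathbf{x}}}(Q)$; then the two minus signs cancel at each order, the $k$-th Taylor coefficient is exactly $l_k^x$, and the $S^0$-coefficient is $l_0^x=0$ by Maurer--Cartan. With this correction your argument is complete, and it coincides with the argument in the cited literature.
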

We recall Voronov's derived bracket construction \cite{Vo}, which is a powerful method for constructing a curved $L_\infty$-algebra.

\begin{defi}\rm(\cite{Vo})
A {\bf curved $V$-data} consists of a quadruple $(L,F,P,\Delta)$, where
\begin{itemize}
\item[$\bullet$] $(L=\oplus L^i,[\cdot,\cdot])$ is a graded Lie algebra,
\item[$\bullet$] $F$ is an abelian graded Lie subalgebra of $(L,[\cdot,\cdot])$,
\item[$\bullet$] $P:L\lon L$ is a projection, that is $P\circ P=P$, whose image is $F$ and kernel is a  graded Lie subalgebra of $(L,[\cdot,\cdot])$,
\item[$\bullet$] $\Delta$ is an element in $L^1$ such that $[\Delta,\Delta]=0$.
\end{itemize}
When $\Delta\in\ker(P)^{1}$ such that $[\Delta, \Delta]=0$, we refer to $(L,F,P,\Delta)$ as a {\bf $V$-data}.
\end{defi}

\begin{thm}\rm(\cite{Vo})\label{cV}
Let $(L,F,P,\Delta)$ be a curved $V$-data. Then  $(F,\{l_k\}_{k=0}^{+\infty})$ is a curved $L_\infty$-algebra, where $l_k$ are given by
\begin{equation*}
l_0=P(\Delta), \quad l_k(x_1, \cdots, x_n)=P([\cdots[[\Delta, x_1], x_2], \cdots, x_n]).
\end{equation*}
\end{thm}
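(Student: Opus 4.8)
The plan is to verify directly that the operations $l_k$ given by $l_0=P(\Delta)$ and $l_k(x_1,\ldots,x_k)=P([\cdots[[\Delta,x_1],x_2],\cdots,x_k])$ satisfy the two defining axioms of a curved $L_\infty$-algebra on $F=\Img P$. First I would record that the operations are well defined and of the correct degree: since $\Delta\in L^1$ and the bracket $[\cdot,\cdot]$ has degree $0$, each iterated bracket $[\cdots[\Delta,x_1],\cdots,x_k]$ raises total degree by exactly $1$, and $P$ preserves degree and has image $F$, so $l_k$ takes values in $F$ and has degree $1$; in particular $l_0=P(\Delta)\in F^1$ supplies the curvature.

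For graded symmetry it suffices to treat an adjacent transposition $x_i\leftrightarrow x_{i+1}$, since these generate $S_n$. Writing $Y=[\cdots[\Delta,x_1],\cdots,x_{i-1}]\in L$ and applying the graded Jacobi identity to $[Y,[x_i,x_{i+1}]]$, the left-hand side vanishes because $x_i,x_{i+1}\in F$ and $F$ is abelian, which yields $[[Y,x_i],x_{i+1}]=(-1)^{|x_i||x_{i+1}|}[[Y,x_{i+1}],x_i]$, a relation independent of $|Y|$. Applying $P$ gives $l_n(\ldots,x_i,x_{i+1},\ldots)=(-1)^{|x_i||x_{i+1}|}l_n(\ldots,x_{i+1},x_i,\ldots)$, which is exactly the Koszul sign of the transposition; hence $l_n$ is graded symmetric, and this already holds in $L$ before $P$ is applied.

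The heart of the proof is the generalized Jacobi identity, for which I would follow Voronov's strategy and establish the master identity
\[
\sum_{i=0}^{n}\sum_{\sigma\in S(i,n-i)}\varepsilon(\sigma)\,l_{n-i+1}\bigl(l_i(x_{\sigma(1)},\ldots,x_{\sigma(i)}),x_{\sigma(i+1)},\ldots,x_{\sigma(n)}\bigr)=\tfrac12 P\bigl([\cdots[[\Delta,\Delta],x_1],\cdots,x_n]\bigr),
\]
valid as an identity for every $\Delta\in L^1$; the theorem then follows at once from the hypothesis $[\Delta,\Delta]=0$. To prove this identity I would expand the right-hand side by repeatedly applying the graded Jacobi identity, using that each $[\cdot,x_j]$ is a graded derivation of the bracket; this distributes the inputs $x_1,\ldots,x_n$ over the two copies of $\Delta$ and produces a sum indexed precisely by $(i,n-i)$-unshuffles, whose terms are brackets $[A,B]$ with $A,B$ iterated brackets of $\Delta$ against complementary ordered subsets. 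For each such term I would split $A=P(A)+(\Id-P)(A)$ and likewise $B$: the summand $[P(A),P(B)]$ vanishes because $F$ is abelian, while $[(\Id-P)(A),(\Id-P)(B)]$ lies in $\ker P$ (a subalgebra) and is killed by the outer $P$; the two surviving mixed terms $[P(A),(\Id-P)(B)]$ and $[(\Id-P)(A),P(B)]$ are exactly what reassemble, after applying $P$, into the composites $l_{n-i+1}(l_i(\cdots),\cdots)$, the factor $\tfrac12$ reflecting the symmetry between the two $\Delta$-slots. (I already checked the $n=0$ case, where the identity reads $l_1(l_0)=P[\Delta,P\Delta]=\tfrac12 P[\Delta,\Delta]$.)

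The main obstacle is the sign and shuffle bookkeeping in this last step: matching the Koszul signs $\varepsilon(\sigma)$ attached to the unshuffles $S(i,n-i)$ against the signs generated by the successive applications of the graded Jacobi identity, and confirming that the two mixed terms reproduce the inner bracket $l_i$ (wrapped in $P$) in the correct slot. The conceptual input---$F$ abelian, $\ker P$ a subalgebra, and $[\Delta,\Delta]=0$---is precisely what forces the collapse, so the difficulty is organizational rather than structural. I expect an induction on $n$, peeling off $x_n$ via the derivation property of $[\cdot,x_n]$ and invoking the already-established graded symmetry to reorganize the unshuffle sum, to make the telescoping transparent and keep the combinatorics manageable.
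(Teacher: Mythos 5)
The paper offers no proof of this theorem---it is quoted directly from Voronov \cite{Vo}---and your proposal reconstructs precisely Voronov's own argument: reduce all the generalized Jacobi identities to a single master identity expressing the curved Jacobiator as the derived brackets of $\tfrac12[\Delta,\Delta]$, using the unshuffle expansion from the derivation property of the adjoint action together with the splitting $\Id = P + (\Id-P)$, where $[P(A),P(B)]$ dies because $F$ is abelian and $P[(\Id-P)A,(\Id-P)B]$ dies because $\ker P$ is a subalgebra. Your degree count, adjacent-transposition symmetry argument, and the $n=0$, $n=1$ instances of the master identity (including the curvature terms $l_{n+1}(l_0,x_1,\dots,x_n)$, which is exactly what extends Voronov's uncurved Theorem to the curved setting) all check out, so this is the same route as the cited source with only the sign and unshuffle bookkeeping left to execute.
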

\begin{thm}\label{curved-L}
Let $(\huaA,A,A')$ be a quasi-twilled associative algebra. Then there is a curved $V$-data $(L,F,P,\Delta)$ as follows:
\begin{itemize}
\item[$\bullet$] the graded Lie algebra $(L,[\cdot,\cdot])$ is given by $(\oplus_{n=0}^{+\infty}\Hom(\otimes^{n+1}(A\oplus A'), A\oplus A'), [\cdot, \cdot]_G)$,
\item[$\bullet$] the abelian graded Lie subalgebra $F$ is given by $\oplus_{n=0}^{+\infty}\Hom(\otimes^{n+1}A, A')$,
\item[$\bullet$] $P:L\lon L$ is the projection onto the subspace $F$,
\item[$\bullet$] $\Delta=\pi+\xi+\eta+\beta+\rho+\mu+\theta$.
\end{itemize}
Consequently, we obtain a curved $L_{\infty}$-algebra $(\oplus_{n=0}^{+\infty}\Hom(\otimes^{n+1}A, A'), l_0, l_1, l_2)$, where $l_0,l_1,l_2$ are given by
\begin{eqnarray*}
l_0&=&\theta\\
l_1(f)&=&[\pi+\rho+\mu, f]_G\\
l_2(f, g)&=&[[\xi+\eta+\beta, f]_G, g]_G,
\end{eqnarray*}
for all $f\in \Hom(\otimes^mA,A')$ and $g\in \Hom(\otimes^nA,A')$.

Furthermore, a linear map $D:A\longrightarrow A'$ is a right deformation map of $(\huaA, A, A')$ if and only if $D$ is a Maurer-Cartan element of the above curved $L_{\infty}$-algebra.
\end{thm}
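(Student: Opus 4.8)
The plan is to realize the asserted curved $L_\infty$-algebra as the output of Voronov's derived-bracket machine (Theorem \ref{cV}), so the substance of the argument is to check that the quadruple $(L,F,P,\Delta)$ is a curved $V$-data. Everything is controlled by one auxiliary integer grading on $L=\oplus_n\Hom(\otimes^{n+1}(A\oplus A'),A\oplus A')$, which I keep distinct from the Gerstenhaber (homological) degree: to a homogeneous multilinear map $\phi$ I assign the weight $w(\phi)=(\text{number of } A'\text{-inputs})-(\text{number of } A'\text{-outputs})$. Inspecting the composition formula for $\circ$ shows that when the output of $g$ is plugged into an input slot of $f$ one has $w(f\circ g)=w(f)+w(g)$, so $w$ is additive for the bracket $[\cdot,\cdot]_G$. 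Moreover $w\ge -1$ always, with equality exactly when all inputs come from $A$ and the output lies in $A'$; hence $F$ is precisely the weight $(-1)$ part of $L$, while $\ker P=\oplus_{d\ge 0}(\text{weight }d)$.

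First I would use this grading to verify the axioms. The bracket of two weight $(-1)$ elements would have weight $-2<-1$, so it vanishes and $F$ is abelian. Since weights add and the nonnegative weights are closed under addition, $\ker P$ is a graded Lie subalgebra; with $P\circ P=P$ and $\mathrm{Im}(P)=F$ this gives the projection axiom. Finally $\Delta=\Omega$ has Gerstenhaber degree $1$ and satisfies $[\Omega,\Omega]_G=0$ since $\Omega$ is an associative product, while its decomposition exhibits a nonzero $F$-component $\theta$ (weight $-1$), so $(L,F,P,\Delta)$ is genuinely curved rather than an ordinary $V$-data. Theorem \ref{cV} then yields the curved $L_\infty$-structure on $F$.

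Next I would compute the brackets by a degree count. In $l_k(x_1,\dots,x_k)=P([\cdots[\Delta,x_1],\dots,x_k])$ each $x_i\in F$ has weight $-1$, so the term built from the weight-$d$ component of $\Omega$ has weight $d-k$; applying $P$ (projection onto weight $-1$) kills it unless $d=k-1$. Because $\Omega$ has components only in weights $-1,0,1$ (namely $\theta$; then $\pi,\rho,\mu$; then $\xi,\eta,\beta$), only $k=0,1,2$ survive, giving $l_k=0$ for $k\ge 3$ and reading off $l_0=\theta$, $l_1(f)=[\pi+\rho+\mu,f]_G$, and $l_2(f,g)=[[\xi+\eta+\beta,f]_G,g]_G$; here $P$ acts as the identity since each surviving expression already lies in the weight $(-1)$ part $F$.

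For the Maurer-Cartan statement, note that $D\in\Hom(A,A')$ has degree $0$ in $F$, so its MC equation collapses (using $l_k=0$ for $k\ge 3$) to $l_0+l_1(D)+\frac{1}{2}l_2(D,D)=0$. Applying the same count to $\Omega^D=e^{X_D}(\Omega)=\sum_n\frac{1}{n!}[\cdots[\Omega,D],\dots,D]$, its weight $(-1)$ component $\theta^D$ receives contributions exactly from the pairs $(d,n)$ with $d-n=-1$, whence $\theta^D=\theta+[\pi+\rho+\mu,D]_G+\frac{1}{2}[[\xi+\eta+\beta,D]_G,D]_G=l_0+l_1(D)+\frac{1}{2}l_2(D,D)$. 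Thus the MC equation is precisely $\theta^D=0$, which by \eqref{def-7} of Theorem \ref{twisting-ass-deformation-1} is equivalent to the defining identity \eqref{deformation-map}, i.e. to $D$ being a right deformation map. The only genuinely substantive point is the additivity of $w$ under $[\cdot,\cdot]_G$ and the resulting closure of $\ker P$; once that is established, the remaining identifications are routine bookkeeping.
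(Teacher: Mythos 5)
Your proposal is correct, and it necessarily shares the paper's skeleton (exhibit the curved $V$-data, then invoke Theorem \ref{cV}), but the two nontrivial verifications are carried out differently. The paper checks the $V$-data axioms by declaring them obvious, computes $l_0,l_1,l_2$ by direct expansion, and gets $l_k=0$ for $k\geq 3$ by observing that $[[\Delta,f]_G,g]_G$ already lies in $F$ and that $F$ is abelian; you instead introduce the weight $w=(\#A'\text{-inputs})-(\#A'\text{-outputs})$, prove its additivity under $[\cdot,\cdot]_G$, and let a single count do all of this at once ($F$ is exactly the weight-$(-1)$ part, $\ker P$ the nonnegative part, and only the derived brackets with $d=k-1$, $d\in\{-1,0,1\}$, survive the projection). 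This is a cleaner, uniform justification of precisely the facts the paper asserts by inspection. The more genuine divergence is the Maurer--Cartan equivalence: the paper expands $l_0(x,y)+l_1(D)(x,y)+\tfrac{1}{2}l_2(D,D)(x,y)$ explicitly and compares it with \eqref{deformation-map}, whereas you apply the same weight count to $e^{X_D}(\Omega)$ to identify $l_0+l_1(D)+\tfrac{1}{2}l_2(D,D)$ with the component $\theta^D$ of the twisted product, and then quote \eqref{def-7} of Theorem \ref{twisting-ass-deformation-1}. This is legitimate (that theorem precedes this one and is proved independently, so there is no circularity), and it buys a conceptual explanation of why the MC equation is exactly $\theta^D=0$, at the cost of depending on the twisting theorem where the paper's argument is self-contained. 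One small imprecision: your claim that the data is ``genuinely curved'' because $\theta$ is a nonzero $F$-component is unnecessary and not always true ($\theta=0$ for semidirect products); since the paper's notion of curved $V$-data does not require $P(\Delta)\neq 0$, nothing in your argument hinges on it.
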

\begin{proof}
It is obvious  that $F$ is an abelian graded Lie subalgebra of $L$. Since $P$ is the projection onto $F$, we have $P^2=P$. Obviously, the kernel of $P$ is a graded Lie subalgebra of $L$ and $[\Delta,\Delta]_G=0$. Thus, $(L,F,P,\Delta)$ is a curved $V$-data. By Theorem \ref{cV}, for all $f\in \Hom(\otimes^mA,A')$, $g\in \Hom(\otimes^nA,A')$, by direct computation, we have
\begin{eqnarray*}
l_0&=&P(\Delta)=\theta,\\
l_1(f)&=&P([\pi+\xi+\eta+\beta+\rho+\mu+\theta, f]_G)=[\pi+\rho+\mu, f]_G,\\
l_2(f, g)&=&P([[\pi+\xi+\eta+\beta+\rho+\mu+\theta, f]_G, g]_G)=[[\xi+\eta+\beta, f]_G, g]_G.
\end{eqnarray*}
By direct computation, we also have
$$
[[\pi+\xi+\eta+\beta+\rho+\mu+\theta, f]_G, g]_G\in\Hom(\otimes^{m+n}A, A').
$$
Since $F$ is abelian, we have $l_k=0$ for all $k\geq 3$.
Moreover, for all $x,y\in A$, we have
\begin{eqnarray*}
  &&l_0(x, y)+l_1(D)(x, y)+\frac{1}{2}l_2(D, D)(x, y)\\
  &=&\theta(x, y)+[\pi+\rho+\mu, D]_G(x, y)+\frac{1}{2}[[\xi+\eta+\beta, D]_G, D]_G(x, y)\\
  &=&\theta(x, y)+\rho(x, D(y))+\mu(D(x),y)-D(\pi(x, y))+\beta(D(x), D(y))-D(\xi(x, D(y)))-D(\eta(D(x),y)).
\end{eqnarray*}
Thus, $D$ is a right deformation map of $(\huaA, A, A')$ if and only if $D$ is a Maurer-Cartan element of the curved $L_\infty$-algebra $(\oplus_{n=0}^{+\infty}\Hom(\otimes^{n+1}A, A'), l_0, l_1, l_2)$.
\end{proof}

\begin{cor}
Consider the quasi-twilled associative algebra $(A\oplus_MA,A,A)$ given in Example \ref{direct-sum}. Then $(\oplus_{n=0}^{+\infty}\Hom(\otimes^{n+1}A, A), l_0, l_1, l_2)$ is a curved $L_\infty$-algebra, where $l_0, l_1, l_2$ are given by $l_0(x,y)=\lambda (x\cdot_{A}y)$, $l_1=0$,
 and
\begin{eqnarray*}
&&l_2(f, g)(x_1,\dots,x_{m+n})\\
&=&(-1)^{m-1}g(x_1,\dots,x_n)\cdot_{A}f(x_{n+1},\dots,x_{m+n})+(-1)^{m(n-1)}f(x_1,\dots,x_m)\cdot_{A}g(x_{m+1},\dots,x_{m+n})\\
&&-\sum_{i=1}^m(-1)^{in}(-1)^mf(x_1,\dots,x_{i-1},x_i\cdot_{A}g(x_{i+1},\dots,x_{i+n}),x_{i+n+1},\dots,x_{m+n})\\
&&+\sum_{i=1}^m(-1)^{(i-1)n}(-1)^mf(x_1,\dots,x_{i-1},g(x_i,\dots,x_{i+n-1})\cdot_{A}x_{i+n},x_{i+n+1},\dots,x_{m+n})\\
&&-\sum_{i=1}^n(-1)^{(i-1)m}(-1)^{m(n-1)}g(x_1,\dots,x_{i-1},f(x_i,\dots,x_{i+m-1})\cdot_{A}x_{i+m},x_{i+m+1},\dots,x_{m+n})\\
&&+\sum_{i=1}^n(-1)^{(i-1)m}(-1)^{mn}g(x_1,\dots,x_{i-1},x_i\cdot_{A}f(x_{i+1},\dots,x_{i+m}),x_{i+m+1},\dots,x_{m+n}),
\end{eqnarray*}
for all $f\in\Hom(\otimes^mA, A), g\in\Hom(\otimes^nA, A)$ and $x,y,x_1,\dots,x_{m+n}\in A$.

Moreover, Maurer-Cartan elements of the curved $L_\infty$-algebra $(\oplus_{n=0}^{+\infty}\Hom(\otimes^{n+1}A, A), l_0, l_1, l_2)$ are exactly modified Rota-Baxter operators of weight $\lambda$ on the associative algebra $(A,\cdot_{A})$. Thus, this curved $L_\infty$-algebra can be viewed as the {\bf controlling algebra of modified Rota-Baxter operators of weight $\lambda$} on the associative algebra $(A,\cdot_{A})$.
\end{cor}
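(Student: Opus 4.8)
The plan is to specialize Theorem \ref{curved-L} to the quasi-twilled associative algebra $(A\oplus_M A, A, A)$ of Example \ref{direct-sum}. First I would read off the seven structure maps of $\Omega=\cdot_M$ by matching the defining formula $(x,u)\cdot_M(y,v)=(x\cdot_{A}v+u\cdot_{A}y,\ \lambda(x\cdot_{A}y)+u\cdot_{A}v)$ against the general decomposition $\Omega=\pi+\xi+\eta+\beta+\rho+\mu+\theta$. This yields $\pi=\rho=\mu=0$ and $\theta(x,y)=\lambda(x\cdot_{A}y)$, while $\xi,\eta,\beta$ are each the multiplication $\cdot_{A}$ placed on the relevant tensor factors ($A\otimes A'$, $A'\otimes A$, and $A'\otimes A'$ respectively).

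With these maps in hand, the formulas of Theorem \ref{curved-L} give $l_0=\theta$, hence $l_0(x,y)=\lambda(x\cdot_{A}y)$, and $l_1(f)=[\pi+\rho+\mu,f]_G=0$ because $\pi,\rho,\mu$ all vanish. The only substantive computation is $l_2(f,g)=[[\xi+\eta+\beta,f]_G,g]_G$ for $f\in\Hom(\otimes^mA,A)$ and $g\in\Hom(\otimes^nA,A)$. I would expand the inner bracket $[\xi+\eta+\beta,f]_G=(\xi+\eta+\beta)\circ f-(-1)^{m-1}f\circ(\xi+\eta+\beta)$ from the definition of $\circ$, bracket the result with $g$, and keep the component valued in the second copy $A'=A$; by Theorem \ref{curved-L} the whole double bracket already lands in $\Hom(\otimes^{m+n}A,A')$, so the projection $P$ is automatic. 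A type analysis shows the surviving terms are of exactly two kinds: those in which $\beta$ multiplies the outputs of $f$ and $g$ (giving the two leading terms $g(\cdots)\cdot_{A}f(\cdots)$ and $f(\cdots)\cdot_{A}g(\cdots)$), and those in which $\xi$ or $\eta$ multiplies an argument against the output of one map and inserts the result into the other (giving the four summation terms).

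The \emph{main obstacle} is the sign bookkeeping: one must track the Koszul sign $(-1)^{(i-1)(n-1)}$ appearing in each $\circ$ at every insertion position, the factor $(-1)^{m-1}$ from the outer Gerstenhaber bracket, and the further signs produced upon bracketing with $g$, and then verify that these assemble into the signs of the six terms displayed in the statement, namely $(-1)^{m-1}$, $(-1)^{m(n-1)}$, $(-1)^{in}(-1)^m$, $(-1)^{(i-1)n}(-1)^m$, $(-1)^{(i-1)m}(-1)^{m(n-1)}$, and $(-1)^{(i-1)m}(-1)^{mn}$. As a sanity check I would record the case $m=n=1$, $f=g=D$, where the six terms collapse to $\tfrac{1}{2}l_2(D,D)(x,y)=D(x)\cdot_{A}D(y)-D(x\cdot_{A}D(y))-D(D(x)\cdot_{A}y)$.

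Finally, for the Maurer-Cartan statement I would invoke the last assertion of Theorem \ref{curved-L}: a linear map $D\colon A\to A$ is a Maurer-Cartan element of $(\oplus_{n}\Hom(\otimes^{n+1}A,A),l_0,l_1,l_2)$ if and only if it is a right deformation map of $(A\oplus_M A, A, A)$. Combining this with the example of this section identifying the right deformation maps of $(A\oplus_M A,A,A)$ with the modified Rota-Baxter operators of weight $\lambda$ on $(A,\cdot_{A})$ shows that the Maurer-Cartan elements are exactly those operators, so the curved $L_\infty$-algebra is indeed their controlling algebra. Equivalently, substituting $l_0$, $l_1=0$, and the sanity-check value of $\tfrac12 l_2(D,D)$ into $l_0+l_1(D)+\tfrac{1}{2}l_2(D,D)=0$ reproduces the defining identity $D(x)\cdot_{A}D(y)-D(x\cdot_{A}D(y))-D(D(x)\cdot_{A}y)=-\lambda(x\cdot_{A}y)$.
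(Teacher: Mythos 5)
Your proposal is correct and follows essentially the same route the paper intends: the corollary is the direct specialization of Theorem \ref{curved-L} to $(A\oplus_M A,A,A)$, obtained by reading off $\pi=\rho=\mu=0$, $\theta(x,y)=\lambda(x\cdot_A y)$, and $\xi,\eta,\beta$ equal to $\cdot_A$ on the indicated factors, then expanding $l_2(f,g)=[[\xi+\eta+\beta,f]_G,g]_G$ and invoking the Maurer--Cartan characterization of right deformation maps together with the example identifying right deformation maps of $(A\oplus_M A,A,A)$ with modified Rota--Baxter operators of weight $\lambda$. Your structural analysis of which bracket terms survive, and your $m=n=1$ check giving $\tfrac{1}{2}l_2(D,D)(x,y)=D(x)\cdot_A D(y)-D(x\cdot_A D(y))-D(D(x)\cdot_A y)$, are exactly right and suffice for the Maurer--Cartan claim.
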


\begin{cor}
Consider the quasi-twilled associative algebra $(A\ltimes_{(\rho,\mu)}V,A,V)$ given in Example \ref{semi-direct product}. Then the curved $L_\infty$-algebra $(\oplus_{n=0}^{+\infty}\Hom(\otimes^{n+1}A, V), l_0, l_1, l_2)$ is exactly a cochain complex $(\oplus_{n=0}^{+\infty}\Hom(\otimes^{n+1}A, V),\dM)$, where $l_0=l_2=0$ and $l_1=\dM$ is given by
\begin{eqnarray}
\label{derivation}\dM f(x_1,\dots,x_{m+1})&=&(-1)^{m-1}\rho(x_1)f(x_2,\dots,x_{m+1})+\mu(x_{m+1})f(x_1,\dots,x_m)\\
\nonumber &&+(-1)^{m-1}\sum_{i=1}^m(-1)^if(x_1,\dots,x_{i-1},x_i\cdot_{A}x_{i+1},x_{i+2},\dots,x_{m+1}),
\end{eqnarray}
for all $f\in\Hom(\otimes^mA, V), x_1,\dots,x_{m+1}\in A$.

Moreover, a linear map $D:A\longrightarrow V$ is a derivation from $(A,\cdot_{A})$ to $V$ if and only if $\dM(D)=0$. Thus, this cochain complex can be viewed as the {\bf controlling algebra of derivations} from the associative algebra $(A,\cdot_{A})$ to $V$. See \cite{Guo} for more details.
\end{cor}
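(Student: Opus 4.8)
The plan is to specialize Theorem \ref{curved-L} to the semi-direct product $(A\ltimes_{(\rho,\mu)}V,A,V)$ of Example \ref{semi-direct product} and to compute the resulting brackets by hand. First I would read off the structure maps from the multiplication $(x,u)\cdot_{(\rho,\mu)}(y,v)=(x\cdot_{A}y,\rho(x)v+\mu(y)u)$: comparing with the general decomposition $\Omega=\pi+\xi+\eta+\beta+\rho+\mu+\theta$ gives $\pi(x,y)=x\cdot_{A}y$, $\rho(x,v)=\rho(x)v$, $\mu(u,y)=\mu(y)u$, and $\xi=\eta=\beta=\theta=0$. In particular the Maurer--Cartan element of Theorem \ref{curved-L} is $\Delta=\pi+\rho+\mu$.

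Substituting into the formulas of Theorem \ref{curved-L} is then immediate for $l_0$ and $l_2$: since $\theta=0$ we get $l_0=\theta=0$, and since $\xi+\eta+\beta=0$ the formula $l_2(f,g)=[[\xi+\eta+\beta,f]_G,g]_G$ vanishes identically, so $l_2=0$ (and all higher brackets vanish because $F$ is abelian). The only remaining task is to evaluate $l_1(f)=[\pi+\rho+\mu,f]_G$ for $f\in\Hom(\otimes^{m}A,V)$ and to identify it with the operator $\dM$ in \eqref{derivation}.

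For this I would expand the Gerstenhaber bracket as $[\pi+\rho+\mu,f]_G=(\pi+\rho+\mu)\circ f-(-1)^{m-1}f\circ(\pi+\rho+\mu)$ and track which tensor slots can actually be filled. In the term $(\pi+\rho+\mu)\circ f$ the output of $f$ lies in $V$, so inserting it into the binary map leaves only the two compositions $\mu(f(x_1,\dots,x_m),x_{m+1})=\mu(x_{m+1})f(x_1,\dots,x_m)$ (slot $i=1$) and $\rho(x_1,f(x_2,\dots,x_{m+1}))=\rho(x_1)f(x_2,\dots,x_{m+1})$ (slot $i=2$, carrying the sign $(-1)^{m-1}$), since $\pi$ cannot accept a $V$-argument. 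In the term $f\circ(\pi+\rho+\mu)$ the inner binary map must output into $A$ so that $f$ can consume it, which forces only $\pi(x_i,x_{i+1})=x_i\cdot_{A}x_{i+1}$ to contribute and produces the summation $\sum_{i=1}^{m}(-1)^{i-1}f(x_1,\dots,x_i\cdot_{A}x_{i+1},\dots,x_{m+1})$. Collecting the overall sign $-(-1)^{m-1}(-1)^{i-1}=(-1)^{m-1}(-1)^{i}$ on this sum reproduces \eqref{derivation} exactly; the one delicate point here is the sign bookkeeping from the Koszul conventions, which I expect to be the main (though routine) obstacle.

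Finally, for the Maurer--Cartan characterization I would observe that, with $l_0=l_2=0$ and higher brackets zero, the Maurer--Cartan equation of Theorem \ref{curved-L} collapses to $l_1(D)=\dM(D)=0$. By Theorem \ref{curved-L} this is equivalent to $D$ being a right deformation map, which for this example is the derivation condition; concretely, setting $m=1$ in \eqref{derivation} yields $\dM D(x_1,x_2)=\rho(x_1)D(x_2)+\mu(x_2)D(x_1)-D(x_1\cdot_{A}x_2)$, whose vanishing is precisely $D(x\cdot_{A}y)=\rho(x)D(y)+\mu(y)D(x)$. That $\dM$ is a genuine coboundary operator follows from the structure itself: once $l_0=0$ the curved $L_\infty$-algebra of Theorem \ref{curved-L} is an honest $L_\infty$-algebra, whose $n=1$ Jacobi identity gives $l_1\circ l_1=0$; equivalently $l_1=[\Omega,\cdot]_G$ with $\Omega=\pi+\rho+\mu$ associative, so $\dM^2=\frac{1}{2}[[\Omega,\Omega]_G,\cdot]_G=0$. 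Hence $(\oplus_{n=0}^{+\infty}\Hom(\otimes^{n+1}A,V),\dM)$ is a cochain complex controlling derivations, as claimed.
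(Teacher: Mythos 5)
Your proposal is correct and takes exactly the route the paper intends: the corollary is a direct specialization of Theorem \ref{curved-L} to the quasi-twilled algebra of Example \ref{semi-direct product}, where $\pi(x,y)=x\cdot_{A}y$, $\rho(x,v)=\rho(x)v$, $\mu(u,y)=\mu(y)u$ and $\xi=\eta=\beta=\theta=0$, so that $l_0=\theta=0$, $l_2=[[\xi+\eta+\beta,\cdot]_G,\cdot]_G=0$, and $l_1=[\pi+\rho+\mu,\cdot]_G$ unwinds (with your sign bookkeeping, which is right) to \eqref{derivation}. Your closing observations---that the Maurer--Cartan equation collapses to $\dM(D)=0$, recovering the derivation identity at $m=1$, and that $\dM^2=0$ because $\pi+\rho+\mu$ is the associative multiplication of the semi-direct product---are likewise correct and consistent with the paper's framework.
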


\begin{cor}
Consider the quasi-twilled associative algebra $(A\ltimes_{(\rho,\mu)}A',A,A')$ given in Example \ref{semi-direct product-1}. Then we obtain a differential graded Lie algebra $(s(\oplus_{n=0}^{+\infty}\Hom(\otimes^{n+1}A, A')),\Courant{\cdot,\cdot},\dM)$, where the graded Lie bracket $\Courant{\cdot,\cdot}$ is given by
\begin{eqnarray}
\nonumber&&\Courant{f,g}(x_1,\dots,x_{m+n})\\
\nonumber&=&(-1)^{m-1}l_2(f,g)\\
\label{semi}&=&(-1)^{mn+1}f(x_1,\dots,x_m)\cdot_{A'}g(x_{m+1},\dots,x_{m+n})+g(x_1,\dots,x_n)\cdot_{A'}f(x_{n+1},\dots,x_{m+n}),
\end{eqnarray}
and the differential $\dM$ is given by \eqref{derivation} for all $f\in\Hom(\otimes^mA, A'), g\in\Hom(\otimes^nA, A')$, $x_1,\dots,x_{m+n}\in A$.

Moreover, Maurer-Cartan elements of $(s(\oplus_{n=0}^{+\infty}\Hom(\otimes^{n+1}A, A')),\Courant{\cdot,\cdot},\dM)$ are exactly crossed homomorphisms from the associative algebra $(A,\cdot_{A})$ to the associative algebra $(A',\cdot_{A'})$. Thus, this differential graded Lie algebra can be viewed as the {\bf controlling algebra of crossed homomorphisms} from the associative algebra$(A,\cdot_{A})$ to the associative algebra $(A',\cdot_{A'})$. See \cite{Das3} for more details.
\end{cor}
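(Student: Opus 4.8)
The plan is to specialize Theorem \ref{curved-L} to the quasi-twilled associative algebra $(A\ltimes_{(\rho,\mu)}A',A,A')$ and then to de-suspend the resulting curved $L_\infty$-algebra into a differential graded Lie algebra. First I would read off the structure maps from Example \ref{semi-direct product-1}: comparing the multiplication $(x,u)\cdot_{(\rho,\mu)}(y,v)=(x\cdot_A y,\ \rho(x)v+\mu(y)u+u\cdot_{A'}v)$ with the general decomposition of $\Omega$, one gets $\pi(x,y)=x\cdot_A y$, $\rho(x,v)=\rho(x)v$, $\mu(u,y)=\mu(y)u$ and $\beta(u,v)=u\cdot_{A'}v$, while $\xi=\eta=\theta=0$. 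By Theorem \ref{curved-L} this yields $l_0=\theta=0$, $l_1=[\pi+\rho+\mu,\cdot]_G$ and $l_2(f,g)=[[\beta,f]_G,g]_G$, with $l_k=0$ for $k\ge 3$. In particular $l_0=0$, so the controlling object is an honest $L_\infty$-algebra concentrated in $l_1$ and $l_2$.

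Next I would identify $l_1$ with the operator $\dM$ of \eqref{derivation}. The maps $\pi,\rho,\mu$ entering $l_1=[\pi+\rho+\mu,\cdot]_G$ are literally the same as those in Example \ref{semi-direct product}, so the Gerstenhaber-bracket computation carried out in the preceding corollary applies verbatim and gives $l_1=\dM$ as in \eqref{derivation}; no new calculation is needed here, since the extra map $\beta$ contributes only to $l_2$.

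The heart of the argument is the computation of $l_2(f,g)=[[\beta,f]_G,g]_G$ for $f\in\Hom(\otimes^mA,A')$, $g\in\Hom(\otimes^nA,A')$, and here I would run a type analysis on the lifts. Since $f$ and $g$ take values in $A'$, the only non-vanishing Gerstenhaber compositions are those plugging an $A'$-valued map into an $A'$-slot of $\beta$. Concretely, $f\circ\beta$ and $g\circ\beta$ vanish after lifting, because an $A'$-output cannot enter an $A$-slot of $f$ or $g$; hence $[\beta,f]_G=\beta\circ f$, a map with $m$ inputs in $A$ and one input in $A'$. Bracketing once more with $g$ then forces $g$ into that remaining $A'$-slot, and the two possible insertion positions, namely the first and second argument of $\beta$, produce exactly the two terms $f(\cdots)\cdot_{A'}g(\cdots)$ and $g(\cdots)\cdot_{A'}f(\cdots)$. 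Tracking the Koszul signs in the Gerstenhaber formula yields the closed form \eqref{semi} once the décalage sign $(-1)^{m-1}$ is absorbed into $\Courant{f,g}=(-1)^{m-1}l_2(f,g)$. This sign bookkeeping is the step I expect to be the main obstacle.

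Finally, I would invoke the standard fact that an $L_\infty$-algebra with $l_0=0$ and $l_k=0$ for $k\ge 3$ is, up to suspension, a differential graded Lie algebra: setting $\dM:=l_1$ and $\Courant{\cdot,\cdot}:=(-1)^{m-1}l_2$ on $s(\oplus_{n=0}^{+\infty}\Hom(\otimes^{n+1}A,A'))$, the generalized Jacobi identities for the pair $(l_1,l_2)$ become precisely $\dM^2=0$, the derivation property of $\dM$ with respect to $\Courant{\cdot,\cdot}$, and the graded Jacobi identity for $\Courant{\cdot,\cdot}$, so no independent verification of the DGLA axioms is needed. For the last claim, the Maurer-Cartan equation $\dM D+\frac{1}{2}\Courant{D,D}=0$ is the de-suspended form of the curved $L_\infty$ Maurer-Cartan equation, so by Theorem \ref{curved-L} its solutions are exactly the right deformation maps of $(A\ltimes_{(\rho,\mu)}A',A,A')$, which the crossed homomorphism example already identifies with crossed homomorphisms from $(A,\cdot_A)$ to $(A',\cdot_{A'})$; this closes the proof.
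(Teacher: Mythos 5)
Your proposal is correct and follows exactly the route the paper intends for this corollary: specialize Theorem \ref{curved-L} to Example \ref{semi-direct product-1} (so $\xi=\eta=\theta=0$, giving $l_0=0$, $l_1=[\pi+\rho+\mu,\cdot]_G=\dM$ as in \eqref{derivation}, and $l_2=[[\beta,\cdot]_G,\cdot]_G$ computed by the type analysis on lifts), then de-suspend via the sign $(-1)^{m-1}$ to obtain the DGLA and identify Maurer--Cartan elements with right deformation maps, i.e.\ crossed homomorphisms. No gaps; your type-checking argument for why only $\beta\circ f$ survives and why $g$ must land in the remaining $A'$-slot is precisely the computation underlying the paper's statement.
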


\begin{cor}
Consider the quasi-twilled associative algebra $(A\oplus A',A,A')$ given in Example \ref{direct product}. Then we obtain a differential graded Lie algebra
$(s(\oplus_{n=0}^{+\infty}\Hom(\otimes^{n+1}A, A')),\Courant{\cdot,\cdot},\dM)$, where the graded Lie bracket $\Courant{\cdot,\cdot}$ is given by \eqref{semi}
and the differential $\dM$ is given by
\begin{equation*}
\dM f(x_1,\dots,x_{m+1})=(-1)^{m-1}\sum_{i=1}^m(-1)^if(x_1,\dots,x_{i-1},x_i\cdot_{A}x_{i+1},x_{i+2},\dots,x_{m+1}),
\end{equation*}
for all $f\in\Hom(\otimes^mA,A'), g\in\Hom(\otimes^nA, A')$ and $x_1,\dots,x_{m+n}\in A$.

Moreover, Maurer-Cartan elements of $(s(\oplus_{n=0}^{+\infty}\Hom(\otimes^{n+1}A, A')),\Courant{\cdot,\cdot},\dM)$ are exactly associative algebra homomorphisms from $(A,\cdot_{A})$ to $(A',\cdot_{A'})$. Thus, this differential graded Lie algebra can be viewed as the {\bf controlling algebra of associative algebra homomorphisms} from $(A,\cdot_{A})$ to $(A',\cdot_{A'})$. See \cite{Barmeier,BD,Fregier,Gerstenhaber3} for more details.
\end{cor}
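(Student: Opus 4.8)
The plan is to specialize Theorem \ref{curved-L} to the quasi-twilled associative algebra $(A\oplus A',A,A')$ of Example \ref{direct product} and then read off the resulting structure maps explicitly. First I would record the relevant data: for the direct product the multiplication $\Omega$ has only $\pi=\cdot_A$ and $\beta=\cdot_{A'}$ as nonzero components, while $\xi=\eta=\rho=\mu=\theta=0$. Substituting this into the formulas of Theorem \ref{curved-L} gives $l_0=\theta=0$, $l_1(f)=[\pi,f]_G$ and $l_2(f,g)=[[\beta,f]_G,g]_G$, with all higher brackets vanishing. Since $l_0=0$, the curved $L_\infty$-algebra is an honest $L_\infty$-algebra concentrated in arities $1$ and $2$; after suspension this is precisely a differential graded Lie algebra $(s(\oplus_n\Hom(\otimes^{n+1}A,A')),\Courant{\cdot,\cdot},\dM)$, where the d\'ecalage identifies $\Courant{f,g}=(-1)^{m-1}l_2(f,g)$ for $f\in\Hom(\otimes^mA,A')$, exactly as in the crossed-homomorphism case.

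Next I would compute $l_1$. Using the lift convention, $\pi\circ f=0$ because $f$ takes values in $A'$ while $\hat\pi$ is supported only on $A\otimes A$; hence $l_1(f)=[\pi,f]_G=-(-1)^{m-1}f\circ\pi=(-1)^mf\circ\pi$. Expanding $f\circ\pi$ through the Gerstenhaber circle product reproduces the Hochschild-type differential $\dM$ of the statement, since $(-1)^m(-1)^{i-1}=(-1)^{m-1}(-1)^i$.

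The heart of the argument is the computation of $l_2$, and the one point that needs care is that the intermediate bracket $[\beta,f]_G$ must be computed in the full graded Lie algebra $L$ and not prematurely projected onto $F$. Concretely, $\hat f\circ\hat\beta=0$ because all inputs of $f$ lie in $A$ and cannot absorb the $A'$-valued output of $\hat\beta$; however $\hat\beta\circ\hat f\neq0$, producing the two mixed-type terms $f(z_1,\dots,z_m)\cdot_{A'}z_{m+1}$ and $(-1)^{m-1}z_1\cdot_{A'}f(z_2,\dots,z_{m+1})$ in $\Hom(A^{m,1},A')$, where one slot carries an $A'$-input. Bracketing once more with $g$, the term $\hat g\circ(\hat\beta\circ\hat f)$ vanishes (its output is in $A'$, which $g$ cannot absorb), so $l_2(f,g)=(\hat\beta\circ\hat f)\circ\hat g$; inserting $\hat g$ into the surviving $A'$-slots and projecting onto $F$ yields $l_2(f,g)=(-1)^{m(n-1)}f\cdot_{A'}g+(-1)^{m-1}g\cdot_{A'}f$ (abbreviating the evident arguments). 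Multiplying by the d\'ecalage sign $(-1)^{m-1}$ gives exactly \eqref{semi}.

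Finally, since $l_0=0$ and only $l_1,l_2$ survive, $(s(\oplus_n\Hom(\otimes^{n+1}A,A')),\Courant{\cdot,\cdot},\dM)$ is a genuine differential graded Lie algebra, and a degree-$1$ element $D\in\Hom(A,A')$ is Maurer-Cartan iff $\dM D+\tfrac12\Courant{D,D}=0$; for $m=n=1$ this reads $D(x\cdot_Ay)=D(x)\cdot_{A'}D(y)$, the associative algebra homomorphism condition. Alternatively one may invoke Theorem \ref{curved-L}, which identifies Maurer-Cartan elements with right deformation maps, together with the homomorphism example already computed for $(A\oplus A',A,A')$. The main obstacle is purely the sign and slot-type bookkeeping in $l_2$: the computation collapses to zero if one forgets that $[\beta,f]_G$ is supported on mixed $(A,A')$-tensors, and the final normalization depends on correctly tracking the suspension sign $(-1)^{m-1}$.
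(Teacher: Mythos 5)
Your proposal is correct and follows essentially the same route as the paper: the corollary is a direct specialization of Theorem \ref{curved-L} to Example \ref{direct product} (where $\xi=\eta=\rho=\mu=\theta=0$, so $l_0=0$), with $l_1(f)=(-1)^m f\circ\pi$ and $l_2(f,g)=[[\beta,f]_G,g]_G$ computed exactly as you do, including the key point that $[\beta,f]_G$ lives in the mixed components before the final projection, and the d\'ecalage sign $(-1)^{m-1}$ converting $l_2$ into \eqref{semi}. Your sign bookkeeping ($(-1)^{m-1}(-1)^{m(n-1)}=(-1)^{mn+1}$) and the Maurer--Cartan verification $D(x\cdot_A y)=D(x)\cdot_{A'}D(y)$ both check out.
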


Let $(\huaA, A, A')$ be a quasi-twilled associative algebra and $D:A\longrightarrow A'$ a right deformation map of $(\huaA, A, A')$. By Theorem \ref{curved-L}, $D$ is a  Maurer-Cartan element of the curved $L_{\infty}$-algebra $(\oplus_{n=0}^{+\infty}\Hom(\otimes^{n+1}A, A'), l_0, l_1, l_2)$. By Theorem \ref{twistLin}, we have a twisted $L_\infty$-algebra structure on $\oplus_{n=0}^{+\infty}\Hom(\otimes^{n+1}A, A')$ as following:
\begin{eqnarray}
\label{twist-L-1}l_1^{D}(f)&=&l_1(f)+l_2(D,f),\\
\label{twist-L-2}l_2^{D}(f,g)&=&l_2(f,g),\\
\label{twist-L-3} l^D_k&=&0,\,\,\,\,k\ge3,
\end{eqnarray}
where $f\in\Hom(\otimes^mA, A'), g\in\Hom(\otimes^nA, A')$.

\begin{thm}\label{thm:MC-twist-L}
Let $D:A\longrightarrow A'$ be a right deformation map of a quasi-twilled associative algebra $(\huaA, A, A')$ and $D':A\longrightarrow A'$ a linear map. Then $D+D'$ is a right deformation map of $(\huaA, A, A')$ if and only if $D'$ is a Maurer-Cartan element of the twisted $L_\infty$-algebra $(\oplus_{n=0}^{+\infty}\Hom(\otimes^{n+1}A, A'), l^D_1, l^D_2)$.
\end{thm}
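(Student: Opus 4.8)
The plan is to transport the whole statement into the language of Maurer--Cartan elements and then invoke the general twisting principle already recorded in Theorem~\ref{twistLin}. By Theorem~\ref{curved-L}, a linear map $A\longrightarrow A'$ is a right deformation map of $(\huaA,A,A')$ precisely when it is a Maurer--Cartan element of the curved $L_\infty$-algebra $(\oplus_{n=0}^{+\infty}\Hom(\otimes^{n+1}A,A'),l_0,l_1,l_2)$; since $l_k=0$ for $k\ge 3$ there, this Maurer--Cartan condition reads $l_0+l_1(x)+\tfrac12 l_2(x,x)=0$. Thus the claim amounts to showing that the curved Maurer--Cartan equation for $D+D'$ coincides with the honest Maurer--Cartan equation for $D'$ in the twisted algebra $(\oplus_{n=0}^{+\infty}\Hom(\otimes^{n+1}A,A'),l^D_1,l^D_2)$.

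First I would write out the curved Maurer--Cartan equation for $D+D'$ and expand it using the multilinearity of $l_1$ and $l_2$:
\begin{equation*}
l_0+l_1(D+D')+\tfrac12 l_2(D+D',D+D')=0.
\end{equation*}
Both $D$ and $D'$ are maps $A\longrightarrow A'$, hence lie in $\Hom(\otimes^1 A,A')$ and are of degree $0$, so graded symmetry of $l_2$ gives $l_2(D,D')=l_2(D',D)$ with trivial Koszul sign. Collecting terms, the left-hand side splits into three groups:
\begin{equation*}
\Big(l_0+l_1(D)+\tfrac12 l_2(D,D)\Big)+\Big(l_1(D')+l_2(D,D')\Big)+\tfrac12 l_2(D',D').
\end{equation*}

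Next I would dispatch each group. The first parenthesis vanishes because $D$ is a right deformation map, hence a Maurer--Cartan element, by Theorem~\ref{curved-L}. By the definition \eqref{twist-L-1} of the twisted unary bracket, the second parenthesis equals $l_1^{D}(D')$, and by \eqref{twist-L-2} the last term equals $\tfrac12 l_2^{D}(D',D')$. Since the twisting is performed at the Maurer--Cartan element $D$, Theorem~\ref{twistLin} guarantees that $(\oplus_{n=0}^{+\infty}\Hom(\otimes^{n+1}A,A'),\{l_k^{D}\}_{k\ge 1})$ is an honest $L_\infty$-algebra (in particular it carries no curvature term), with $l_k^{D}=0$ for $k\ge 3$ by \eqref{twist-L-3}. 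Hence the entire equation reduces to $l_1^{D}(D')+\tfrac12 l_2^{D}(D',D')=0$, which is exactly the Maurer--Cartan equation of $D'$ in the twisted $L_\infty$-algebra. Reading the chain of equalities in both directions yields the desired equivalence.

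The only genuine bookkeeping is the middle step: one must be sure the cross terms $l_2(D,D')$ and $l_2(D',D)$ truly coincide, so that the two halves combine into a single $l_2(D,D')$ rather than leaving a stray factor, which is where graded symmetry together with the degree-$0$ placement of $D$ and $D'$ enters; and one must confirm that the curved Maurer--Cartan series truncates after the quadratic term, which follows from $l_k=0$ for $k\ge 3$. No computation with the explicit formulas for the brackets is required, because all the structural identities are already packaged in Theorems~\ref{curved-L} and~\ref{twistLin}.
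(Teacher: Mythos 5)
Your proposal is correct and follows essentially the same route as the paper's own proof: expand the curved Maurer--Cartan equation for $D+D'$, cancel the block $l_0+l_1(D)+\tfrac12 l_2(D,D)$ using that $D$ is a Maurer--Cartan element, and identify the remaining terms with $l_1^D(D')+\tfrac12 l_2^D(D',D')$ via \eqref{twist-L-1} and \eqref{twist-L-2}. Your handling of the cross terms by graded symmetry and the remark that the chain of equalities is reversible merely make explicit what the paper compresses into ``the converse part can be proved similarly.''
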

\begin{proof}
Let $D+D'$ be a right deformation map of $(\huaA, A, A')$, by Theorem \ref{curved-L}, we have
\begin{eqnarray*}
0&=&l_0+l_1(D+D')+\frac{1}{2}l_2(D+D',D+D')\\
&=&l_1(D')+l_2(D,D')+\frac{1}{2}l_2(D',D').
\end{eqnarray*}
which implies that
$$
l_1^{D}(D')+\frac{1}{2}l_2^{D}(D',D')=0,
$$
Thus, $D'$ is a Maurer-Cartan element of the twisted $L_\infty$-algebra $(\oplus_{n=0}^{+\infty}\Hom(\otimes^{n+1}A, A'), l^D_1, l^D_2)$.

The converse part can be proved similarly. We omit details. The proof is finished.
\end{proof}
\subsection{Cohomologies of right deformation maps of quasi-twilled associative algebras}
\
\newline
\indent
In this subsection, we introduce the cohomology of right deformation maps of quasi-twilled associative algebras. More precisely, we give the cohomology of a modified Rota-Baxter operator of weight $\lambda$, a derivation, a crossed homomorphism and an associative algebra homomorphism.

\begin{lem}
Let $D:A\longrightarrow A'$ be a right deformation maps of a quasi-twilled associative algebra $(\huaA,A,A')$. Then
$(A';\rho^D,\mu^D)$ is a representation of the associative algebra $(A,\pi^D)$, where $\pi^D,\rho^D,\mu^D$ are given by \eqref{def-1}, \eqref{def-2}, \eqref{def-3} respectively.
\end{lem}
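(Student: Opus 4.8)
The plan is to read off the representation axioms from the structure equations of the twisted multiplication. By Proposition \ref{pro:twisting-ass} the twisting $\Omega^D$ is again an associative product and $((\huaA,\Omega^D),A,A')$ is a quasi-twilled associative algebra; hence $[\Omega^D,\Omega^D]_G=0$, and its components $\pi^D,\xi^D,\eta^D,\beta^D,\rho^D,\mu^D,\theta^D$ satisfy the whole system \eqref{associative-structure} with every entry replaced by its $D$-twisted analogue. The decisive input is that $D$ is a right deformation map, so Theorem \ref{twisting-ass-deformation-1} forces $\theta^D=0$; this is the simplification I would substitute into that system.

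With $\theta^D=0$, the three members of \eqref{associative-structure} that couple $\rho$ and $\mu$ to $\pi$ collapse to
\begin{align*}
(\rho^D\circ \pi^D)_1+\tfrac{1}{2}[\rho^D,\rho^D]_G&=0,\\
-(\mu^D\circ \pi^D)_2+\tfrac{1}{2}[\mu^D,\mu^D]_G&=0,\\
[\rho^D,\mu^D]_G&=0,
\end{align*}
and I claim these are precisely the three defining axioms of a representation of $(A,\pi^D)$ on $A'$. First I would evaluate the first identity on a triple $(x,y,u)$ with $x,y\in A$ and $u\in A'$, obtaining $\rho^D(\pi^D(x,y))(u)=\rho^D(x)(\rho^D(y)(u))$, that is $\rho^D(\pi^D(x,y))=\rho^D(x)\circ\rho^D(y)$. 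Evaluating the second on $(u,x,y)$ yields $\mu^D(\pi^D(x,y))=\mu^D(y)\circ\mu^D(x)$, and evaluating the third on $(x,u,y)$ gives the commutation relation $\rho^D(x)\circ\mu^D(y)=\mu^D(y)\circ\rho^D(x)$.

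The only genuine work lies in this translation: for each identity I would unwind the Gerstenhaber brackets and circle products of the lifts $\widehat{\rho^D},\widehat{\mu^D},\widehat{\pi^D}$, using that each lift is supported in a single bidegree (for instance $\rho^D$ is nonzero only on $A\otimes A'$), so that all but one insertion term vanish and the surviving term carries exactly the Koszul sign needed to reproduce the operator identity above. The main obstacle is therefore purely this sign bookkeeping, which is routine. Alternatively, one can sidestep the computation entirely: since $\theta^D=0$, Theorem \ref{twisting-ass-deformation-1} already exhibits $(A,\pi^D)$ and $(A',\beta)$ as a matched pair, and the requirement that $(A';\rho^D,\mu^D)$ be a representation of $(A,\pi^D)$ is part of the matched-pair data of Example \ref{matched-pair}.
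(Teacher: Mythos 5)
Your proposal is correct and takes essentially the same route as the paper's proof: both use $\theta^D=0$ (from $D$ being a right deformation map) together with $[\Omega^D,\Omega^D]_G=0$ to specialize the structure equations \eqref{associative-structure} to the three identities $(\rho^D\circ\pi^D)_1+\tfrac{1}{2}[\rho^D,\rho^D]_G=0$, $-(\mu^D\circ\pi^D)_2+\tfrac{1}{2}[\mu^D,\mu^D]_G=0$, $[\rho^D,\mu^D]_G=0$, and then evaluate these on mixed triples to read off the three representation axioms.
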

\begin{proof}
Since $D$ is a right deformation map of $(\huaA,A,A')$, we have $\theta^D=0$, where $\theta^D$ is given by \eqref{def-7}. By Theorem \ref{twisting-ass-deformation-1}, we obtain that $(A,\pi^D)$ is an associative algebra.  By Proposition \ref{pro:twisting-ass}, $[\Omega^D,\Omega^D]=0$. Thus, by \eqref{associative-structure}, we have
\begin{eqnarray}
\label{twisting-cohomology-1}(\rho^D\circ\pi^D)_1+\frac{1}{2}[\rho^D,\rho^D]_G&=&0,\\
\label{twisting-cohomology-2}-(\mu^D\circ\pi^D)_2+\frac{1}{2}[\mu^D,\mu^D]_G&=&0,\\
\label{twisting-cohomology-3}[\rho^D,\mu^D]_G&=&0.
\end{eqnarray}
By \eqref{twisting-cohomology-1}, for all $x,y,z\in A, u,v,w\in A'$, we have
\begin{eqnarray*}
0&=&(\rho^D\circ\pi^D)_1((x,u),(y,v),(z,w))+\frac{1}{2}[\rho^D,\rho^D]_G((x,u),(y,v),(z,w))\\
&=&\rho^D(\pi^D((x,u),(y,v)),(z,w))+\rho^D(\rho^D((x,u),(y,v)),(z,w))-\rho^D((x,u),\rho^D((y,v),(z,w)))\\
&=&\rho^D(\pi^D(x,y),w)-\rho^D(x,\rho^D(y,w))\\
&=&\rho^D(\pi^D(x,y))(w)-\rho^D(x)\rho^D(y)(w),
\end{eqnarray*}
which implies that
\begin{equation}\label{rep-1}
\rho^D(\pi^D(x,y))=\rho^D(x)\circ\rho^D(y).
\end{equation}
Similarly, by \eqref{twisting-cohomology-2} and \eqref{twisting-cohomology-3}, we have
\begin{eqnarray}
\label{rep-2}\mu^D(\pi^D(x,y))&=&\mu^D(y)\circ\mu^D(x),\\
\label{rep-3}\rho^D(x)\circ\mu^D(y)&=&\mu^D(y)\circ\rho^D(x).
\end{eqnarray}
Thus, by \eqref{rep-1},\eqref{rep-2} and \eqref{rep-3}, we obtain that $(A';\rho^D,\mu^D)$ is a representation of $(A,\pi^D)$.
\end{proof}

Let $(A';\rho^D,\mu^D)$ be a representation of an associative algebra $(A,\pi^D)$. Define the set of $0$-cochains $C^{0}(D)$ to be $A'$. For $n\geq 1$, define the set of $n$-cochains $C^{n}(D)$ by $C^{n}(D)=\Hom(\otimes^nA,A')$. The coboundary operator $\dM^D:C^n(D)\longrightarrow C^{n+1}(D)$ is defined by
\begin{eqnarray*}
\dM^D f(x_1,\dots,x_{n+1})&=&\rho^D(x_1)f(x_2,\dots,x_{n+1})+\sum_{i=1}^n(-1)^if(x_1,\dots,x_{i-1},\pi^D(x_i, x_{i+1}),x_{i+2},\dots,x_{n+1})\\
&&+(-1)^{n+1}\mu^D(x_{n+1})f(x_1,\dots,x_n)\\
&=&\rho(x_1,f(x_2,\dots,x_{n+1}))+\beta(D(x_1),f(x_2,\dots,x_{n+1}))-D(\xi(x_1,f(x_2,\dots,x_{n+1})))\\
&&+\sum_{i=1}^n(-1)^if(x_1,\dots,x_{i-1},\pi(x_i, x_{i+1}),x_{i+2},\dots,x_{n+1})\\
&&+\sum_{i=1}^n(-1)^if(x_1,\dots,x_{i-1},\eta(D(x_i),x_{i+1}),x_{i+2},\dots,x_{n+1})\\
&&+\sum_{i=1}^n(-1)^if(x_1,\dots,x_{i-1},\xi(x_i, D(x_{i+1})),x_{i+2},\dots,x_{n+1})\\
&&+(-1)^{n+1}\mu(f(x_1,\dots,x_n),x_{n+1})+(-1)^{n+1}\beta(f(x_1,\dots,x_n),D(x_{n+1}))\\
&&-(-1)^{n+1}D(\eta(f(x_1,\dots,x_n),x_{n+1})),
\end{eqnarray*}
for all $f\in \Hom(\otimes^nA,A')$ and $x_1,\dots,x_{n+1}\in A$.
\begin{defi}
The cohomology of the cochain complex $(\oplus_{i=0}^{+\infty}C^i(D),\dM^D)$ is called the {\bf cohomology of right deformation maps} of the quasi-twilled associative algebra $(\huaA,A,A')$.
 \end{defi}
Denote the set of closed $n$-cochains by $Z^n(D)$ and the set of exact $n$-cochains by $B^n(D)$. We denote by $H^n(D)$=$Z^n(D)$/$B^n(D)$ the corresponding cohomology group.

\begin{pro}\label{cohomology-deformation-map}
Let $D:A\longrightarrow A'$ be a right deformation map of a quasi-twilled associative algebra $(\huaA,A,A')$ and $(\oplus_{n=0}^{+\infty}\Hom(\otimes^{n+1}A, A'), l^D_1, l^D_2)$ a twisted $L_\infty$-algebra. Then for all $f\in \Hom(\otimes^mA, A')$, we have
$$l_1^D(f)=(-1)^{m-1}\dM^{D}f.$$
\end{pro}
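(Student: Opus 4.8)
The plan is to identify the twisted differential $l_1^D$ with the Gerstenhaber bracket against the twisted multiplication $\Omega^D$, projected onto $F$, and then to recognize this projection as the coboundary $\dM^D$ up to the sign $(-1)^{m-1}$. First I would rewrite $l_1^D$ as a single derived bracket. Since $l_k=0$ for $k\ge 3$ by Theorem~\ref{curved-L}, the twisting formula \eqref{twist-L-1} reduces to the finite sum $l_1^D(f)=l_1(f)+l_2(D,f)$. Moreover the derived-bracket description of Theorem~\ref{cV} gives $l_{n+1}(\underbrace{D,\dots,D}_{n},f)=P([X_D^{n}(\Omega),f]_G)$; summing over $n$ and using linearity of the bracket together with $\sum_{n\ge 0}\frac{1}{n!}X_D^{n}(\Omega)=e^{X_D}(\Omega)=\Omega^D$ (the definition of twisting, Proposition~\ref{pro:twisting-ass}), I obtain
\[
l_1^D(f)=\sum_{n\ge 0}\frac{1}{n!}l_{n+1}(\underbrace{D,\dots,D}_{n},f)=P\big([e^{X_D}(\Omega),f]_G\big)=P([\Omega^D,f]_G).
\]

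Next I would compute $P([\Omega^D,f]_G)$ by a bidegree analysis. For $f\in\Hom(\otimes^mA,A')$ the Gerstenhaber convention gives $[\Omega^D,f]_G=\Omega^D\circ f-(-1)^{m-1}f\circ\Omega^D$. Since $P$ projects onto $F=\oplus_n\Hom(\otimes^nA,A')$, that is, onto maps with all inputs in $A$ and output in $A'$, and $f$ has output in $A'$, only three components of $\Omega^D=\pi^D+\xi^D+\eta^D+\beta^D+\rho^D+\mu^D+\theta^D$ can survive: inserting $f$ into the first (resp.\ second) slot of $\Omega^D$ retains only $\mu^D$ (resp.\ $\rho^D$), since $\beta^D$ would demand a second input from $A'$ and $\theta^D,\xi^D,\eta^D$ have the wrong type, while inserting $\Omega^D$ into $f$ retains only $\pi^D$, whose output lies in $A$. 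Hence $P([\Omega^D,f]_G)$ is built solely from $\pi^D,\rho^D,\mu^D$, which is exactly the data entering $\dM^D$; note that $\dM^D$ is available precisely because $D$ is a right deformation map, so $\theta^D=0$ and $(A';\rho^D,\mu^D)$ is a representation of $(A,\pi^D)$.

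Finally I would match the signs. The insertions $i=1$ and $i=2$ in $\Omega^D\circ f$ yield $\mu^D(f(x_1,\dots,x_m),x_{m+1})$ with coefficient $+1$ and $(-1)^{m-1}\rho^D(x_1,f(x_2,\dots,x_{m+1}))$, while $-(-1)^{m-1}f\circ\Omega^D$ contributes $(-1)^{m-1}(-1)^{i}f(x_1,\dots,\pi^D(x_i,x_{i+1}),\dots,x_{m+1})$ for $1\le i\le m$. Comparing term by term with the defining formula for $\dM^D f$ taken at $n=m$, in which the $\mu^D$-term carries the sign $(-1)^{m+1}$, every term acquires the single global factor $(-1)^{m-1}$, using $(-1)^{m-1}(-1)^{m+1}=1$ on the $\mu^D$-term; this yields $l_1^D(f)=(-1)^{m-1}\dM^D f$. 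The main obstacle is exactly this sign bookkeeping: one must check that the Gerstenhaber signs $(-1)^{(i-1)(n-1)}$ in the circle product and $(-1)^{(m-1)(n-1)}$ in the bracket combine with the Hochschild signs $(-1)^i$ and $(-1)^{n+1}$ of $\dM^D$ into one uniform prefactor rather than term-dependent signs. The underlying type analysis is forced by the bidegree decomposition and is otherwise routine.
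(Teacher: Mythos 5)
Your proof is correct, and it takes a route that is recognizably parallel to, but not identical with, the paper's. The paper stays entirely at the finite level: from \eqref{twist-L-1} and Theorem \ref{curved-L} it writes $l_1^D(f)=[\pi+\rho+\mu+[\xi+\eta+\beta,D]_G,f]_G$, evaluates the degree-one element $\pi+\rho+\mu+[\xi+\eta+\beta,D]_G$ on pairs $((x,u),(y,v))$, and recognizes the result as $\pi^D+\rho^D+\mu^D$ via the formulas \eqref{def-1}, \eqref{def-2}, \eqref{def-3} of Theorem \ref{twisting-ass-deformation-1}; the final equality $[\pi^D+\rho^D+\mu^D,f]_G=(-1)^{m-1}\dM^D f$ is then stated without detailed sign bookkeeping. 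You instead resum the derived brackets into the exponential, obtaining $l_1^D(f)=P([e^{X_D}(\Omega),f]_G)=P([\Omega^D,f]_G)$ (legitimate, since the terms with $n\ge 2$ vanish after projection because $F$ is abelian), and then extract the surviving components of $\Omega^D$ by bidegree. Both routes converge on the same intermediate fact, that $l_1^D(f)$ is the bracket of $f$ with $\pi^D+\rho^D+\mu^D$. What your route buys: the twisted structure maps enter for free as components of $\Omega^D$, so you need not recompute them, and you supply the explicit term-by-term sign verification (all three ratios of coefficients equal $(-1)^{m-1}$, using $(-1)^{m-1}(-1)^{m+1}=1$ on the $\mu^D$-term) that the paper leaves implicit. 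What the paper's route buys: it uses only the two-term sum $l_1(f)+l_2(D,f)$ and never appeals to the exponential identity or the (finite) convergence of $e^{X_D}(\Omega)$. One phrasing caveat: your claim that $\xi^D,\eta^D$ are discarded because they ``have the wrong type'' is slightly loose --- they do accept an $A'$-entry, and the insertions $\xi^D(x,f(\dots))$, $\eta^D(f(\dots),x)$ are nonzero maps; they are discarded because their output lies in $A$, hence they are killed by the projection $P$. This is exactly what your bidegree analysis establishes, so it is a matter of wording, not a gap.
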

\begin{proof}
 By \eqref{twist-L-1} and Theorem \ref{curved-L}, we have
 \begin{eqnarray*}
l_1^D(f)&=&l_1(f)+l_2(D, f)\\
&=& [\pi+\rho+\mu, f]_G+[[\xi+\eta+\beta, D]_G, f]_G\\
&=&[\pi+\rho+\mu+[\xi+\eta+\beta, D]_G, f]_G.
\end{eqnarray*}
For all $x,y\in A, u,v\in A'$, by Theorem \ref{twisting-ass-deformation-1}, we have
 \begin{eqnarray*}
&&(\pi+\rho+\mu+[\xi+\eta+\beta, D]_G)((x,u),(y,v))\\
&=&(\pi(x,y),0)+(0,\rho(x,v))+(0,\mu(u,y))+(\xi+\eta+\beta)((0,D(x)),(y,v))\\
&&+(\xi+\eta+\beta)((x,u),(0,D(y)))-D((\xi+\eta+\beta)((x,u),(y,v)))\\
&=&(\pi(x,y),0)+(0,\rho(x,v))+(0,\mu(u,y))+(\eta(D(x),y),0)+(0,\beta(D(x),v))\\
&&+(\xi(x,D(y)),0)+(0,\beta(u,D(y)))-(0,D(\xi(x,v)))-(0,D(\eta(u,y)))\\
&=&(\pi^D(x,y),\rho^D(x,v)+\mu^D(u,y)).
\end{eqnarray*}
Thus, we have
\begin{equation*}
 l_1^D(f)=[\pi^D+\rho^D+\mu^D, f]_G= (-1)^{m-1}\dM^D(f).
\end{equation*}
This finishes the proof.
 \end{proof}

 \begin{ex}
Consider the quasi-twilled associative algebra $(A\oplus_MA,A,A)$ given in Example \ref{direct-sum}. Let $D:A\longrightarrow A$ be a modified Rota-Baxter operator of weight $\lambda$ on the associative algebra $(A,\cdot_{A})$. Then $(A,\pi^D)$ is an associative algebra, where $\pi^D$ is given by
\begin{equation*}
\pi^D(x,y)=D(x)\cdot_{A}y+x\cdot_{A} D(y),\quad
 \forall x,y\in A.
\end{equation*}
Moreover, $(A;\rho^D,\mu^D)$ is a representation of $(A,\pi^D)$, where $\rho^D$ and $\mu^D$ are given by
\begin{eqnarray*}
\rho^D(x)(y)&=&D(x)\cdot_{A}y-D(x\cdot_{A}y),\\
\mu^D(x)(y)&=&y\cdot_{A}D(x)-D(y\cdot_{A}x),
\end{eqnarray*}
for all $x,y \in A$.

The corresponding cohomology is taken to be the {\bf cohomology of a modified Rota-Baxter operator of weight $\lambda$} on the associative algebra $(A,\cdot_{A})$. See \cite{Das1} for more details.
\end{ex}
\begin{ex}
Consider the quasi-twilled associative algebra $(A\ltimes_{(\rho,\mu)}V,A,V)$ given in Example \ref{semi-direct product}. Let $D$ be a derivation from $(A,\cdot_{A})$ to $V$. Then $(A,\pi^D)$ is an associative algebra, where $\pi^D$ is given by
\begin{equation*}
\pi^D(x,y)=x\cdot_{A}y,\quad
 \forall x,y\in A.
\end{equation*}
Moreover, $(V;\rho^D,\mu^D)$ is a representation of $(A,\pi^D)$, where $\rho^D$ and $\mu^D$ are given by
\begin{eqnarray*}
\rho^D(x)(u)&=&\rho(x)(u),\\
\mu^D(x)(u)&=&\mu(x)(u),
\end{eqnarray*}
for all $x \in A, u\in V$.

The corresponding cohomology is taken to be the {\bf cohomology of a derivation} from $(A,\cdot_{A})$ to $V$. See \cite{Guo} for more details.
\end{ex}

\begin{ex}
Consider the quasi-twilled associative algebra $(A\ltimes_{(\rho,\mu)}A',A,A')$ given in Example \ref{semi-direct product-1}. Let $D$ be a crossed homomorphism from the associative algebra $(A,\cdot_{A})$ to the associative algebra $(A',\cdot_{A'})$. Then $(A,\pi^D)$ is an associative algebra, where $\pi^D$ is given by
\begin{equation*}
\pi^D(x,y)=x\cdot_{A}y,\quad
 \forall x,y\in A.
\end{equation*}
Moreover, $(A';\rho^D,\mu^D)$ is an associative representation of $(A,\pi^D)$, where $\rho^D$ and $\mu^D$ are given by
\begin{eqnarray*}
\rho^D(x)(u)&=&\rho(x)(u)+D(x)\cdot_{A'}u,\\
\mu^D(x)(u)&=&\mu(x)(u)+u\cdot_{A'}D(x),
\end{eqnarray*}
for all $x \in A, u\in A'$.

The corresponding cohomology is taken to be the {\bf cohomology of a crossed homomorphism} from the associative algebra $(A,\cdot_{A})$ to the associative algebra $(A',\cdot_{A'})$. See \cite{Das3} for more details.
\end{ex}

\begin{ex}
Consider the quasi-twilled associative algebra $(A\oplus A',A,A')$ given in Example \ref{direct product}. Let $D$ be an associative algebra homomorphism from $(A,\cdot_{A})$ to $(A',\cdot_{A'})$.
Then $(A,\pi^D)$ is an associative algebra, where $\pi^D$ is given by
\begin{equation*}
\pi^D(x,y)=x\cdot_{A}y,\quad
 \forall x,y\in A.
\end{equation*}
Moreover, $(A';\rho^D,\mu^D)$ is a representation of $(A,\pi^D)$, where $\rho^D$ and $\mu^D$ are given by
\begin{eqnarray*}
\rho^D(x)(u)&=&D(x)\cdot_{A'}u,\\
\mu^D(x)(u)&=&u\cdot_{A'}D(x),
\end{eqnarray*}
for all $x \in A, u\in A'$.

The corresponding cohomology is taken to be the {\bf cohomology of an associative algebra homomorphism} from $(A,\cdot_{A})$ to $(A',\cdot_{A'})$. See \cite{Barmeier,BD,Fregier,Gerstenhaber3} more details.
\end{ex}

\section{The controlling algebra and cohomology of left deformation maps of quasi-twilled associative algebras}\label{sec:deformation map-2}

\subsection{Left deformation maps of quasi-twilled associative algebras}
\
\newline
\indent
In this subsection, we introduce the notion of a left deformation map of a quasi-twilled associative algebra and give some examples, such as a relative Rota-Baxter operator of weight 0 on an associative algebra, a twisted Rota-Baxter operator on an associative algebra, a Reynolds operator on an associative algebra and a deformation map of a matched pair of associative algebras.
\begin{defi}
Let $(\huaA,A,A')$ be a quasi-twilled associative algebra. A {\bf left deformation map} of $(\huaA,A,A')$ is a linear map $B:A'\longrightarrow A$ such that for all $u,v\in A'$,
\begin{equation}
\pi(B(u),B(v))+\xi(B(u),v)+\eta(u,B(v))=B(\beta(u,v)+\rho(B(u),v)+\mu(u,B(v))+\theta(B(u),B(v))).
\end{equation}
\end{defi}
These two types of deformation maps have the following relation:
\begin{pro}
Let $(\huaA,A,A')$ be a quasi-twilled associative algebra and $D:A\longrightarrow A'$ an invertible linear map. Then $D$ is a right deformation map of $(\huaA,A,A')$ if and only if $D^{-1}:A'\longrightarrow A$ is a left deformation map of $(\huaA,A,A')$.
\end{pro}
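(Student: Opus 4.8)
The plan is to prove the equivalence directly, by a change of variables that exploits the invertibility of $D$; the two defining equations turn out to be transforms of one another under the substitution $u=D(x),\,v=D(y)$ together with an application of $D^{\pm1}$. Write $B=D^{-1}:A'\longrightarrow A$, so that $D\circ B=\Id_{A'}$ and $B\circ D=\Id_A$.

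First I would start from the right deformation map condition \eqref{deformation-map}: $D$ is a right deformation map if and only if, for all $x,y\in A$,
\[
D\bigl(\pi(x,y)+\xi(x,D(y))+\eta(D(x),y)\bigr)=\rho(x,D(y))+\mu(D(x),y)+\beta(D(x),D(y))+\theta(x,y).
\]
Substituting $x=B(u)$ and $y=B(v)$ and using $D(B(u))=u$, $D(B(v))=v$, the right-hand side becomes $\rho(B(u),v)+\mu(u,B(v))+\beta(u,v)+\theta(B(u),B(v))$, and the argument of $D$ on the left becomes $\pi(B(u),B(v))+\xi(B(u),v)+\eta(u,B(v))$. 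Applying $B=D^{-1}$ to both sides and using $B\circ D=\Id_A$ then produces exactly the defining equation of a left deformation map for $B$. Running the same computation in reverse --- apply $D$, then substitute $u=D(x),\,v=D(y)$ --- recovers \eqref{deformation-map} from the left deformation map equation.

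The one point requiring care is the handling of the universal quantifiers: I must observe that, since $D:A\to A'$ is bijective, the map $(x,y)\mapsto(D(x),D(y))$ is a bijection of $A\times A$ onto $A'\times A'$. Consequently the statement ``\eqref{deformation-map} holds for all $x,y\in A$'' is genuinely equivalent to ``the left deformation map equation holds for all $u,v\in A'$'', and both implications of the proposition follow at once from the single chain of manipulations above. This is essentially the only obstacle, and it is mild.

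For a more conceptual alternative I would note the graph interpretation: as subsets of $\huaA$ one has $\mathrm{Gr}(D)=\{(x,D(x)):x\in A\}=\{(B(u),u):u\in A'\}$, the latter being the natural graph of $B$. By the graph characterization already established, $D$ is a right deformation map iff $\mathrm{Gr}(D)$ is a subalgebra of $\huaA$; an identical computation with $\Omega$ shows that $B$ is a left deformation map iff $\{(B(u),u):u\in A'\}$ is a subalgebra of $\huaA$. Since the two subsets coincide, the equivalence is immediate. I expect to present the direct substitution as the main argument, since the graph characterization for left deformation maps has not yet been recorded in the text.
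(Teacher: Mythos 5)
Your proof is correct and is precisely the ``straightforward computation'' that the paper's own proof alludes to in a single line: substituting $x=B(u)$, $y=B(v)$ into \eqref{deformation-map}, applying $D^{\mp 1}$ to both sides, and noting that bijectivity of $D$ makes the two universal quantifications equivalent. Your graph-theoretic alternative is also valid (and the observation that $\{(B(u),u):u\in A'\}$ being a subalgebra characterizes left deformation maps holds even without invertibility), but the substitution argument is exactly the paper's intended route.
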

\begin{proof}
It follows from straightforward computations.
\end{proof}

\begin{ex}(relative Rota-Baxter operator of weight 0)
Consider the quasi-twilled associative algebra $(A\ltimes_{(\rho,\mu)}V,A,V)$ given in Example \ref{semi-direct product}. In this case, a left deformation map of $(A\ltimes_{(\rho,\mu)}V,A,V)$ is a linear map $B:V\longrightarrow A$ such that
\begin{equation}
B(u)\cdot_{A}B(v)=B(\rho(B(u))v+\mu(B(v))u),\quad
 \forall u,v\in V,
\end{equation}
which implies that $B$ is a {\bf relative Rota-Baxter operator of weight 0} (also called $\huaO$-operator) on the associative algebra $(A,\cdot_{A})$ with respect to the representation $(V;\rho,\mu)$. See \cite{Bai1,Bai2,Bai3,Das2,Das4,Uchino} for more details.
\end{ex}

\begin{ex}(twisted Rota-Baxter operator)
Consider the quasi-twilled associative algebra $(A\ltimes_{(\rho,\mu,\omega)}V,A,V)$ given in Example \ref{abelian-extension}. In this case, a left deformation map of $(A\ltimes_{(\rho,\mu,\omega)}V,A,V)$ is a linear map $B:V\longrightarrow A$ such that
\begin{equation}
B(u)\cdot_{A}B(v)=B(\rho(B(u))v+\mu(B(v))u+\omega(B(u),B(v))),\quad
 \forall u,v\in V,
\end{equation}
which implies that $B$ is a {\bf twisted Rota-Baxter operator} on the associative algebra $(A,\cdot_{A})$ with respect to the representation $(V;\rho,\mu)$. See \cite{Das} for more details.
\end{ex}
\begin{ex}(Reynolds operator)
Consider the quasi-twilled associative algebra $(A\ltimes_{(L,R,\omega)}A,A,A)$ given in Example \ref{ex:Reynolds}. In this case, a left deformation map of $(A\ltimes_{(L,R,\omega)}A,A,A)$ is a linear map $B:A\longrightarrow A$ such that
\begin{equation}
B(u)\cdot_{A}B(v)=B(B(u)\cdot_{A}v+u\cdot_{A}B(v)+B(u)\cdot_{A}B(v)),\quad
 \forall u,v\in A,
\end{equation}
which implies that $B$ is a {\bf Reynolds operator} on the associative algebra $(A,\cdot_{A})$. See \cite{Das} for more details.
\end{ex}

\begin{ex}(deformation map)
Consider the quasi-twilled associative algebra $(A\bowtie A',A,A')$ given in Example \ref{matched-pair}. In this case, a left deformation map of $(A\bowtie A',A,A')$ is a linear map $B:A'\longrightarrow A$ such that
\begin{equation}
B(u)\cdot_{A}B(v)+\xi(v)B(u)+\eta(u)B(v)=B(u\cdot_{A'} v+\rho(B(u))v+\mu(B(v))u),\quad
 \forall u,v\in A',
\end{equation}
which implies that $B$ is a {\bf deformation map} of a matched pair $(A,A')$ of associative algebras. See \cite{Agore} for more details.
\end{ex}

\begin{defi}{\rm(\cite{Uchino})}
Let $B:A'\longrightarrow A$ be a linear map. The transformation $\Omega^B\triangleq e^{X_B}(\Omega)$ is called a {\bf twisting} of $\Omega$ by $B$, where $X_B=[\cdot,B]_G$, $X_B^2:=[[\cdot,B]_G,B]_G$ and $X_B^n$ is defined by the same manner.
\end{defi}
\begin{pro}{\rm(\cite{Uchino})}
With the above notation,
\begin{equation*}
\Omega^B=e^{-B}\circ \Omega\circ (e^B\otimes e^B)
\end{equation*}
is an associative algebra structure on $\huaA$ and $e^B$ is an associative algebra isomorphism from $(\huaA,\Omega^B)$ to $(\huaA,\Omega)$.
\end{pro}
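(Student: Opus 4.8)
The plan is to run the argument of Proposition~\ref{pro:twisting-ass} in the mirror situation, replacing the degree-zero lift of $D:A\longrightarrow A'$ by that of $B:A'\longrightarrow A$. Everything follows from a single identity: the exponential $e^{X_B}$ of the adjoint action $X_B=[\cdot,B]_G$ on the Gerstenhaber graded Lie algebra $(\oplus_{n}\Hom(\otimes^{n+1}\huaA,\huaA),[\cdot,\cdot]_G)$ coincides with conjugation by the operator $e^{B}$ on $\huaA=A\oplus A'$. Observe first that the lift $\hat B\in\Hom(\huaA,\huaA)$, sending $(x,u)$ to $(B(u),0)$, satisfies $\hat B^2=0$, so $e^{\pm B}=\Id\pm\hat B$ and every exponential appearing below is a finite sum.

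First I would establish the conjugation formula for an arbitrary cochain $f\in\Hom(\otimes^m\huaA,\huaA)$. Since $B$ has degree $0$, all Koszul signs in the Gerstenhaber bracket are trivial and $X_B(f)=[f,B]_G=f\circ\hat B-\hat B\circ f$. Write $\huaR(f)=f\circ\hat B$ for the (signless) insertion of $\hat B$ into each input slot and $\huaL(f)=\hat B\circ f$ for post-composition. These two operators on cochains commute, because one acts on the inputs and the other on the output, so $e^{X_B}=e^{\huaR-\huaL}=e^{-\huaL}e^{\huaR}$. A slot-by-slot computation then gives $e^{\huaR}(f)=f\circ(e^{B}\otimes\cdots\otimes e^{B})$ and $e^{-\huaL}(f)=e^{-B}\circ f$, whence
\begin{equation*}
\Omega^B=e^{X_B}(\Omega)=e^{-B}\circ\Omega\circ(e^B\otimes e^B),
\end{equation*}
which is the first assertion.

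I would then read off the two remaining claims. Because $\hat B$ has degree $0$, the operator $\ad_{\hat B}=X_B$ is a degree-preserving derivation of $(\oplus_{n}\Hom(\otimes^{n+1}\huaA,\huaA),[\cdot,\cdot]_G)$, and it is locally nilpotent (on an $m$-ary cochain one has $\huaR^{m+1}=0$ and $\huaL^2=0$), so $e^{X_B}$ is a well-defined automorphism of this graded Lie algebra. As $(\huaA,\Omega)$ is associative, $[\Omega,\Omega]_G=0$, and therefore
\begin{equation*}
[\Omega^B,\Omega^B]_G=[e^{X_B}\Omega,e^{X_B}\Omega]_G=e^{X_B}[\Omega,\Omega]_G=0,
\end{equation*}
so $\Omega^B$ is again an associative structure on $\huaA$. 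Finally, rewriting the conjugation formula as $e^{B}\circ\Omega^B=\Omega\circ(e^B\otimes e^B)$ says exactly that $e^{B}(\Omega^B(a,b))=\Omega(e^Ba,e^Bb)$ for all $a,b\in\huaA$; that is, $e^{B}$ is an algebra homomorphism from $(\huaA,\Omega^B)$ to $(\huaA,\Omega)$, and since it is invertible with inverse $e^{-B}$ it is an isomorphism.

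The only delicate point is the conjugation formula itself, where one must apply the Gerstenhaber sign conventions and the combinatorial identity $e^{\huaR}(f)=f(e^{B}\,\cdot\,,\dots,e^{B}\,\cdot\,)$ correctly. Here the situation is benign: $B$ lives in degree $0$, so every Koszul sign is trivial, and $\hat B^2=0$ truncates $e^{\huaR}$ after one application of $\hat B$ per slot. This is precisely the computation carried out for $\Omega^D$ in Proposition~\ref{pro:twisting-ass} (the result itself being due to \cite{Uchino}), so nothing new occurs beyond interchanging the roles of $A$ and $A'$.
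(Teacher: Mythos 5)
Your proposal is correct. Note, however, that the paper contains no proof of this proposition at all: it is imported verbatim from Uchino's work (as is its mirror statement, Proposition~\ref{pro:twisting-ass}), so there is no in-paper argument to compare against, and your appeal to ``the computation carried out for $\Omega^D$'' points at a citation, not a computation. What you have written is therefore a self-contained verification that fills this gap, and it checks out: since the lift $\hat B$ has arity $1$ and degree $0$, the bracket $[f,B]_G=f\circ\hat B-\hat B\circ f$ carries no Koszul signs; the insertion operator $\huaR$ and the post-composition operator $\huaL$ commute; $\hat B^2=0$ gives $e^{\pm B}=\Id\pm\hat B$, local nilpotency of $X_B$ (via $\huaR^{m+1}=0$ on $m$-ary cochains and $\huaL^2=0$), and the slot-by-slot identity $e^{\huaR}(f)=f\circ(e^{B}\otimes\cdots\otimes e^{B})$, whence $\Omega^B=e^{-B}\circ\Omega\circ(e^B\otimes e^B)$. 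The remaining two claims then follow exactly as you say: $e^{X_B}$ is an automorphism of the Gerstenhaber graded Lie algebra (exponential of a locally nilpotent degree-zero derivation), so $[\Omega^B,\Omega^B]_G=e^{X_B}[\Omega,\Omega]_G=0$, which by Gerstenhaber's Maurer--Cartan characterization makes $\Omega^B$ associative; and the rewritten identity $e^B\circ\Omega^B=\Omega\circ(e^B\otimes e^B)$ exhibits the invertible map $e^B$ as an isomorphism $(\huaA,\Omega^B)\to(\huaA,\Omega)$. This is essentially Uchino's original argument, so the proof is the expected one, but you should present it as supplying details the paper omits rather than as mirroring a proof the paper gives.
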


Let $(\huaA,A,A')$ be a quasi-twilled associative algebra and $B:A'\longrightarrow A$ a linear map. Write
\begin{equation*}
\Omega^B=\pi^B+\xi^B+\eta^B+\gamma^B+\beta^B+\rho^B+\mu^B+\theta^B,
\end{equation*}
where $\pi^B \in \Hom(A\otimes A,A)$, $\xi^B \in \Hom(A\otimes A',A)$, $\eta^B \in \Hom(A'\otimes A,A)$, $\gamma^B \in \Hom(A'\otimes A',A)$, $\beta^B \in \Hom(A'\otimes A',A')$, $\rho^B \in \Hom(A\otimes A',A')$, $\mu^B \in \Hom(A'\otimes A,A')$, $\theta^B \in \Hom(A\otimes A,A')$. Then we have the following result:
\begin{thm}\label{twisting-ass-deformation-2}
With the above notation, for all $x,y\in A, u,v\in A'$, we have
\begin{eqnarray}
\label{deformation2-1}\pi^B(x,y)&=&\pi(x,y)-B(\theta(x,y)),\\
\label{deformation2-4}\xi^B(x,u)&=&\xi(x,u)+\pi(x,B(u))-B(\rho(x,u))-B(\theta(x,B(u))),\\
\label{deformation2-5}\eta^B(u,x)&=&\eta(u,x)+\pi(B(u),x)-B(\mu(u,x))-B(\theta(B(u),x)),\\
\label{deformation2-8}\gamma^B(u,v)&=&\xi(B(u),v)+\eta(u,B(v))-B(\beta(u,v))+\pi(B(u),B(v))\\
\nonumber&&-B(\rho(B(u),v))-B(\mu(u,B(v)))-B(\theta(B(u),B(v))),\\
\label{deformation2-6}\beta^B(u,v)&=&\beta(u,v)+\rho(B(u),v)+\mu(u,B(v))+\theta(B(u),B(v)),\\
\label{deformation2-2}\rho^B(x,u)&=&\rho(x,u)+\theta(x,B(u)),\\
\label{deformation2-}\mu^B(u,x)&=&\mu(u,x)+\theta(B(u),x),\\
\label{deformation2-7}\theta^B(x,y)&=&\theta(x,y).
\end{eqnarray}
Furthermore, $((\huaA,\Omega^B),A,A')$ is a quasi-twilled associative algebra if and only if $B:A'\longrightarrow A$ is a left deformation map of the quasi-twilled associative algebra $(\huaA, A, A')$.
\end{thm}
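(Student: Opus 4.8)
The plan is to compute the twisted multiplication $\Omega^B = e^{-B}\circ\Omega\circ(e^B\otimes e^B)$ componentwise and read off the conditions under which $((\huaA,\Omega^B),A,A')$ is again quasi-twilled. First I would establish the explicit formulas \eqref{deformation2-1}--\eqref{deformation2-7} by the same bookkeeping used in the proof of Theorem \ref{twisting-ass-deformation-1}: since $B:A'\lon A$ has degree $0$ and raises the ``$A$-weight'' of a tensor slot, applying $X_B=[\,\cdot\,,B]_G$ repeatedly to each of the seven pieces $\pi,\xi,\eta,\beta,\rho,\mu,\theta$ produces only finitely many nonzero terms. The key structural observation is that, unlike the right-deformation case, twisting by $B$ can create a genuinely new component $\gamma^B\in\Hom(A'\otimes A',A)$, namely a map from $A'\otimes A'$ \emph{into} $A$; this is precisely the component that must vanish for $(\huaA,\Omega^B)$ to be quasi-twilled, because being quasi-twilled means $A'$ is a subalgebra, i.e.\ the product of two elements of $A'$ has no $A$-component.

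The heart of the argument is then the equivalence in the final sentence. I would argue that $((\huaA,\Omega^B),A,A')$ is quasi-twilled if and only if $A'$ is a subalgebra for the product $\Omega^B$, which by the decomposition $\Omega^B=\pi^B+\xi^B+\eta^B+\gamma^B+\beta^B+\rho^B+\mu^B+\theta^B$ is equivalent to $\gamma^B=0$, the vanishing of the unique component landing in $A$ from inputs in $A'\otimes A'$. Reading off \eqref{deformation2-8}, the condition $\gamma^B(u,v)=0$ for all $u,v\in A'$ is exactly
\begin{equation*}
\pi(B(u),B(v))+\xi(B(u),v)+\eta(u,B(v))=B\big(\beta(u,v)+\rho(B(u),v)+\mu(u,B(v))+\theta(B(u),B(v))\big),
\end{equation*}
which is precisely the defining equation of a left deformation map $B$. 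So the equivalence reduces to the identity $\gamma^B=0 \iff B \text{ is a left deformation map}$.

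Two points deserve care. First, I must confirm that the remaining components of $\Omega^B$ land in the correct target spaces so that, once $\gamma^B=0$, the decomposition genuinely exhibits $((\huaA,\Omega^B),A,A')$ as quasi-twilled in the sense of the definition: $\pi^B,\xi^B,\eta^B$ map into $A$, and $\beta^B,\rho^B,\mu^B,\theta^B$ map into $A'$, with no stray map from $A'\otimes A'$ to $A$ once $\gamma^B$ is killed. This is immediate from the target-space annotations preceding the theorem. Second, since Proposition \ref{pro:twisting-ass} (in its left version stated just above) already guarantees that $\Omega^B$ is an associative multiplication on $\huaA$, I do not need to re-verify associativity; the only obstruction to being quasi-twilled is the subalgebra condition on $A'$, hence $\gamma^B=0$. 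I expect the main obstacle to be the bookkeeping in deriving \eqref{deformation2-8}: one must track all the ways $X_B$ can act — turning inputs in $A'$ into $A$ via $e^B$ and post-composing with $e^{-B}$ — and collect exactly the seven terms above without sign or multiplicity errors, since $\gamma^B$ receives contributions from $\pi,\xi,\eta,\beta,\rho,\mu$ and $\theta$ all at once. Once that formula is correct, the equivalence itself is a direct comparison and the proof closes.
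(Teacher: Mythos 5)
Your proposal matches the paper's proof: the paper likewise obtains \eqref{deformation2-1}--\eqref{deformation2-8} by direct computation parallel to Theorem \ref{twisting-ass-deformation-1}, and then observes that $((\huaA,\Omega^B),A,A')$ is quasi-twilled precisely when the component $\gamma^B\in\Hom(A'\otimes A',A)$ vanishes, which by \eqref{deformation2-8} is exactly the left deformation map condition. Your additional remarks (associativity of $\Omega^B$ comes for free from the twisting proposition, and the subalgebra condition on $A'$ is the only obstruction) are exactly the implicit content of the paper's argument.
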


\begin{proof}
Similar to Theorem \ref{twisting-ass-deformation-1}, by direct computation, we can obtain \eqref{deformation2-1}-\eqref{deformation2-8}. Furthermore,  $((\huaA,\Omega^B),A,A')$ is a quasi-twilled associative algebra if and only if $\gamma^B=0$, i.e. $B$ is a left deformation map of the quasi-twilled associative algebra $(\huaA, A, A')$.
\end{proof}
\subsection{Controlling algebras of left deformation maps of quasi-twilled associative algebras}

\
\newline
\indent
In this subsection, given a quasi-twilled associative algebra, we obtain a curved $L_\infty$-algebra. Then, we give the controlling algebra of left deformation maps of quasi-twilled associative algebras, which is a curved $L_\infty$-algebra. An important byproduct is the controlling algebra of deformation maps of a matched pair of associative algebras, which is totally unknown before.
\begin{thm}\label{curved-L-1}
Let $(\huaA, A, A')$ be a quasi-twilled associative algebra. Then there is a $V$-data $(L,F,P,\Delta)$ as follows:
\begin{itemize}
\item[$\bullet$] the graded Lie algebra $(L,[\cdot,\cdot])$ is given by $(\oplus_{n=0}^{+\infty}\Hom(\otimes^{n+1}(A\oplus A'), A\oplus A'), [\cdot, \cdot]_G)$,
\item[$\bullet$] the abelian graded Lie subalgebra $F$ is given by $\oplus_{n=0}^{+\infty}\Hom(\otimes^{n+1}A', A)$,
\item[$\bullet$] $P:L\lon L$ is the projection onto the subspace $F$,
\item[$\bullet$] $\Delta=\pi+\xi+\eta+\beta+\rho+\mu+\theta$.
\end{itemize}
Consequently, we obtain an $L_{\infty}$-algebra $(\oplus_{n=0}^{+\infty}\Hom(\otimes^{n+1}A', A), l_1, l_2,l_3)$, where $l_1,l_2,l_3$ are given by
\begin{eqnarray*}
l_1(f)&=&[\xi+\eta+\beta,f]_G\\
l_2(f_1,f_2)&=&[[\pi+\rho+\mu, f_1,]_G,f_2]_G\\
l_3(f_1,f_2,f_3)&=&[[[\theta, f_1]_G, f_2]_G,f_3]_G,
\end{eqnarray*}
for all $f\in \Hom(\otimes^mA',A)$, $f_1\in \Hom(\otimes^{m_1}A',A)$, $f_2\in \Hom(\otimes^{m_2}A',A)$ and $f_3\in \Hom(\otimes^{m_3}A',A)$.

Furthermore, a linear map $B:A'\longrightarrow A$ is a left deformation map of $(\huaA, A, A')$ if and only if $B$ is a Maurer-Cartan element of the above $L_{\infty}$-algebra.
\end{thm}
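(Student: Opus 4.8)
The plan is to mirror the structure of the proof of Theorem \ref{curved-L}, which handled the right-deformation case via exactly the same Voronov machinery. First I would verify that the quadruple $(L,F,P,\Delta)$ is genuinely a $V$-data rather than merely a curved $V$-data. The graded Lie algebra $(L,[\cdot,\cdot]_G)$ is the full Gerstenhaber bracket algebra on $A\oplus A'$, and $F=\oplus_{n}\Hom(\otimes^{n+1}A',A)$ is abelian because the composition $f\circ g$ of two maps landing in $A$ with inputs only in $A'$ never feeds an $A$-valued output back into an $A'$-slot, so it vanishes identically; hence $[f,g]_G=0$ on $F$. That $P$ is a projection onto $F$ with $\ker(P)$ a graded Lie subalgebra follows from the bigrading of $\Hom(\otimes^n(A\oplus A'),A\oplus A')$ by the number of $A'$-inputs and the target, together with $[\Delta,\Delta]_G=0$, which is just the associativity of $\Omega$ proved in \eqref{associative-structure}. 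The key point distinguishing this from Theorem \ref{curved-L} is that $\Delta$ lies in $\ker(P)$: every summand $\pi,\xi,\eta,\beta,\rho,\mu,\theta$ of $\Omega$ has at least one $A$-valued output slot paired incorrectly or has an $A$-input, so none of them lies in $F$. This is why we obtain an honest $V$-data and therefore $l_0=P(\Delta)=0$, giving an $L_\infty$-algebra rather than a curved one.

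Next I would apply Theorem \ref{cV} to read off the brackets $l_k(f_1,\dots,f_k)=P([\cdots[[\Delta,f_1]_G,f_2]_G,\cdots,f_k]_G)$ and show they truncate. Decomposing $\Delta=(\pi+\rho+\mu)+(\xi+\eta+\beta)+\theta$ by the number of $A'$-inputs (one, zero, and "two fewer than needed" respectively, after accounting for outputs), I would track how each bracketing with an $f_i\in\Hom(\otimes^{m_i}A',A)$ shifts the bigrading and check which iterated brackets survive projection to $F$. Because $F$ is abelian, the argument that $l_k=0$ for $k\geq 4$ proceeds exactly as the vanishing of $l_k$ for $k\geq 3$ in Theorem \ref{curved-L}: once enough $f_i$ have been bracketed in, the result either lands outside $F$ or is killed. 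The explicit forms $l_1(f)=[\xi+\eta+\beta,f]_G$, $l_2(f_1,f_2)=[[\pi+\rho+\mu,f_1]_G,f_2]_G$, and $l_3=[[[\theta,f_1]_G,f_2]_G,f_3]_G$ then emerge by matching each piece of $\Delta$ against the number of brackets it can absorb before its output is forced into an $A'$-slot, which is precisely the content captured in Theorem \ref{twisting-ass-deformation-2} for the twisted structure.

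Finally I would establish the Maurer-Cartan characterization. Since $l_0=0$, the Maurer-Cartan equation for a degree $0$ element $B\in\Hom(A',A)$ reads $l_1(B)+\tfrac{1}{2}l_2(B,B)+\tfrac{1}{6}l_3(B,B,B)=0$. I would compute the left-hand side on a pair $(u,v)\in A'\otimes A'$ directly, exactly as in the final display of the proof of Theorem \ref{curved-L}: the term $l_1(B)$ contributes $\xi(B(u),v)+\eta(u,B(v))-B(\beta(u,v))$, the term $\tfrac{1}{2}l_2(B,B)$ contributes $\pi(B(u),B(v))-B(\rho(B(u),v))-B(\mu(u,B(v)))$, and $\tfrac{1}{6}l_3(B,B,B)$ contributes $-B(\theta(B(u),B(v)))$; summing these recovers precisely $\gamma^B(u,v)$ from \eqref{deformation2-8}. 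Vanishing of this sum is then the defining equation of a left deformation map, so $B$ is Maurer-Cartan if and only if it is a left deformation map. The main obstacle I anticipate is purely bookkeeping: correctly computing the iterated Gerstenhaber brackets with their signs and confirming that the three combinatorial coefficients $1,\tfrac12,\tfrac16$ assemble the seven terms of $\gamma^B$ without over- or under-counting; Theorem \ref{twisting-ass-deformation-2} should serve as the independent check that the bracket formulas are correct, since the twisted component $\gamma^B$ there must coincide with the Maurer-Cartan obstruction here.
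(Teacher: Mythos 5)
Your proposal is correct and follows essentially the same route as the paper's proof: verify that $(L,F,P,\Delta)$ is an honest $V$-data (the key point being $P(\Delta)=0$, unlike the right-deformation case), apply Theorem \ref{cV} to read off $l_1,l_2,l_3$ and use abelianness of $F$ to kill $l_k$ for $k\ge 4$, then evaluate the Maurer--Cartan equation on $(u,v)\in A'\otimes A'$ and match it with $\gamma^B$ from \eqref{deformation2-8}. Your bigrading bookkeeping and term-by-term attribution of $\xi+\eta+\beta$, $\pi+\rho+\mu$, and $\theta$ to $l_1$, $\tfrac12 l_2$, and $\tfrac16 l_3$ agree with the paper's direct computation.
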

\begin{proof}
It is obvious  that $F$ is an abelian graded Lie subalgebra of $L$. Since $P$ is the projection onto $F$, we have $P^2=P$. Obviously, $[\Delta,\Delta]_G=0$ and $P(\Delta)=0$. Thus, $(L,F,P,\Delta)$ is a $V$-data. By Theorem \ref{cV}, for all $f\in \Hom(\otimes^mA',A)$, $f_1\in \Hom(\otimes^{m_1}A',A)$, $f_2\in \Hom(\otimes^{m_2}A',A)$ and $f_3\in \Hom(\otimes^{m_3}A',A)$, by direct computation, we have
\begin{eqnarray*}
l_1(f)&=&P([\pi+\xi+\eta+\beta+\rho+\mu+\theta, f]_G)=[\xi+\eta+\beta, f]_G,\\
l_2(f_1,f_2)&=&P([[\pi+\xi+\eta+\beta+\rho+\mu+\theta, f_1]_G,f_2]_G)=[[\pi+\rho+\mu, f_1]_G,f_2]_G,\\
l_3(f_1,f_2,f_3)&=&P([[[\pi+\xi+\eta+\beta+\rho+\mu+\theta, f_1]_G, f_2]_G,f_3]_G)=[[[\theta, f_1]_G, f_2]_G,f_3]_G.
\end{eqnarray*}
By direct computation, we also have
$$
[[[\pi+\xi+\eta+\beta+\rho+\mu+\theta, f_1]_G, f_2]_G,f_3]_G\in\Hom(\otimes^{m_1+m_2+m_3-1}A', A),
$$
Since $F$ is abelian, we have $l_k=0$ for all $k\geq 4$.
Moreover, for all $u,v\in A'$, we have
\begin{eqnarray*}
  &&l_1(B)(u,v)+\frac{1}{2}l_2(B, B)(u,v)+\frac{1}{6}l_3(B,B,B)(u, v)\\
  &=&[\xi+\eta+\beta, B]_G(u,v)+\frac{1}{2}[[\pi+\rho+\mu, B]_G,B]_G(u,v)+\frac{1}{6}[[[\theta, B]_G, B]_G,B]_G(u,v)\\
  &=&\xi(B(u),v)+\eta(u,B(v))-B(\beta(u,v))+\pi(B(u),B(v))\\
  &&-B(\rho(B(u),v))-B(\mu(u,B(v)))-B(\theta(B(u),B(v))).
\end{eqnarray*}
Thus, $B$ is a left deformation map of $(\huaA, A, A')$ if and only if $B$ is a Maurer-Cartan element of the $L_\infty$-algebra $(\oplus_{n=0}^{+\infty}\Hom(\otimes^{n+1}A', A), l_1, l_2,l_3)$.
\end{proof}
\begin{cor}
Consider the quasi-twilled associative algebra $(A\ltimes_{(\rho,\mu)}V,A,V)$ given in Example \ref{semi-direct product}. Then we obtain a graded Lie algebra $(s(\oplus_{n=0}^{+\infty}\Hom(\otimes^{n+1}V, A)),\Courant{\cdot,\cdot})$, where the graded Lie bracket $\Courant{\cdot,\cdot}$ is given by
\begin{eqnarray}
\label{control-semi-product}&&\Courant{f_1,f_2}(u_1,\dots,u_{m_1+m_2})\\
\nonumber &=&(-1)^{m_1-1}l_2(f_1,f_2)(u_1,\dots,u_{m_1+m_2})\\
\nonumber&=&f_2(u_1,\dots,u_{m_2})\cdot_{A}f_1(u_{m_2+1},\dots,u_{m_1+m_2})-(-1)^{m_1m_2}f_1(u_1,\dots,u_{m_1})\cdot_{A}f_2(u_{m_1+1},\dots,u_{m_1+m_2})\\
\nonumber&&+\sum_{i=1}^{m_1}(-1)^{im_2}f_1(u_1,\dots,u_{i-1},\mu(u_i,f_2(u_{i+1},\dots,u_{i+m_2})),u_{i+m_2+1},\dots,u_{m_1+m_2})\\
\nonumber&&-\sum_{i=1}^{m_1}(-1)^{(i-1)m_2}f_1(u_1,\dots,u_{i-1},\rho(f_2(u_i,\dots,u_{i+m_2-1}),u_{i+m_2}),u_{i+m_2+1},\dots,u_{m_1+m_2})\\
\nonumber&&+(-1)^{m_1m_2}\sum_{i=1}^{m_2}(-1)^{(i-1)m_1}f_2(u_1,\dots,u_{i-1},\rho(f_1(u_i,\dots,u_{i+m_1-1}),u_{i+m_1}),u_{i+m_1+1},\dots,u_{m_1+m_2})\\
\nonumber&&-(-1)^{m_1(m_2+1)}\sum_{i=1}^{m_2}(-1)^{(i-1)m_1}f_2(u_1,\dots,u_{i-1},\mu(u_i,f_1(u_{i+1},\dots,u_{i+m_1})),u_{i+m_1+1},\dots,u_{m_1+m_2}),
\end{eqnarray}
for all $f_1\in\Hom(\otimes^{m_1}V, A), f_2\in\Hom(\otimes^{m_2}V, A)$ and $u_1,\dots,u_{m_1+m_2}\in  V$.

Moreover, Maurer-Cartan elements of the graded Lie algebra $(s(\oplus_{n=0}^{+\infty}\Hom(\otimes^{n+1}V, A)),\Courant{\cdot,\cdot})$ are exactly relative Rota-Baxter operators of weight $0$ on the associative algebra $(A,\cdot_{A})$ with respect to the representation $(V;\rho,\mu)$.  Thus, this graded Lie algebra can be viewed as the {\bf controlling algebra of relative Rota-Baxter operators of weight 0} on the associative algebra $(A,\cdot_{A})$ with respect to the representation $(V;\rho,\mu)$. See \cite{Das2,Das4} for more details.
\end{cor}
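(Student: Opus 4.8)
The plan is to obtain this corollary as a direct specialization of Theorem \ref{curved-L-1} to the semi-direct product. First I would read off the components of the multiplication $\Omega$ on $A\ltimes_{(\rho,\mu)}V$ from Example \ref{semi-direct product}. Comparing $(x,u)\cdot_{(\rho,\mu)}(y,v)=(x\cdot_{A}y,\rho(x)v+\mu(y)u)$ with the general decomposition $\Omega=\pi+\xi+\eta+\beta+\rho+\mu+\theta$, one finds that $\pi(x,y)=x\cdot_{A}y$, that $\rho,\mu$ are the representation maps, and crucially that $\xi=\eta=\beta=\theta=0$: there is no term from $A'\otimes A'$ or $A\otimes A'$ or $A'\otimes A$ landing in $A$, no multiplication on $V$, and no $A\otimes A\to V$ curvature term. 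Feeding these vanishings into the formulas of Theorem \ref{curved-L-1} collapses the structure maps, since $l_1(f)=[\xi+\eta+\beta,f]_G=0$ and $l_3(f_1,f_2,f_3)=[[[\theta,f_1]_G,f_2]_G,f_3]_G=0$, leaving only $l_2(f_1,f_2)=[[\pi+\rho+\mu,f_1]_G,f_2]_G$.

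An $L_\infty$-algebra carrying a single nontrivial bracket $l_2$ is exactly a graded Lie algebra after the degrees are shifted by the suspension $s$, the décalage sign being the factor $(-1)^{m_1-1}$ recorded in \eqref{control-semi-product}; with $l_1=0$ there is no differential, so the structure is a plain graded Lie algebra as claimed. The graded antisymmetry and graded Jacobi identity for $\Courant{\cdot,\cdot}$ then follow automatically from the graded symmetry and the $n=3$ instance of the generalized Jacobi identity already satisfied by $l_2$, so no separate verification of the Lie axioms is needed.

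The remaining, and main, task is to confirm the explicit formula \eqref{control-semi-product}. For this I would expand the double Gerstenhaber bracket $[[\pi+\rho+\mu,f_1]_G,f_2]_G$ using the definition of the circle product and then evaluate on arguments $u_1,\dots,u_{m_1+m_2}\in V$. Because the outputs of $f_1,f_2$ lie in $A$ while their inputs lie in $V$, the projection $P$ onto $\Hom(\otimes^\bullet V,A)$ kills every composition except those that either feed an $A$-valued output into one of the two slots of $\pi$, producing the two $\cdot_{A}$-products, or feed an $A$-valued output together with a $V$-argument into $\rho$ or $\mu$, producing the four insertion sums. Carrying the Koszul signs through these surviving terms reproduces exactly the six summands of \eqref{control-semi-product}; this sign bookkeeping is the genuinely laborious step, though it is entirely mechanical and routine.

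Finally, for the Maurer-Cartan characterization, since $l_1=l_3=0$ the defining equation of Theorem \ref{curved-L-1} for a degree $0$ element $B$ reduces to $\frac{1}{2}l_2(B,B)=0$, equivalently $\Courant{B,B}=0$. Evaluating $l_2(B,B)$ on $u,v\in V$ through \eqref{control-semi-product} (or, equivalently, specializing the Maurer-Cartan expression already computed inside the proof of Theorem \ref{curved-L-1} with $\xi=\eta=\beta=\theta=0$) yields $B(u)\cdot_{A}B(v)-B(\rho(B(u))v+\mu(B(v))u)$, so the equation is precisely the defining identity of a relative Rota-Baxter operator of weight $0$ on $(A,\cdot_{A})$ with respect to $(V;\rho,\mu)$. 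Hence the Maurer-Cartan elements of $(s(\oplus_{n=0}^{+\infty}\Hom(\otimes^{n+1}V,A)),\Courant{\cdot,\cdot})$ are exactly these operators, which completes the identification of this graded Lie algebra as their controlling algebra.
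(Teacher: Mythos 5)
Your proposal is correct and follows essentially the same route the paper takes: the corollary is obtained by specializing Theorem \ref{curved-L-1} to the semi-direct product, where $\xi=\eta=\beta=\theta=0$ forces $l_1=l_3=0$, the lone bracket $l_2$ (after the suspension/d\'ecalage sign $(-1)^{m_1-1}$) yields a graded Lie algebra, and the Maurer--Cartan equation $\tfrac{1}{2}l_2(B,B)=0$ reduces to $B(u)\cdot_A B(v)=B(\rho(B(u))v+\mu(B(v))u)$, the defining identity of a relative Rota--Baxter operator of weight $0$. Your treatment of the explicit bracket formula (expanding $[[\pi+\rho+\mu,f_1]_G,f_2]_G$ and tracking which compositions survive the projection) matches the paper's implicit computation, and the level of detail left as mechanical sign-checking is the same as in the paper itself.
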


\begin{cor}\label{control-extension}
Consider the quasi-twilled associative algebra $(A\ltimes_{(\rho,\mu,\omega)}V,A,V)$ given in Example \ref{abelian-extension}. Then $(\oplus_{n=0}^{+\infty}\Hom(\otimes^{n+1}V, A),l_1,l_2,l_3)$ is an $L_\infty$-algebra, where $l_1, l_2, l_3$ are given by $l_1=0$ and
\begin{eqnarray*}
&&l_2(f_1,f_2)(u_1,\dots,u_{m_1+m_2})\\
&=&(-1)^{m_1-1}f_2(u_1,\dots,u_{m_2})\cdot_{A}f_1(u_{m_2+1},\dots,u_{m_1+m_2})\\
&&+(-1)^{m_1(m_2-1)}f_1(u_1,\dots,u_{m_1})\cdot_{A}f_2(u_{m_1+1},\dots,u_{m_1+m_2})\\
&&-(-1)^{m_1}\sum_{i=1}^{m_1}(-1)^{im_2}f_1(u_1,\dots,u_{i-1},\mu(u_i,f_2(u_{i+1},\dots,u_{i+m_2})),u_{i+m_2+1},\dots,u_{m_1+m_2})\\
&&+(-1)^{m_1}\sum_{i=1}^{m_1}(-1)^{(i-1)m_2}f_1(u_1,\dots,u_{i-1},\rho(f_2(u_i,\dots,u_{i+m_2-1}),u_{i+m_2}),u_{i+m_2+1},\dots,u_{m_1+m_2})\\
&&-(-1)^{m_1(m_2-1)}\sum_{i=1}^{m_2}(-1)^{(i-1)m_1}f_2(u_1,\dots,u_{i-1},\rho(f_1(u_i,\dots,u_{i+m_1-1}),u_{i+m_1}),u_{i+m_1+1},\dots,u_{m_1+m_2})\\
&&+(-1)^{m_1m_2}\sum_{i=1}^{m_2}(-1)^{(i-1)m_1}f_2(u_1,\dots,u_{i-1},\mu(u_i,f_1(u_{i+1},\dots,u_{i+m_1})),u_{i+m_1+1},\dots,u_{m_1+m_2}),
\end{eqnarray*}
for all $f_1\in\Hom(\otimes^{m_1}V, A), f_2\in\Hom(\otimes^{m_2}V, A)$ and $u_1,\dots,u_{m_1+m_2}\in  V$.
\begin{eqnarray*}
&&l_3(f_1,f_2,f_3)(u_1,\dots,u_{m_1+m_2+m_3-1})\\
&=&(-1)^{m_1m_2m_3}\big(\sum_{i=1}^{m_1}(-1)^{(i-1)m_2}f_1(u_1,\dots,u_{i-1},\omega(f_2(u_i,\dots,u_{i+m_2-1}),f_3(u_{i+m_2},\dots,u_{i+m_2+m_3-1})),\\
&&\dots,u_{m_1+m_2+m_3-1})\\
&&-(-1)^{m_2m_3}\sum_{i=1}^{m_1}(-1)^{(i-1)m_3}f_1(u_1,\dots,u_{i-1},\omega(f_3(u_i,\dots,u_{i+m_3-1}),f_2(u_{i+m_3},\dots,u_{i+m_2+m_3-1})),\\
&&\dots,u_{m_1+m_2+m_3-1})\\
&&-(-1)^{m_1m_2}\sum_{i=1}^{m_2}(-1)^{(i-1)m_1}f_2(u_1,\dots,u_{i-1},\omega(f_1(u_i,\dots,u_{i+m_1-1}),f_3(u_{i+m_1},\dots,u_{i+m_1+m_3-1})),\\
&&\dots,u_{m_1+m_2+m_3-1})\\
&&+(-1)^{m_1(m_2+m_3)}\sum_{i=1}^{m_2}(-1)^{(i-1)m_3}f_2(u_1,\dots,u_{i-1},\omega(f_3(u_i,\dots,u_{i+m_3-1}),f_1(u_{i+m_3},\dots,u_{i+m_1+m_3-1})),\\
&&\dots,u_{m_1+m_2+m_3-1})\\
&&-(-1)^{m_1m_2+m_2m_3+m_1m_3}\sum_{i=1}^{m_3}(-1)^{(i-1)m_2}f_3(u_1,\dots,u_{i-1},\omega(f_2(u_i,\dots,u_{i+m_2-1}),f_1(u_{i+m_2},\dots,u_{i+m_1+m_2-1})),\\
&&\dots,u_{m_1+m_2+m_3-1})\\
&&+(-1)^{(m_1+m_2)m_3}\sum_{i=1}^{m_3}(-1)^{(i-1)m_1}f_3(u_1,\dots,u_{i-1},\omega(f_1(u_i,\dots,u_{i+m_1-1}),f_2(u_{i+m_1},\dots,u_{i+m_1+m_2-1})),\\
&&\dots,u_{m_1+m_2+m_3-1})\big),
\end{eqnarray*}
for all $f_1\in\Hom(\otimes^{m_1}V, A), f_2\in\Hom(\otimes^{m_2}V, A)$, $f_3\in \Hom(\otimes^{m_3}V,A)$ and $u_1,\dots,u_{m_1+m_2+m_3-1}\in  V$.

Moreover, Maurer-Cartan elements of the $L_\infty$-algebra $(\oplus_{n=0}^{+\infty}\Hom(\otimes^{n+1}V, A),l_2,l_3)$ are exactly twisted Rota-Baxter operators on the associative algebra $(A,\cdot_{A})$ with respect to the representation $(V;\rho,\mu)$.  Thus, this $L_\infty$-algebra can be viewed as the {\bf controlling algebra of twisted Rota-Baxter operators} on the associative algebra $(A,\cdot_{A})$ with respect to the representation $(V;\rho,\mu)$. See \cite{Das} for more details.
\end{cor}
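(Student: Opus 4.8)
The plan is to obtain every assertion as a specialization of Theorem \ref{curved-L-1} to the quasi-twilled associative algebra $(A\ltimes_{(\rho,\mu,\omega)}V,A,V)$ of Example \ref{abelian-extension}. First I would read off the components of $\Omega$ for this example: comparing the multiplication $(x,u)\cdot_{(\rho,\mu,\omega)}(y,v)=(x\cdot_{A}y,\rho(x)v+\mu(y)u+\omega(x,y))$ with the general decomposition $\Omega=\pi+\xi+\eta+\beta+\rho+\mu+\theta$, one finds $\pi(x,y)=x\cdot_{A}y$, $\rho(x,v)=\rho(x)v$, $\mu(u,y)=\mu(y)u$, $\theta(x,y)=\omega(x,y)$, while $\xi=\eta=\beta=0$. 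Since Theorem \ref{curved-L-1} gives $l_1=[\xi+\eta+\beta,\cdot]_G$, the vanishing of $\xi,\eta,\beta$ yields $l_1=0$ at once. Crucially, the $L_\infty$-algebra structure is inherited from Theorem \ref{curved-L-1}, so none of the $L_\infty$-axioms needs to be rechecked; only the explicit bracket formulas and the Maurer-Cartan identification remain.

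For $l_2$ I would expand $l_2(f_1,f_2)=[[\pi+\rho+\mu,f_1]_G,f_2]_G$ and project onto $\Hom(\otimes^{m_1+m_2}V,A)$. Because the $f_i$ take arguments in $V$ and values in $A$, a type analysis of the composition tree shows that a term survives exactly when either $\pi$ is the root with its two $A$-slots filled by $f_1$ and $f_2$, or some $f_i$ is the root with a $V$-slot filled by $\rho$ or $\mu$ whose lone $A$-slot is filled by the other map. This is precisely the computation already carried out for the relative Rota-Baxter case of Example \ref{semi-direct product}, since $\theta$ does not enter $l_2$ and the maps $\pi,\rho,\mu$ coincide; tracking the Koszul signs produces the displayed formula.

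The genuinely new ingredient is $l_3(f_1,f_2,f_3)=[[[\theta,f_1]_G,f_2]_G,f_3]_G$ with $\theta\in\Hom(A\otimes A,V)$, and this is the step I expect to be the main obstacle. Since $\theta$ is the only $V$-valued operation and the $f_i$ are the only $A$-valued ones, a term lands in $\Hom(\otimes^{m_1+m_2+m_3-1}V,A)$ precisely when $\theta$ occupies a $V$-slot of one map (the root) while its own two $A$-slots are filled by the remaining two maps; hence all surviving terms have the shape $f_i(\dots,\omega(f_j(\dots),f_k(\dots)),\dots)$, and running over the three choices of root together with the two orderings of the glued pair produces exactly the six sums claimed. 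Concretely, the term $f_1\circ\theta$ of the inner bracket (with $\theta$ plugged into $f_1$) yields the root-$f_1$ contributions once the two outer brackets fill $\theta$'s slots by $f_2$ and $f_3$, whereas the term $\theta\circ f_1$, being $V$-valued, is absorbed into a $V$-slot of $f_2$ or $f_3$ at the next bracket and supplies the root-$f_2$ and root-$f_3$ contributions. All other insertions are either type-incompatible (killed by the projection) or of the wrong arity. The remaining work is to assemble the accompanying Koszul signs into the stated expression; this is routine but sign-intensive.

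Finally, for the Maurer-Cartan identification I would specialize the general computation in the proof of Theorem \ref{curved-L-1} to a degree-$0$ element $B\in\Hom(V,A)$. Setting $\xi=\eta=\beta=0$ there, the expression $l_1(B)+\frac{1}{2}l_2(B,B)+\frac{1}{6}l_3(B,B,B)$ evaluated on $u,v\in V$ collapses to $\pi(B(u),B(v))-B(\rho(B(u),v))-B(\mu(u,B(v)))-B(\theta(B(u),B(v)))$, that is, $B(u)\cdot_{A}B(v)-B(\rho(B(u))v+\mu(B(v))u+\omega(B(u),B(v)))$. Its vanishing is precisely the twisted Rota-Baxter operator equation of Example \ref{abelian-extension}, so the Maurer-Cartan elements of $(\oplus_{n=0}^{+\infty}\Hom(\otimes^{n+1}V,A),l_2,l_3)$ are exactly the twisted Rota-Baxter operators, which completes the proof.
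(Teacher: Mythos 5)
Your proposal is correct and follows exactly the paper's intended argument: the corollary is a direct specialization of Theorem \ref{curved-L-1} to the quasi-twilled algebra of Example \ref{abelian-extension}, where reading off $\xi=\eta=\beta=0$ kills $l_1$, the $l_2$ formula coincides with the relative Rota--Baxter case of Example \ref{semi-direct product}, the identification $\theta=\omega$ yields the six root/ordering families in $l_3$, and the Maurer--Cartan equation collapses to the twisted Rota--Baxter identity. The paper states this corollary without a separate proof, and your type analysis of the iterated brackets together with the Maurer--Cartan computation is precisely the omitted verification.
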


\begin{cor}
Consider the quasi-twilled associative algebra $(A\ltimes_{(L,R,\omega)}A,A,A)$ given in Example \ref{ex:Reynolds}.
Parallel to Corollary \ref{control-extension}, $(\oplus_{n=0}^{+\infty}\Hom(\otimes^{n+1}A, A), l_2, l_3)$ is an $L_{\infty}$-algebra. Moreover, Maurer-Cartan elements of the $L_\infty$-algebra $(\oplus_{n=0}^{+\infty}\Hom(\otimes^{n+1}A, A),l_2,l_3)$ are exactly Reynolds operators on the associative algebra $(A,\cdot_{A})$.  Thus, this $L_\infty$-algebra can be viewed as the {\bf controlling algebra of Reynolds operators} on the associative algebra $(A,\cdot_{A})$. See \cite{Das} for more details.
\end{cor}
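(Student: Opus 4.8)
The plan is to treat this corollary as a direct specialization of Corollary \ref{control-extension}, since Example \ref{ex:Reynolds} is nothing but the abelian-extension construction of Example \ref{abelian-extension} in the particular case $V=A$, $(\rho,\mu)=(L,R)$, and $\omega(x,y)=x\cdot_{A}y$. Example \ref{ex:Reynolds} already records that this data produces a genuine quasi-twilled associative algebra; equivalently, $\omega=\cdot_{A}$ is a Hochschild $2$-cocycle for the regular representation, which follows from associativity. Consequently every construction invoked in Corollary \ref{control-extension} applies here verbatim, and the proof reduces to reading off the specialized brackets and the specialized Maurer-Cartan equation.

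First I would apply Theorem \ref{curved-L-1} to this quasi-twilled algebra, which yields an $L_\infty$-algebra on $\oplus_{n=0}^{+\infty}\Hom(\otimes^{n+1}A,A)$ with brackets $l_1(f)=[\xi+\eta+\beta,f]_G$, $l_2(f_1,f_2)=[[\pi+\rho+\mu,f_1]_G,f_2]_G$, and $l_3(f_1,f_2,f_3)=[[[\theta,f_1]_G,f_2]_G,f_3]_G$. For the present data one has $\xi=\eta=\beta=0$, $\pi=\cdot_{A}$, $\rho=L$, $\mu=R$, and $\theta=\omega=\cdot_{A}$, so $l_1=0$ while the explicit expressions for $l_2$ and $l_3$ are obtained from the displayed formulas in Corollary \ref{control-extension} simply by substituting $\rho=L$, $\mu=R$, and $\omega=\cdot_{A}$. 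This gives the claimed $L_\infty$-algebra $(\oplus_{n=0}^{+\infty}\Hom(\otimes^{n+1}A,A),l_2,l_3)$.

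Next I would identify its Maurer-Cartan elements. By the last assertion of Theorem \ref{curved-L-1}, a linear map $B:A\longrightarrow A$ is a Maurer-Cartan element precisely when $B$ is a left deformation map of $(A\ltimes_{(L,R,\omega)}A,A,A)$. Writing out the left deformation map equation with $\xi=\eta=\beta=0$ leaves $\pi(B(u),B(v))=B(\rho(B(u),v)+\mu(u,B(v))+\theta(B(u),B(v)))$, and substituting $\pi=\cdot_{A}$, $\rho=L$, $\mu=R$, $\theta=\cdot_{A}$ turns this into $B(u)\cdot_{A}B(v)=B\big(B(u)\cdot_{A}v+u\cdot_{A}B(v)+B(u)\cdot_{A}B(v)\big)$, which is exactly the Reynolds operator condition recorded in the Reynolds-operator example above. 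Hence Maurer-Cartan elements coincide with Reynolds operators, and the $L_\infty$-algebra is their controlling algebra.

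I do not expect a serious obstacle: the entire argument is a substitution of the regular representation and of $\omega=\cdot_{A}$ into the general formulas of Corollary \ref{control-extension}. The one point deserving care is that here $\omega$ coincides numerically with the multiplication $\pi=\cdot_{A}$, yet the two maps play different structural roles, with $\pi$ feeding the quadratic bracket $l_2$ and $\theta=\omega$ feeding the cubic bracket $l_3$; these roles must be kept distinct when extracting the brackets, since conflating them would misrepresent the $L_\infty$-structure.
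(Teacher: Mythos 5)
Your proposal is correct and follows essentially the same route the paper intends: the corollary is obtained by specializing Theorem \ref{curved-L-1} (equivalently, Corollary \ref{control-extension}) to the data $\pi=\theta=\cdot_{A}$, $\rho=L$, $\mu=R$, $\xi=\eta=\beta=0$ of Example \ref{ex:Reynolds}, so that $l_1=0$ and the Maurer-Cartan equation reduces to the left deformation map condition, which is precisely the Reynolds operator identity. Your closing remark about keeping the structural roles of $\pi$ (feeding $l_2$) and $\theta=\omega$ (feeding $l_3$) distinct, despite their numerical coincidence, is exactly the right point of care.
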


\begin{cor}
Consider the quasi-twilled associative algebra $(A\bowtie A',A,A')$ given in Example \ref{matched-pair}. Then we obtain a differential graded Lie algebra $(s(\oplus_{n=0}^{+\infty}\Hom(\otimes^{n+1}A', A)),\Courant{\cdot,\cdot},\dM)$, where the graded Lie bracket $\Courant{\cdot,\cdot}$ is given by \eqref{control-semi-product} and the differential $\dM$ is given by
\begin{eqnarray*}
\dM(u_1,\dots,u_{m+1})&=&\xi(f(u_1,\dots,u_m),u_{m+1})+(-1)^{m-1}\eta(u_1,f(u_2,\dots,u_{m+1}))\\
&&+(-1)^{m-1}\sum_{i=1}^m(-1)^if(u_1,\dots,u_{i-1},\beta(u_i,u_{i+1}),u_{i+2},\dots,u_{m+1})
\end{eqnarray*}
for all $f\in\Hom(\otimes^mA', A)$ and $u_1,\dots,u_{m+1}\in A'$.

Moreover, Maurer-Cartan elements of the differential graded Lie algebra $(s(\oplus_{n=0}^{+\infty}\Hom(\otimes^{n+1}A', A)),\\ \Courant{\cdot,\cdot},\dM)$ are exactly deformation maps of a matched pair $(A,A')$ of associative algebras.  Thus, this differential graded Lie algebra is called the {\bf controlling algebra of deformation maps of a matched pair $(A,A')$ of associative algebras}.
\end{cor}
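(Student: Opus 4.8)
The plan is to specialize the general machinery of Theorem \ref{curved-L-1} to the quasi-twilled associative algebra $(A\bowtie A', A, A')$ arising from the matched pair. First I would read off the structure maps from the multiplication $\cdot_\bowtie$ in Example \ref{matched-pair}: comparing with the decomposition $\Omega = \pi + \xi + \eta + \beta + \rho + \mu + \theta$, the $A$-component yields $\pi(x,y) = x\cdot_A y$, $\xi(x,v) = \xi(v)x$ and $\eta(u,y) = \eta(u)y$, while the $A'$-component yields $\beta(u,v) = u\cdot_{A'}v$, $\rho(x,v) = \rho(x)v$ and $\mu(u,y) = \mu(y)u$. The decisive observation is that there is no component in $\Hom(A\otimes A, A')$, i.e. $\theta = 0$.

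Since $l_3(f_1,f_2,f_3) = [[[\theta, f_1]_G, f_2]_G, f_3]_G$ by Theorem \ref{curved-L-1}, the vanishing of $\theta$ forces $l_3 = 0$, and all higher brackets already vanish because $F$ is abelian. Hence the $L_\infty$-algebra collapses to one carrying only $l_1$ and $l_2$, which after the degree shift by the suspension $s$ is precisely a differential graded Lie algebra $(s(\oplus_{n=0}^{+\infty}\Hom(\otimes^{n+1}A', A)), \Courant{\cdot,\cdot}, \dM)$; the Jacobi identity and compatibility of $\dM$ with $\Courant{\cdot,\cdot}$ are automatic from the $L_\infty$-relations once $l_3 = 0$.

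Next I would make $l_1$ and $l_2$ explicit. Because $l_2(f_1,f_2) = [[\pi + \rho + \mu, f_1]_G, f_2]_G$ depends only on $\pi, \rho, \mu$, which are exactly the maps appearing in the semidirect-product computation, the same calculation that produced \eqref{control-semi-product} applies verbatim and gives the stated bracket $\Courant{\cdot,\cdot}$ after absorbing the suspension sign $(-1)^{m_1-1}$. The new ingredient is $l_1(f) = [\xi + \eta + \beta, f]_G$, which vanished in the semidirect-product case because there $\xi = \eta = 0$; here I would expand the Gerstenhaber bracket of $f$ with $\xi + \eta + \beta$, the $\xi$- and $\eta$-terms contributing the boundary terms $\xi(f(u_1,\dots,u_m),u_{m+1})$ and $(-1)^{m-1}\eta(u_1,f(u_2,\dots,u_{m+1}))$, and the $\beta$-term contributing the alternating sum over insertions of $\beta(u_i,u_{i+1})$, reproducing the displayed formula for $\dM$.

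Finally, the Maurer-Cartan characterization is immediate: by Theorem \ref{curved-L-1} the Maurer-Cartan elements of this $L_\infty$-algebra are exactly the left deformation maps of $(A\bowtie A', A, A')$, and by the example identifying left deformation maps of $(A\bowtie A', A, A')$ these are precisely the deformation maps of the matched pair $(A, A')$. I expect the main obstacle to be the sign bookkeeping, namely matching the Gerstenhaber-bracket signs in $l_1$ and $l_2$ against the suspension shift so that the displayed formulas for $\dM$ and $\Courant{\cdot,\cdot}$ emerge with exactly the signs written, rather than the verification that a differential graded Lie algebra is obtained, which is forced the moment $l_3 = 0$.
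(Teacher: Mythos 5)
Your proposal is correct and follows exactly the route the paper intends for this corollary: specialize Theorem \ref{curved-L-1} to the matched-pair quasi-twilled algebra, observe $\theta=0$ kills $l_3$ so the $L_\infty$-algebra collapses to a dgla, note $l_2$ coincides with \eqref{control-semi-product} since $\pi,\rho,\mu$ have the same form as in the semi-direct product case, expand $l_1(f)=[\xi+\eta+\beta,f]_G$ to get $\dM$, and identify Maurer-Cartan elements with left deformation maps, hence with deformation maps of the matched pair. The only nitpick is that $l_1$ vanishes in the semi-direct product case because $\xi=\eta=\beta=0$ (not just $\xi=\eta=0$), but this does not affect your argument here.
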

Let $B:A'\longrightarrow A$ be a left deformation map of a quasi-twilled associative algebra $(\huaA, A, A')$. By Theorem \ref{curved-L-1}, $B$ is a  Maurer-Cartan element of the $L_{\infty}$-algebra  $(\oplus_{n=0}^{+\infty}\Hom(\oplus^{n+1}A', A),\\ l_1, l_2, l_3)$. By Theorem \ref{twistLin}, we have a twisted $L_\infty$-algebra structure on $\oplus_{n=0}^{+\infty}\Hom(\otimes^{n+1}A', A)$ as following:
\begin{eqnarray}
\label{twist-L-3}l_1^{B}(f)&=&l_1(f)+l_2(B,f)+\frac{1}{2}l_3(B,B,f),\\
\label{twist-L-4}l_2^{B}(f_1,f_2)&=&l_2(f_1,f_2)+l_3(B,f_1,f_2),\\
\label{twist-L-5}l_3^{B}(f_1,f_2,f_3)&=&l_3(f_1,f_2,f_3),\\
\label{twist-L-6}l^B_k&=&0,\,\,\,\,k\ge4,
\end{eqnarray}
where $f\in \Hom(\otimes^mA',A)$, $f_1\in \Hom(\otimes^{m_1}A',A)$, $f_2\in \Hom(\otimes^{m_2}A',A)$ and $f_3\in \Hom(\otimes^{m_3}A',A)$.

\begin{thm}
Let $B:A'\longrightarrow A$ be a left deformation map of a quasi-twilled associative algebra $(\huaA, A, A')$ and $B':A'\longrightarrow A$ a linear map. Then $B+B'$ is a left deformation map of $(\huaA, A, A')$ if and only if $B'$ is a Maurer-Cartan element of the twisted $L_\infty$-algebra $(\oplus_{n=0}^{+\infty}\Hom(\otimes^{n+1}A', A), l^B_1, l^B_2,l^B_3)$.
\end{thm}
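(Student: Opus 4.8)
The plan is to reduce everything to the Maurer--Cartan picture supplied by Theorem~\ref{curved-L-1}. By that theorem, $B+B'$ is a left deformation map of $(\huaA,A,A')$ if and only if it is a Maurer--Cartan element of the $L_\infty$-algebra $(\oplus_{n=0}^{+\infty}\Hom(\otimes^{n+1}A',A),l_1,l_2,l_3)$, that is
\[
l_1(B+B')+\frac{1}{2}l_2(B+B',B+B')+\frac{1}{6}l_3(B+B',B+B',B+B')=0.
\]
Since $B$ is itself a left deformation map, the same theorem gives $l_1(B)+\frac{1}{2}l_2(B,B)+\frac{1}{6}l_3(B,B,B)=0$. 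So the whole statement will follow once I expand the first equation, discard the terms that vanish by the second, and recognize what remains.

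The key step is a purely multilinear expansion. All arguments are degree $0$ elements, so the Koszul signs in the graded symmetry are trivial and each $l_k$ is genuinely symmetric in its entries. Expanding by multilinearity,
\begin{eqnarray*}
l_2(B+B',B+B')&=&l_2(B,B)+2l_2(B,B')+l_2(B',B'),\\
l_3(B+B',B+B',B+B')&=&l_3(B,B,B)+3l_3(B,B,B')+3l_3(B,B',B')+l_3(B',B',B').
\end{eqnarray*}
Substituting these into the Maurer--Cartan equation for $B+B'$ and subtracting the Maurer--Cartan equation for $B$ leaves
\[
l_1(B')+l_2(B,B')+\frac{1}{2}l_2(B',B')+\frac{1}{2}l_3(B,B,B')+\frac{1}{2}l_3(B,B',B')+\frac{1}{6}l_3(B',B',B')=0.
\]

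It remains to identify this as the Maurer--Cartan equation of $B'$ in the twisted $L_\infty$-algebra. Using the explicit twisted brackets \eqref{twist-L-3}--\eqref{twist-L-6}, one computes
\begin{eqnarray*}
l_1^B(B')&=&l_1(B')+l_2(B,B')+\frac{1}{2}l_3(B,B,B'),\\
\frac{1}{2}l_2^B(B',B')&=&\frac{1}{2}l_2(B',B')+\frac{1}{2}l_3(B,B',B'),\\
\frac{1}{6}l_3^B(B',B',B')&=&\frac{1}{6}l_3(B',B',B'),
\end{eqnarray*}
whose sum is exactly the displayed expression. Hence the reduced equation is precisely $l_1^B(B')+\frac{1}{2}l_2^B(B',B')+\frac{1}{6}l_3^B(B',B',B')=0$, i.e.\ $B'$ is a Maurer--Cartan element of $(\oplus_{n=0}^{+\infty}\Hom(\otimes^{n+1}A',A),l_1^B,l_2^B,l_3^B)$, and both directions follow at once since every manipulation is an equivalence. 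This mirrors the argument for the right case in Theorem~\ref{thm:MC-twist-L}, the only genuine extra bookkeeping being the cubic term $l_3$ with its binomial coefficients $1,3,3,1$; that coefficient matching is the one place to be careful, and it is in any case an instance of the general fact (Theorem~\ref{twistLin}) that twisting an $L_\infty$-algebra by a Maurer--Cartan element $B$ carries Maurer--Cartan elements $B'$ of the twist to Maurer--Cartan elements $B+B'$ of the original.
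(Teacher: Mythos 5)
Your proof is correct and follows essentially the same route as the paper: the paper proves the right-deformation-map analogue (Theorem~\ref{thm:MC-twist-L}) by exactly this expand-subtract-identify argument and then declares the left case "similar," while you have simply supplied the omitted details, namely the symmetric expansion of $l_2$ and $l_3$ with coefficients $1,2,1$ and $1,3,3,1$ and the matching against the twisted brackets \eqref{twist-L-3}--\eqref{twist-L-6}. Your coefficient bookkeeping checks out, so nothing further is needed.
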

\begin{proof}
The proof is similar to Theorem \ref{thm:MC-twist-L}, we omit details.
\end{proof}

\subsection{Cohomologies of left deformation maps of quasi-twilled associative algebras}
\
\newline
\indent
In this subsection, we introduce the cohomology of left deformation maps of quasi-twilled associative algebras, and recover the cohomology of a relative Rota-Baxter operator of weight 0, a twisted Rota-Baxter operator and a Reynolds operator. In particular, we give the cohomology of a deformation map of a matched pair of associative algebras.
\begin{lem}
Let $B:A'\longrightarrow A$ be a left deformation map of a quasi-twilled associative algebra $(\huaA,A,A')$. Then $(A',\beta^B)$ is an associative algebra
and $(A;\eta^B,\xi^B)$ is a representation of $(A',\beta^B)$, where $\xi^B,\eta^B,\beta^B$ are given by \eqref{deformation2-4},\eqref{deformation2-5},\eqref{deformation2-6} respectively.
\end{lem}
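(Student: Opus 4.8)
The plan is to mirror the proof of the preceding lemma for right deformation maps, transporting the argument through the twisting $\Omega^B$ instead of $\Omega^D$. First I would invoke Theorem~\ref{twisting-ass-deformation-2}: since $B$ is a left deformation map, the component $\gamma^B\in\Hom(A'\otimes A',A)$ vanishes, so $\Omega^B$ has exactly the shape $\Omega^B=\pi^B+\xi^B+\eta^B+\beta^B+\rho^B+\mu^B+\theta^B$ with no $A'\otimes A'\to A$ term; equivalently $((\huaA,\Omega^B),A,A')$ is again a quasi-twilled associative algebra and $A'$ remains a subalgebra. Since $e^B$ is an isomorphism onto $(\huaA,\Omega)$, the twisted $\Omega^B$ is an associative multiplication on $\huaA$, whence $[\Omega^B,\Omega^B]_G=0$. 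Because $\Omega^B$ has the same bidegree decomposition as $\Omega$, the vanishing of this bracket unpacks into the identical system \eqref{associative-structure}, now with every structure map decorated by a superscript $B$.

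The key observation is that the last four equations of \eqref{associative-structure} involve only $\xi^B,\eta^B,\beta^B$. The equation $[\beta^B,\beta^B]_G=0$ says precisely that $(A',\beta^B)$ is an associative algebra, giving the first assertion. The remaining three, namely $\tfrac12[\eta^B,\eta^B]_G+(\eta^B\circ\beta^B)_1=0$, $\tfrac12[\xi^B,\xi^B]_G-(\xi^B\circ\beta^B)_2=0$ and $[\xi^B,\eta^B]_G=0$, I would evaluate on triples $(u,v,x)$, $(x,u,v)$ and $(u,x,v)$ respectively, where $x\in A$ and $u,v\in A'$. The upshot is the three identities $\eta^B(\beta^B(u,v),x)=\eta^B(u,\eta^B(v,x))$, $\xi^B(x,\beta^B(u,v))=\xi^B(\xi^B(x,u),v)$ and $\eta^B(u,\xi^B(x,v))=\xi^B(\eta^B(u,x),v)$. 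Reading $\eta^B$ as the left action $\eta^B(u)x:=\eta^B(u,x)$ and $\xi^B$ as the right action $\xi^B(v)x:=\xi^B(x,v)$, these are exactly the three defining relations of a representation (with $\eta^B$ a homomorphism, $\xi^B$ an anti-homomorphism, and the two actions commuting), so $(A;\eta^B,\xi^B)$ is a representation of $(A',\beta^B)$.

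The only step requiring care --- and the main obstacle --- is the bidegree bookkeeping inside the Gerstenhaber circle products. Each of $\eta^B$, $\xi^B$, $\beta^B$ accepts inputs from a single fixed pair of summands ($A'\otimes A$, $A\otimes A'$, $A'\otimes A'$), so in every circle product $f\circ g=(f\circ g)_1-(f\circ g)_2$ exactly one of the two terms has inputs lying in the correct summands while the other vanishes under the lift identification. The sole content of the computation is to determine, for each of the three equations, which term survives and with what sign; once this is settled each bracket collapses to a single term and the stated identity drops out. Since this is the same mechanism already used in the right-deformation lemma, I would simply indicate the surviving terms and omit the repetitive verification.
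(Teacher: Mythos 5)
Your proposal is correct and follows exactly the paper's own route: invoke Theorem~\ref{twisting-ass-deformation-2} to conclude that $((\huaA,\Omega^B),A,A')$ is quasi-twilled (so $[\Omega^B,\Omega^B]_G=0$ decomposes as in \eqref{associative-structure} with superscripts $B$ and no $\gamma^B$-contamination), then read off associativity of $\beta^B$ from $[\beta^B,\beta^B]_G=0$ and the representation axioms from the three remaining equations in $\xi^B,\eta^B,\beta^B$. Your explicit evaluation on triples and the bidegree bookkeeping (which single term survives in each circle product) are correct and merely make explicit what the paper leaves implicit.
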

\begin{proof}
By Theorem \ref{twisting-ass-deformation-2}, $((\huaA,\Omega^B),A,A')$ is a quasi-twilled associative algebra, we have $[\Omega^B,\Omega^B]_G=0$. Thus, by \eqref{associative-structure}, we have $[\beta^B,\beta^B]_G=0$, which implies that $\beta^B$ is an associative algebra structure on $A'$. By \eqref{associative-structure}, we also have
\begin{eqnarray}
\label{twisting-cohomology-4}\frac{1}{2}[\xi^B,\xi^B]_G-(\xi^B\circ\beta^B)_2&=&0,\\
\label{twisting-cohomology-5}\frac{1}{2}[\eta^B,\eta^B]_G+(\eta^B\circ\beta^B)_1&=&0,\\
\label{twisting-cohomology-6}[\xi^B,\eta^B]_G&=&0.
\end{eqnarray}
Thus, by \eqref{twisting-cohomology-4}, \eqref{twisting-cohomology-5} and \eqref{twisting-cohomology-6}, we obtain that $(A;\eta^B,\xi^B)$ is a representation of $(A',\beta^B)$.
\end{proof}
Consider the representation  $(A;\eta^B,\xi^B)$  of the associative algebra $(A',\beta^B)$. Define the set of $0$-cochains $C^{0}(B)$ to be $A$. For $n\geq 1$, define the set of $n$-cochains $C^{n}(B)$ by $C^{n}(B)=\Hom(\otimes^nA',A)$. The coboundary operator $\dM^B:C^n(B)\longrightarrow C^{n+1}(B)$ is defined by
\begin{eqnarray*}
\dM^B f(u_1,\dots,u_{n+1})&=&\eta^B(u_1)f(u_2,\dots,x_{n+1})+\sum_{i=1}^n(-1)^if(u_1,\dots,u_{i-1},\beta^B(u_i, u_{i+1}),u_{i+2},\dots,u_{n+1})\\
&&+(-1)^{n+1}\xi^B(u_{n+1})f(u_1,\dots,u_n)\\
&=&\eta(u_1,f(u_2,\dots,u_{n+1}))+\pi(B(u_1),f(u_2,\dots,u_{n+1}))\\
&&-B(\mu(u_1,f(u_2,\dots,u_{n+1})))-B(\theta(B(u_1),f(u_2,\dots,u_{n+1})))\\
&&+\sum_{i=1}^n(-1)^if(u_1,\dots,u_{i-1},\beta(u_i, u_{i+1}),u_{i+2},\dots,u_{n+1})\\
&&+\sum_{i=1}^n(-1)^if(u_1,\dots,u_{i-1},\rho(B(u_i),u_{i+1}),u_{i+2},\dots,u_{n+1})\\
&&+\sum_{i=1}^n(-1)^if(u_1,\dots,u_{i-1},\mu(u_i, B(u_{i+1})),u_{i+2},\dots,u_{n+1})\\
&&+\sum_{i=1}^n(-1)^if(u_1,\dots,u_{i-1},\theta(B(u_i), B(u_{i+1})),u_{i+2},\dots,u_{n+1})\\
&&+(-1)^{n+1}\xi(f(u_1,\dots,u_n),u_{n+1})+(-1)^{n+1}\pi(f(u_1,\dots,u_n),B(u_{n+1}))\\
&&-(-1)^{n+1}B(\rho(f(u_1,\dots,u_n),u_{n+1}))-(-1)^{n+1}B(\theta(f(u_1,\dots,u_n),B(u_{n+1}))),
\end{eqnarray*}
for all $f\in \Hom(\otimes^nA',A)$ and $u_1,\dots,u_{n+1}\in A'$.
\begin{defi}
The cohomology of the cochain complex $(\oplus_{i=0}^{+\infty}C^i(B),\dM^B)$ is called the {\bf cohomology of left deformation maps} of the quasi-twilled associative algebra $(\huaA,A,A')$.
 \end{defi}
Denote the set of closed $n$-cochains by $Z^n(B)$ and the set of exact $n$-cochains by $B^n(B)$. We denote by $H^n(B)$=$Z^n(B)$/$B^n(B)$ the corresponding cohomology group.

\begin{pro}
Let $B:A'\longrightarrow A$ be a left deformation map of a quasi-twilled associative algebra $(\huaA,A,A')$ and $(\oplus_{n=0}^{+\infty}\Hom(\otimes^{n+1}A', A), l^B_1, l^B_2,l^B_3)$ a twisted $L_\infty$-algebra. Then for all $f\in \Hom(\otimes^mA', A)$, we have
$$l_1^B(f)=(-1)^{m-1}\dM^{B}f.$$
\end{pro}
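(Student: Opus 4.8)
The plan is to transpose the proof of Proposition \ref{cohomology-deformation-map} from the right-handed to the left-handed setting. Since $B$ is a left deformation map, Theorem \ref{curved-L-1} tells us that $B$ is a Maurer-Cartan element of the $L_\infty$-algebra $(\oplus_{n=0}^{+\infty}\Hom(\otimes^{n+1}A', A), l_1, l_2, l_3)$, so by Theorem \ref{twistLin} the twisted differential is $l_1^B(f)=l_1(f)+l_2(B,f)+\frac{1}{2}l_3(B,B,f)$. Inserting the expressions for $l_1,l_2,l_3$ from Theorem \ref{curved-L-1} and using that $[\cdot,f]_G$ is linear in its first slot, I would rewrite this as
\[
l_1^B(f)=\Big[\,\xi+\eta+\beta+[\pi+\rho+\mu, B]_G+\frac{1}{2}[[\theta, B]_G, B]_G,\ f\,\Big]_G.
\]

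The crucial step is to identify the inner argument with the twisted structure maps, that is, to prove
\[
\xi+\eta+\beta+[\pi+\rho+\mu, B]_G+\frac{1}{2}[[\theta, B]_G, B]_G=\xi^B+\eta^B+\beta^B,
\]
with $\xi^B,\eta^B,\beta^B$ given by \eqref{deformation2-4}, \eqref{deformation2-5}, \eqref{deformation2-6}. As in the right-handed computation, I would evaluate the left side on a pair $((x,u),(y,v))$, $x,y\in A$, $u,v\in A'$, and read off its $A$-valued and $A'$-valued parts. The first-order term $[\pi+\rho+\mu,B]_G$ contributes, through precomposition of $B$ into an $A$-slot and postcomposition of $B$ on an $A'$-output, the summands $\pi(x,B(v)),\pi(B(u),y),-B(\rho(x,v)),-B(\mu(u,y))$ (landing in $A$) together with $\rho(B(u),v),\mu(u,B(v))$ (landing in $A'$); the second-order term $\frac{1}{2}[[\theta,B]_G,B]_G$ contributes $-B(\theta(x,B(v))),-B(\theta(B(u),y))$ (in $A$) and $\theta(B(u),B(v))$ (in $A'$). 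Together with the explicit $\xi(x,v),\eta(u,y)$ and $\beta(u,v)$ this reassembles exactly $\xi^B(x,v)+\eta^B(u,y)$ in $A$ and $\beta^B(u,v)$ in $A'$, i.e. the right side. A necessary verification is that nothing else appears: no $(A\otimes A\to A)$ term $\pi^B$ and no $(A\otimes A'\to A')$, $(A'\otimes A\to A')$, $(A\otimes A\to A')$ terms $\rho^B,\mu^B,\theta^B$ can occur, because all of their $B$-corrections would require a \emph{first-order} bracket with $\theta$, which is absent from this combination; and the mixed $(A'\otimes A'\to A)$ part $\gamma^B$ is forced to vanish, consistently with Theorem \ref{twisting-ass-deformation-2}, since $B$ is a left deformation map.

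With the identification in hand, the final step invokes the preceding Lemma: because $B$ is a left deformation map, $(A',\beta^B)$ is an associative algebra and $(A;\eta^B,\xi^B)$ is a representation of it, so $\dM^B$ is precisely the Hochschild coboundary attached to this datum. Expanding $[\xi^B+\eta^B+\beta^B, f]_G$ by the definition of the Gerstenhaber bracket and comparing term by term with the explicit formula for $\dM^B f$, one obtains $[\xi^B+\eta^B+\beta^B, f]_G=(-1)^{m-1}\dM^B f$ for every $f\in\Hom(\otimes^mA',A)$, exactly as in Proposition \ref{cohomology-deformation-map}. Combining the three displays yields $l_1^B(f)=(-1)^{m-1}\dM^B f$.

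The main obstacle is the bidegree and sign bookkeeping in the middle display. The degree-$0$ element $B$ enters the Gerstenhaber bracket both by precomposition, replacing an $A$-entry by $B$ of an $A'$-entry, and by postcomposition, carrying an $A'$-output into $A$ through $B$, so each of the degree-two $\theta$-terms arises from two distinct routes; one must check that the Koszul signs make these routes agree and that the resulting factor of $2$ cancels the $\frac{1}{2}$, so that the $\theta(B(u),B(v))$- and $B(\theta(\cdots))$-terms emerge with coefficient $1$. This is routine but delicate, and is where I would concentrate the detailed verification.
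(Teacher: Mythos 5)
Your proposal is correct and takes essentially the same approach as the paper: the paper's own proof of this proposition consists of the single remark that it is ``similar to Proposition \ref{cohomology-deformation-map}'', and your argument---rewriting $l_1^B(f)$ as $\bigl[\xi+\eta+\beta+[\pi+\rho+\mu,B]_G+\frac{1}{2}[[\theta,B]_G,B]_G,\,f\bigr]_G$, identifying the inner element with $\xi^B+\eta^B+\beta^B$ via \eqref{deformation2-4}--\eqref{deformation2-6}, and then matching $[\xi^B+\eta^B+\beta^B,f]_G$ with $(-1)^{m-1}\dM^B f$---is exactly the left-handed transposition of that proof, with the sign bookkeeping and the cancellation of the factor $\frac{1}{2}$ against the doubled $\theta$-terms working out as you anticipated. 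The only (harmless) misattribution is your claim that the $(A'\otimes A'\to A)$-component vanishes ``since $B$ is a left deformation map'': in fact this component of your combination is identically zero for \emph{every} linear map $B$, because the bracket terms whose $(A'\otimes A'\to A)$-parts produce $\gamma^B$, namely $[\xi+\eta+\beta,B]_G$, $\frac{1}{2}[[\pi+\rho+\mu,B]_G,B]_G$ and $\frac{1}{6}[[[\theta,B]_G,B]_G,B]_G$, simply do not occur in it, so no appeal to the deformation map hypothesis is needed at that step.
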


\begin{proof}
The proof is similar to Proposition \ref{cohomology-deformation-map}, we omit details.
\end{proof}
\begin{ex}
Consider the quasi-twilled associative algebra $(A\ltimes_{(\rho,\mu)}V,A,V)$ given in Example \ref{semi-direct product}. Let $B:V\longrightarrow A$ be a relative Rota-Baxter operator of weight $0$ ($\huaO$-operator) on the associative algebra $(A,\cdot_{A})$ with respect to the representation $(V;\rho,\mu)$. Then $(V,\beta^B)$ is an associative algebra, where $\beta^B$ is given by
\begin{equation*}
\beta^B(u,v)=\rho(B(u))(v)+\mu(B(v))(u),\quad
 \forall u,v\in V.
\end{equation*}
Moreover, $(A;\eta^B,\xi^B)$ is a representation of $(V,\beta^B)$, where $\eta^B$ and $\xi^B$ are given by
\begin{eqnarray*}
\eta^B(u)(x)&=&B(u)\cdot_{A}x-B(\mu(x)(u)),\\
\xi^B(u)(x)&=&x\cdot_{A}B(u)-B(\rho(x)(u)),
\end{eqnarray*}
for all $x \in A, u\in V$.

The corresponding cohomology is taken to be the {\bf cohomology of relative Rota-Baxter operators of weight 0} on the associative $(A,\cdot_{A})$ with respect to the representation $(V;\rho,\mu)$. See \cite{Das2,Das4} for more details.
\end{ex}

\begin{ex}
Consider the quasi-twilled associative algebra $(A\ltimes_{(\rho,\mu,\omega)}V,A,V)$ given in Example \ref{abelian-extension}. Let $B:V\longrightarrow A$ be a twisted Rota-Baxter operator on the associative algebra $(A,\cdot_{A})$ with respect to the representation $(V;\rho,\mu)$. Then $(V,\beta^B)$ is an associative algebra, where $\beta^B$ is given by
\begin{equation*}
\beta^B(u,v)=\rho(B(u))(v)+\mu(B(v))(u)+\omega(B(u),B(v)),\quad
 \forall u,v\in V.
\end{equation*}
Moreover, $(A;\eta^B,\xi^B)$ is a representation of $(V,\beta^B)$, where $\eta^B$ and $\xi^B$ are given by
\begin{eqnarray*}
\eta^B(u)(x)&=&B(u)\cdot_{A}x-B(\mu(x)(u))-B(\omega(B(u),x)),\\
\xi^B(u)(x)&=&x\cdot_{A}B(u)-B(\rho(x)(u))-B(\omega(x,B(u))),
\end{eqnarray*}
for all $x \in A, u\in V$.

The corresponding cohomology is taken to be the {\bf cohomology of twisted Rota-Baxter operators} on the associative algebra $(A,\cdot_{A})$ with respect to the representation $(V;\rho,\mu)$. See \cite{Das} more details.
\end{ex}

\begin{ex}
Consider the quasi-twilled associative algebra $(A\ltimes_{(L,R,\omega)}A,A,A)$ given in Example \ref{ex:Reynolds}. Let $B:A\longrightarrow A$ be a Reynolds operator on the associative algebra $(A,\cdot_{A})$. Then $(A,\beta^B)$ is an associative algebra, where $\beta^B$ is given by
\begin{equation*}
\beta^B(x,y)=B(x)\cdot_{A}y+x\cdot_{A} B(y)+B(x)\cdot_{A}B(y),\quad
 \forall x,y\in A.
\end{equation*}
Moreover, $(A;\eta^B,\xi^B)$ is a representation of $(A,\beta^B)$, where $\eta^B$ and $\xi^B$ are given by
\begin{eqnarray*}
\eta^B(x)(y)&=&B(x)\cdot_{A}y-B(x\cdot_{A} y)-B(B(x)\cdot_{A} y),\\
\xi^B(x)(y)&=&y\cdot_{A}B(x)-B(y \cdot_{A} x)-B(y\cdot_{A}B(x)),
\end{eqnarray*}
for all $x,y\in A$.

The corresponding cohomology is taken to be the {\bf cohomology of Reynolds operators} on the associative algebra $(A,\cdot_{A})$. See \cite{Das} for more details.
\end{ex}

Consider the quasi-twilled associative algebra $(A\bowtie A',A,A')$ given in Example \ref{matched-pair}. Let $B:A'\longrightarrow A$ be a deformation map of a matched pair $(A,A')$ of associative algebras. Then $(A',\beta^B)$ is an associative algebra, where $\beta^B$ is given by
\begin{equation*}
\beta^B(u,v)=u\cdot_{A'}v+\rho(B(u))(v)+\mu(B(v))(u),\quad
 \forall u,v\in A'.
\end{equation*}
Moreover, $(A;\eta^B,\xi^B)$ is a representation of $(A',\beta^B)$, where $\eta^B$ and $\xi^B$ are given by
\begin{eqnarray*}
\eta^B(u)(x)&=&\eta(u)(x)+B(u)\cdot_{A}x-B(\mu(x)(u)),\\
\xi^B(u)(x)&=&\xi(u)(x)+x\cdot_{A}B(u)-B(\rho(x)(u)),
\end{eqnarray*}
for all $x \in A, u\in A'$.
\begin{defi}
The corresponding cohomology is taken to be the {\bf cohomology of deformation maps of a matched pair $(A,A')$ of associative algebras}.

\end{defi}

This cohomology is new and can be used to classify deformation and extension problems, and we leave it to interesting readers.

 \end{document}